\numberwithin{equation}{section}
\theoremstyle{plain} 
\newtheorem{theorem}{Theorem}[section]
\newtheorem{lemma}[theorem]{Lemma}
\newtheorem{corollary}[theorem]{Corollary}
\newtheorem{proposition}[theorem]{Proposition}
\theoremstyle{definition} 
\newtheorem{definition}[theorem]{Definition} 
\newtheorem{alg}{Algorithm}
\newtheorem{example}[theorem]{Example}
\theoremstyle{remark} 
\newtheorem{remark}[theorem]{Remark}
\begin{document}
    	\title{Characteristic polynomials and some combinatorics for finite Coxeter groups}
    	\author{ Shoumin Liu, Yuxiang Wang}
    	\date{}
    	\maketitle
    	\begin{abstract}
    		Let $W$ be a finite Coxeter group with Coxeter generating set $S=\{s_1,\ldots,s_n\}$, and $\rho$ be a complex finite dimensional representation of $W$. The characteristic polynomial of $\rho$ is defined as
    		\begin{equation*}
    			d(S,\rho)=\det[x_0I+x_1\rho(s_1)+\cdots+x_n\rho(s_n)],
    		\end{equation*}
    		where $I$ is the identity operator. In this paper, we show the existence of a combinatorics structure within $W$, and thereby prove that for any two complex finite dimensional representations $\rho_1$ and $\rho_2$ of $W$, $d(S,\rho_1)=d(S,\rho_2)$ if and only if $\rho_1 \cong \rho_2$.
    	\end{abstract}
    	
		\section{Introduction}
		\hspace{1.5em}Suppose $A_0,\ldots,A_n$ are bounded linear operators on a Hilbert space $V$. Their {\it projective joint spectrum}, introduced by R. Yang in \cite{Y}, is the set
		\begin{equation*}
			\sigma(A_1,\ldots,A_n)=\{[x_1:\cdots:x_n]\in\mathbb{C}\mathbb{P}^n:x_1A_1+\cdots+x_nA_n\ {\rm is\ not\ invertible}\}.
		\end{equation*}
		Projective spectrum has played an important role in the study of functional analysis, representation theory and spectral dynamical systems. Fruitful results have been obtained in \cite{CSZ,GY,HY,HY2,HY3,HY4}. 
		
		When V is finite dimensional, $\sigma(A_1,\ldots,A_n)$ is determined by the multivariable homogeneous characteristic polynomial
		\begin{equation*}
			f_A(x)=\det[x_1A_1+\cdots+x_nA_n].
		\end{equation*}
		
		In recent years, the spectrum $\sigma(A_1,\ldots,A_n)$ and the polynomial $f_A(x)$ were intensely investigated in representation theory of groups and Lie algebras, see \cite{CST,GLW,JL,S2}. Given a finite group $G$ with a generating set $S=\{s_1,\ldots,s_n\}$, let ${\mathcal R}(G)$ denote the set of all finite dimensional complex linear representations of $G$. For any $\rho \in {\mathcal R}(G)$, $\rho:G\rightarrow GL(V)$, the projective joint spectrum of $\rho$ is defined as
		\begin{equation*}
			D(S,\rho)=\sigma(I,\rho(s_1),\ldots,\rho(s_n)),
		\end{equation*}
		where $I$ is the identity operator on $V$. The characteristic polynomial of $\rho$ is
		\begin{equation*}
			d(S,\rho)=\det[x_0I+x_1\rho(s_1)+\cdots+x_n\rho(s_n)].
		\end{equation*}
		\par
		Let $\hat{G}=G\setminus\{1\}$, where 1 is the identity element of $G$. Let $\lambda_G$ denote the left regular representation of $G$, the polynomial $d(\hat{G},\lambda_G)$ was studied by Dedekind and Frobenius in the early research of representation theory. Let ${\it Irr}(G)$ be the set of equivalence classes of irreducible representations of $G$, and ${\it deg}\,\rho$ be the degree of $\rho$. In 1896 Frobenius proved that
		\begin{equation*}
			d(\hat{G},\lambda_G)=\prod_{\rho\in{\it Irr}(G)}(d(\hat{G},\rho))^{{\it deg}\,\rho}.
		\end{equation*}
		Besides, $d(\hat{G},\rho)$ is an irreducible polynomial for each $\rho\in{\it Irr}(G)$. The polynomial $d(\hat{G},\lambda_G)$ was later called the {\it group determinant} of $G$. For more information about it, we refer the reader to \cite{C,D,D2,D3,F,J}. Although so many good properties hold for the group determinant and the general $d(\hat{G},\rho)$, as the order of $G$ increases, the calculation and application of the polynomials become essentially inaccessible. So one tends to take a small enough generating set $S$ and expects that $D(S,\rho)$ and $d(S,\rho)$ could retain all the salient features of $D(\hat{G},\rho)$ and $d(\hat{G},\rho)$.
		\par
		Basically, given a finite group $G$ and a generating set $S$, one wonders whether the following properties hold:
		\begin{enumerate}
			\item[{\rm(a)}]
			$D(S,\lambda_G)$ determines $G$.
			\item[{\rm(b)}]
			$D(S,\rho_1)=D(S,\rho_2)$ as subschemes of $\mathbb{C}\mathbb{P}^n$ implies $\rho_1\cong\rho_2$.
			\item[{\rm(c)}]
			$D(S,\rho)$ is reduced and irreducible whenever $\rho$ is finite dimensional and irreducible.
    	\end{enumerate}
		In general, the answers are negative. Counterexamples can be found in \cite{KV,L,PS}. 
		\par 
		However, when considering $G$ as a Coxeter group and $S$ as a Coxeter generating set of $G$, some positive answer were obtained in recent studies. In \cite{CST}, authors show that, considering $D(S,\rho)$ as algebraic subschemes of $\mathbb{C}\mathbb{P}^n$, (a) holds within all Coxeter groups, (b) holds for any finite Coxeter groups of classical types. Later the paper \cite{LP} shows a generalization of (a). Suppose $G$ is taken from all Coxeter groups without infinite bonds in their Coxeter diagrams, and $D(S,\rho)$ is considered as a set, the results in \cite{LP} indicates that $D(S,\rho)$ determines $G$ for any faithful representation $\rho$ of $G$. In \cite{CST,PS2}, positive answers to some generalizations of (b) for certain infinite Coxeter groups are obtained.
		\par
		In this paper, we first present a sufficient condition for (b), which is the existence of a combinatorics structure in $G$. Then we show this structure exists for any finite Coxeter groups. Finally we show (b) holds for any finite Coxeter groups.

		It is easily seen that (b) is equivalent to the property (${\rm b}^{\prime}$): $d(S,\rho_1)=d(S,\rho_2)$ implies $\rho_1\cong\rho_2$. So we mainly use statement (${\rm b}^{\prime}$) so as to study the relation between polynomials and representations. Before we present the main theorem of this paper, let's define some notations. Let
		\begin{equation*}
			M(S)\coloneqq\{t_1t_2\cdots t_m:t_i\in S,m\in\mathbb{N}\},
		\end{equation*}
		which is a free monoid generated by alphabet $S$. Let a word $w\in M(S)$, the {\it signature} of $w$ is ${\rm sig}(w)\coloneqq(a_1,\ldots,a_n)\in\mathbb{N}^n$ where $a_i$ is the number of times that $s_i$ appears in $w$. There is a natural homomorphism 
		\begin{equation*}
			\phi:M(S)\rightarrow G
		\end{equation*}
		where $\phi(s_i)=s_i$ for each $i$.	We use $[n]$ to denote the set $\{1,\ldots,n\}$. 
		\par 
		As the main result of this paper, we prove the following theorem.
		\begin{theorem}\label{MT}
			Let $W$ be a finite Coxeter group with $S=\{s_1,\ldots,s_n\}$ as its Coxeter generating set.
			\begin{enumerate}
				\item[{\rm(a)}]
				There is an order of all conjugacy classes of $W$: $C_1,\ldots,C_r$ and a sequence $\alpha_1,\ldots,\alpha_r\in\mathbb{N}^n$ such that
				\begin{align*}
					&\{g=\phi(w):{\rm sig}(w)=\alpha_i\}\cap C_i\neq\varnothing\quad\text{for}\ i\in[r],
					\\
					&\{g=\phi(w):{\rm sig}(w)=\alpha_i\}\cap C_j=\varnothing\quad\text{for}\ i,j\in[r]\ \text{and}\ j>i.
			    \end{align*}
			    \item[{\rm(b)}]
			    For two finite dimensional complex linear representations $\rho_1$ and $\rho_2$ for $W$, $d(S,\rho_1)=d(S,\rho_2)$ if and only if $\rho_1\cong\rho_2$.
            \end{enumerate}
		\end{theorem}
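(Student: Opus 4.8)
The plan is to obtain part (a) by a structural/combinatorial analysis and then to deduce part (b) from it by ordinary character theory. One implication of (b) is trivial: if $\rho_1\cong\rho_2$ then $x_0I+\sum_j x_j\rho_1(s_j)$ and $x_0I+\sum_j x_j\rho_2(s_j)$ are conjugate over the field $\mathbb{C}(x_0,\dots,x_n)$, so their determinants coincide. The content is the converse, and it is exactly here that (a) enters.

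To deduce (b) from (a), set $M=M(x_1,\dots,x_n)=x_1\rho(s_1)+\cdots+x_n\rho(s_n)$, so that $d(S,\rho)=\det(x_0I+M)=\sum_{k=0}^{\deg\rho}x_0^{\deg\rho-k}e_k(M)$ with $e_k(M)$ homogeneous of degree $k$ in $x_1,\dots,x_n$. Over $\mathbb{C}$, Newton's identities together with the Cayley--Hamilton recursion show that, as an identity of polynomials in $x_1,\dots,x_n$, the datum of $d(S,\rho)$ is equivalent to the datum of the whole family $\operatorname{tr}(M^m)$, $m\ge 1$ (the recursion is needed because some weights $|\alpha_i|$ may exceed $\deg\rho$). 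Expanding,
\[
\operatorname{tr}(M^m)=\sum_{(i_1,\dots,i_m)\in[n]^m}x_{i_1}\cdots x_{i_m}\,\chi_\rho(s_{i_1}\cdots s_{i_m})=\sum_{|\alpha|=m}x^\alpha\sum_{w:\,\operatorname{sig}(w)=\alpha}\chi_\rho(\phi(w)),
\]
where $x^\alpha=x_1^{\alpha_1}\cdots x_n^{\alpha_n}$ and $\chi_\rho$ is the character of $\rho$. Hence $d(S,\rho)$ determines, for every $\alpha\in\mathbb{N}^n$, the number
\[
T_\rho(\alpha):=\sum_{w:\,\operatorname{sig}(w)=\alpha}\chi_\rho(\phi(w))=\sum_{j=1}^{r}N(\alpha,C_j)\,\chi_\rho(C_j),\qquad N(\alpha,C_j):=\#\{w\in M(S):\operatorname{sig}(w)=\alpha,\ \phi(w)\in C_j\},
\]
using that $\chi_\rho$ is constant on each $C_j$; note $N(\alpha,C_j)$ depends only on $W$, not on $\rho$. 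Now invoke part (a): with the ordering $C_1,\dots,C_r$ and the signatures $\alpha_1,\dots,\alpha_r$ it supplies, the $r\times r$ integer matrix $\big(N(\alpha_i,C_j)\big)_{1\le i,j\le r}$ satisfies $N(\alpha_i,C_j)=0$ for $j>i$ and $N(\alpha_i,C_i)\ge 1$, so it is lower triangular with nonzero diagonal, hence invertible. Therefore $(\chi_\rho(C_1),\dots,\chi_\rho(C_r))$ is recovered (by forward substitution) from $(T_\rho(\alpha_1),\dots,T_\rho(\alpha_r))$, and the latter is read off from $d(S,\rho)$. Consequently $d(S,\rho_1)=d(S,\rho_2)$ forces $\chi_{\rho_1}=\chi_{\rho_2}$, and hence $\rho_1\cong\rho_2$, since complex finite-dimensional representations of a finite group are determined up to isomorphism by their characters.

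For part (a), the strategy is to order the conjugacy classes first by minimal Coxeter length $\ell(C)=\min\{\ell(g):g\in C\}$, breaking ties by a secondary rule chosen below. For a class $C$ of minimal length $\ell$, take $\alpha_C$ of weight $\ell$ equal to the signature of a reduced word of some minimal-length element of $C$; since a word $w$ with $\operatorname{sig}(w)=\alpha_C$ satisfies $\ell(\phi(w))\le|\alpha_C|=\ell$, with equality precisely when $w$ is reduced, no element $\phi(w)$ with $\operatorname{sig}(w)=\alpha_C$ can lie in a class of strictly larger minimal length, and the only later classes it could meet are those of minimal length exactly $\ell$, reached through reduced words, i.e.\ through minimal-length elements. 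So the problem reduces to: among the classes sharing a fixed minimal length $\ell$, assign to each a signature drawn from those of its minimal-length elements (or, if unavoidable, a longer one) and order these classes so that the relation ``$C\to C'$ whenever a reduced word of signature $\alpha_C$ represents an element of $C'\ne C$'' is acyclic, after which one orders compatibly. I expect this to be the main obstacle: one must rule out mutually obstructing families of equal-length classes (where no choice of minimal-length signatures is acyclic) and, in the residual cases, permit enlarging some $\alpha_i$ beyond weight $\ell(C_i)$ while re-checking that the enlargement still reaches no later class. I anticipate this will require the classification of finite Coxeter groups and explicit information on conjugacy classes and their minimal-length representatives (in the spirit of the Geck--Pfeiffer theory), handled type by type, with the exceptional groups and the hyperoctahedral/demihyperoctahedral types being the delicate cases.
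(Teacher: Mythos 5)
Your deduction of (b) from (a) is correct and is essentially the route the paper takes: the fact that $d(S,\rho)$ determines all the sums $\sum_{\operatorname{sig}(w)=\alpha}\chi_\rho(\phi(w))$ is exactly \cite[Theorem 7.5]{CST} (Theorem \ref{Tsig} here, which you re-derive via Newton's identities), and the forward-substitution step using the lower-triangular matrix $\big(N(\alpha_i,C_j)\big)$ with nonzero diagonal is the paper's Proposition \ref{keyprop} combined with Remark \ref{cssmr}. That half of your argument is complete.

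The gap is in part (a), which is the actual content of the theorem and occupies Sections 4--6 of the paper. Your reduction is sound as far as it goes: ordering classes by minimal length $\ell(C)$ and taking $\alpha_C$ to be the signature of a reduced word of a minimal-length element does kill all classes of strictly larger minimal length, so the problem is concentrated in families of classes sharing the same minimal length. But you then explicitly defer the resolution of these ties (``I expect this to be the main obstacle\dots handled type by type''), and that deferred step \emph{is} the theorem. The paper resolves it as follows: (i) a reduction to irreducible types via a Kronecker-product argument for direct products (Theorem \ref{g1g2prop}); (ii) for types ${\rm I_2(m)},{\rm A_n},{\rm B_n},{\rm D_n}$, a statistic ${\rm ct}(w)=(|w|,a_1,\ldots,a_{n-1})$ strictly finer than length, together with the facts (Proposition \ref{ABDprop}, from \cite{CST}) that every word can be conjugated down in ${\rm ct}$-order and that words with equal signature and equal minimal ${\rm ct}$ are conjugate --- this is what makes the many equal-length classes of ${\rm B_n}$ and ${\rm D_n}$ separable, and your plan has no analogue of it; (iii) for exceptional types, a split into non-cuspidal classes (handled by induction over proper parabolic subgroups, Theorem \ref{ucsp}, which is why the direct-product reduction matters) and cuspidal classes, where the Geck--Pfeiffer tables show the equal-length ties are confined to finitely many explicit pairs in ${\rm H_4},{\rm F_4},{\rm E_8}$, each broken by exhibiting a specific signature and verifying by a pruned computer search that it misses the competing class (Theorem \ref{cmp}, Proposition \ref{f4prop}, Remark \ref{h4e8rmk}). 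None of this case analysis is present in your proposal, and there is no a priori reason the acyclicity you need holds without it, so part (a) remains unproved.
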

		In this paper, such a sequence $\alpha_1,\ldots,\alpha_r$ in (a) is called an {\it independent signature sequence} (ISS) with a lower triangular {\it independent signature matrix} (ISM). Basically, the main idea of the proof is to show the existence of such sequence. In \cite{CST}, authors provided a key theorem \cite[Theorem 7.5]{CST}, cited as Theorem \refeq{Tsig} in this paper, which provides a relation between the characters of the representations and combinatorics of words for a finitely generated group $G$. Based on this, it can be proven that if $G$ has such a sequence, then for any $\rho_1,\rho_2\in\mathcal{R}(G)$, the equation $d(S,\rho_1)=d(S,\rho_2)$ implies $\chi_{\rho_1}=\chi_{\rho_2}$ where $\chi$ denotes the character, see Proposition \refeq{keyprop}. Besides, we prove that if each of $G_1,\ldots,G_k$ has such a sequence, then so does $G_1\times\cdots\times G_k$, see Corollary \refeq{dpc}. Thus the crucial step is to show that each irreducible finite Coxeter group has an independent signature sequence with a lower triangular independent signature matrix, which is one of the core components of this paper.
		\par
		The structure of the paper is sketched as follows. In Section 2, we recall some necessary conceptions and basic results about the Coxeter system. In Section 3, we define independent signature sequence and present some properties about it. In Section 4 and 5, we show that each irreducible finite Coxeter group has an independent signature sequence with a lower triangular independent signature matrix. For classical types, our proof is inspired by the proof in \cite[Section 7]{CST} where the authors proved that for types ${\rm A,B\ \text{and}\ D}$, $d(S,\rho_1)=d(S,\rho_2)$ if and only if $\rho_1\cong\rho_2$. We refine its proof and obtain the existence of an independent signature sequence with a lower triangular independent signature matrix for each classical type. For exceptional types, we start with their conjugacy classes and reduce the problem to a theorem about cuspidal classes (a special type of conjugacy classes), namely Theorem \refeq{cmp}. Cuspidal classes of exceptional types were listed by M. Geck and G. Pfeiffer in \cite[Appendix B]{GP}. The proof of Theorem \refeq{cmp} is presented in Section 6 and is largely based on the results obtained by running the algorithm we developed in MATLAB. In Section 7, we complete the proof of Theorem \refeq{MT}, establish a semigroup isomorphism between representations and their characteristic polynomials as a corollary, and provide two examples for $W({\rm H_3})$ and $W({\rm A_n})$.

		\section{The Coxeter system}\label{sec2}
		\hspace{1.5em}We first recall some basic results about the Coxeter system, which
		can be found in \cite{H} or any other books about it.
		\begin{definition}
			Let $M=(m_{ij})_{1\leq i,j\leq n}$ be a symmetric $n\times n$ matrix with entries from $\mathbb{N}\cup \{\infty\}$ such that $m_{ii}=1$ for all $i\in[n]$ and $m_{ij}>1$ whenever $i\neq j$. The {\it Coxeter group} of type $M$ is the group
				\begin{equation*}
					W(M)=\langle\{s_1,\ldots,s_n\}\ |\ \{(s_is_j)^{m_{ij}}=1,\ i,j\in [n],m_{ij}<\infty\}\rangle.
				\end{equation*}
				$S=\{s_1,\ldots,s_n\}$ is called the {\it Coxeter generating set}. If no confusion is imminent, we write $W$ instead of $W(M)$. The pair $(W,S)$ is called the {\it Coxeter system} of type $M$.	
		\end{definition}

		Fix a Coxeter matrix $M=(m_{ij})_{i,j\in[n]}$. Let $V$ be a real vector space with basis $(e_i)_{i\in[n]}$. Denote by $B_M$, or just $B$ if $M$ is clear from the context, the symmetric bilinear form on $V$ determined by
		\begin{equation*}
			B(e_i,e_j)=-2\cos(\pi/m_{ij}).
		\end{equation*}
		The linear transformations $\sigma_i$ ($i\in[n]$) is given by
		\begin{equation*}
			\sigma_i(x)=x-B(x,e_i)e_i\quad(x\in V).
		\end{equation*}
		
    	\begin{theorem}[{\cite[Section 5.3, 5.4]{H}}]\label{refrep}
    		Let $(W,S)$ be a Coxeter system.
    		\begin{enumerate}
    			\item[{\rm(a)}]
    			The mapping $w\mapsto\sigma_w$ given by $\sigma_w=\sigma_{r_1}\cdots\sigma_{r_q}$ whenever $w=s_{r_1}\cdots s_{r_q} \\ \in W$ with $r_j\in [n]$ ($j\in[q]$) defines a linear representation of $W$ on $V$, which is called the reflection representation (or the geometric representation) of $W$, denoted by $\rho_{\it ref}$.
    			\item[{\rm(b)}]
    			The reflection representation of $W$ is faithful.
    		\end{enumerate}
    	\end{theorem}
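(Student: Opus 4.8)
The plan is to treat the two parts by different tools: part (a), the assertion that $w\mapsto\sigma_w$ is well defined, reduces to checking that the linear maps $\sigma_i$ satisfy the defining relations of $W$, while part (b), faithfulness, I would extract from the combinatorics of roots together with the word-length function $\ell$ on $W$.

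For part (a), I would invoke the universal property of the Coxeter presentation: to produce a homomorphism $W\to GL(V)$ with $s_i\mapsto\sigma_i$, it suffices to verify $\sigma_i^2=\mathrm{id}$ for all $i$ and $(\sigma_i\sigma_j)^{m_{ij}}=\mathrm{id}$ whenever $m_{ij}<\infty$; once this homomorphism exists, $\sigma_w=\sigma_{r_1}\cdots\sigma_{r_q}$ is automatically independent of the chosen expression $w=s_{r_1}\cdots s_{r_q}$. First I would record that $B(e_i,e_i)=-2\cos(\pi/m_{ii})=-2\cos\pi=2$, so that $\sigma_i(e_i)=-e_i$ while $\sigma_i$ fixes the $B$-orthogonal complement of $e_i$ pointwise; in particular $\sigma_i^2=\mathrm{id}$. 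For the braid-type relation I would restrict to the plane $V_{ij}=\mathrm{span}\{e_i,e_j\}$. When $m_{ij}<\infty$ the Gram matrix of $B|_{V_{ij}}$ has determinant $4\sin^2(\pi/m_{ij})>0$ and positive diagonal, so $B|_{V_{ij}}$ is positive definite; a direct computation then shows that on this Euclidean plane $\sigma_i\sigma_j$ acts as a rotation through $2\pi/m_{ij}$, whence $(\sigma_i\sigma_j)^{m_{ij}}$ restricts to the identity on $V_{ij}$. Since $V=V_{ij}\oplus V_{ij}^{\perp}$ and both $\sigma_i$ and $\sigma_j$ fix $V_{ij}^{\perp}$ pointwise, the relation holds on all of $V$, completing (a).

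For part (b), I would introduce the root system $\Phi=\{\sigma_w(e_i):w\in W,\ i\in[n]\}$, calling a root \emph{positive} (resp.\ \emph{negative}) if all its coordinates in the basis $(e_k)_{k\in[n]}$ are $\geq 0$ (resp.\ $\leq 0$), and writing $\Phi^{+}$, $\Phi^{-}$ for the corresponding sets. The key preliminary observation is that since $\sigma_i\bigl(\sum_k c_k e_k\bigr)=\sum_k c_k e_k-B\bigl(\sum_k c_k e_k,e_i\bigr)e_i$ alters only the $e_i$-coordinate, the reflection $\sigma_i$ sends $e_i\mapsto -e_i$ and maps $\Phi^{+}\setminus\{e_i\}$ into itself. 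The heart of the argument is the \emph{sign lemma}, proved by induction on $\ell(w)$: every root lies in $\Phi^{+}$ or in $\Phi^{-}$ (so $\Phi=\Phi^{+}\sqcup\Phi^{-}$), and if $\ell(ws_i)>\ell(w)$ then $\sigma_w(e_i)\in\Phi^{+}$. Granting this, faithfulness is immediate: if $\sigma_w=\mathrm{id}$ with $w\neq 1$, take a reduced expression of $w$ ending in $s_j$, so that $\ell(ws_j)<\ell(w)$ and hence $\sigma_w(e_j)\in\Phi^{-}$; but $\sigma_w(e_j)=e_j\in\Phi^{+}$, a contradiction, forcing $w=1$ and a trivial kernel.

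The rotation computation in (a) and the permutation property of $\sigma_i$ on positive roots are straightforward once set up. I expect the main obstacle to be the inductive sign lemma in (b): one must establish the dichotomy $\Phi=\Phi^{+}\sqcup\Phi^{-}$ and the length criterion simultaneously, and the inductive step requires carefully tracking how applying a simple reflection changes the set of positive roots sent to negative ones — precisely the point at which the positive-definiteness furnished by the form $B$ meets the combinatorics of reduced words.
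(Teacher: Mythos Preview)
The paper does not actually prove this theorem: it is stated with a citation to \cite[Section 5.3, 5.4]{H} and no argument is supplied, since the result is a classical foundational fact about Coxeter systems rather than a contribution of the present work. Your proposal is correct and is precisely the standard argument one finds in Humphreys' text --- verifying the defining relations on the two-dimensional subspaces $V_{ij}$ for part~(a), and deducing faithfulness from the length/sign criterion $\ell(ws_i)>\ell(w)\iff\sigma_w(e_i)\in\Phi^{+}$ for part~(b) --- so there is nothing to compare beyond noting that you have reproduced the cited source.
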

    	
		\begin{definition}
			The Coxeter matrix $M=(m_{ij})_{1\leq i,j\leq n}$ is often described by a labelled graph $\Gamma(M)$ whose vertex set is $[n]$ and in which two nodes $i$ and $j$ are joined by an edge labelled $m_{ij}$ if $m_{ij}>2$. If $m_{ij}=3$, then the label 3 of of the edge $\{i,j\}$ is often omitted. If $m_{ij}=4$, then instead of the label 4 at the edge $\{i,j\}$ one often draws a double bond. This labelled graph is called the {\it Coxeter diagram} of $M$. We say that the Coxeter system $(W,S)$ is irreducible if the Coxeter diagram $\Gamma$ is connected. 
		\end{definition}
		
		\begin{definition}
			Let $(W,S)$ be a Coxeter system, $J\subseteq S$ be any subset and consider the subgroup $W_J\coloneqq\langle J\rangle\leqslant W$. Then $W_J$ is called a parabolic subgroup of $W$. We call $W_J$ a proper parabolic subgroup if $J\neq S$.
		\end{definition}
		
		\begin{proposition}[{\cite[Section 2.2]{H}}]
			Let Coxeter system $(W,S)$ have Coxeter diagram $\Gamma$, with connected components $\Gamma_1,\ldots,\Gamma_r$, and let $S_1,\ldots,S_r$ be the corresponding subsets of $S$. Then $W$ is the direct product of the parabolic subgroups $W_{S_1},\ldots,W_{S_r}$, and each Coxeter system $(W_{S_i},S_i)$ is irreducible.
		\end{proposition}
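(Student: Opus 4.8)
The plan is to establish the direct product decomposition first and then the irreducibility of each factor. Write $S=S_1\sqcup\cdots\sqcup S_r$ for the partition of the generators induced by the connected components of $\Gamma$. The first observation is that generators lying in distinct components commute: if $s_k\in S_i$ and $s_l\in S_j$ with $i\neq j$, then there is no edge between $k$ and $l$ in $\Gamma$, which forces $m_{kl}=2$, so that $(s_ks_l)^2=1$ and hence $s_ks_l=s_ls_k$. Consequently $W_{S_i}$ and $W_{S_j}$ commute elementwise whenever $i\neq j$, so the multiplication map
\[
\psi:W_{S_1}\times\cdots\times W_{S_r}\to W,\qquad (w_1,\ldots,w_r)\mapsto w_1w_2\cdots w_r,
\]
is a group homomorphism. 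It is surjective because $S=\bigcup_i S_i$ generates $W$.

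The crux is the injectivity of $\psi$, and here I would invoke the reflection representation $\rho_{\it ref}$ together with its faithfulness (Theorem \ref{refrep}). Decompose $V=V_1\oplus\cdots\oplus V_r$, where $V_i$ is spanned by $\{e_k:s_k\in S_i\}$. Since $B(e_k,e_l)=-2\cos(\pi/m_{kl})=0$ whenever $s_k,s_l$ lie in different components, the subspaces $V_i$ are mutually orthogonal; moreover, for $s_k\in S_i$ the reflection $\sigma_k$ fixes $V_j$ pointwise for $j\neq i$ (as $B(x,e_k)=0$ for $x\in V_j$) and preserves $V_i$. Therefore each $\sigma_{w_i}$ with $w_i\in W_{S_i}$ acts as the identity on $V_j$ for all $j\neq i$. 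Now suppose $\psi(w_1,\ldots,w_r)=1$; restricting $\sigma_{w_1}\cdots\sigma_{w_r}=\mathrm{id}$ to $V_i$ and using that the factors $\sigma_{w_j}$ with $j\neq i$ are trivial there shows $\sigma_{w_i}|_{V_i}=\mathrm{id}$. Combined with $\sigma_{w_i}|_{V_j}=\mathrm{id}$ for $j\neq i$, this gives $\sigma_{w_i}=\mathrm{id}$ on all of $V$, whence $w_i=1$ by faithfulness. Thus $\psi$ is an isomorphism and $W=W_{S_1}\times\cdots\times W_{S_r}$.

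For the second assertion, I would note that each parabolic subgroup $W_{S_i}$ is itself a Coxeter system with generating set $S_i$ and Coxeter matrix equal to the restriction $(m_{kl})_{s_k,s_l\in S_i}$ of $M$; this is the standard fact that a parabolic subgroup of a Coxeter system is again a Coxeter system on its generators. Its Coxeter diagram is precisely the full subgraph $\Gamma_i$ of $\Gamma$ on the vertices of $S_i$, which is connected since $\Gamma_i$ was taken to be a connected component. Hence $(W_{S_i},S_i)$ is irreducible by definition.

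The main obstacle is the injectivity step: commutativity across components and surjectivity are immediate, but without additional structural input there is no elementary reason the product of the factors cannot collapse. The decisive ingredient is the faithfulness of the reflection representation from Theorem \ref{refrep}, which lets the orthogonal block decomposition of $V$ detect each factor $w_i$ independently. An alternative route to injectivity would use the parabolic intersection identity $W_J\cap W_K=W_{J\cap K}$ to show $W_{S_i}\cap\langle\bigcup_{j\neq i}S_j\rangle=W_{\varnothing}=\{1\}$, but the reflection-representation argument is self-contained given the results already available in the excerpt.
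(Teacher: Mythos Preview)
Your argument is correct. The paper does not actually supply its own proof of this proposition; it is stated with a citation to Humphreys \cite[Section 2.2]{H} and used as a background fact. So there is no in-paper proof to compare against.

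Your route via the faithful reflection representation is a clean and standard way to obtain injectivity of the multiplication map, and it fits naturally with the results already quoted in the paper (Theorem~\ref{refrep}). The alternative you mention, using the parabolic intersection formula $W_J\cap W_K=W_{J\cap K}$, is closer in spirit to how this is often handled combinatorially (e.g.\ via the deletion/exchange conditions), but either approach is perfectly adequate here. Nothing is missing.
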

		
		\begin{theorem}[{\cite[Section 2.4, 2.7]{H}}]
			An irreducible Coxeter group is finite if and only if its Coxeter diagram occurs in Table \refeq{table1} (see Section 4) or Table \refeq{table2} (see Section 5), where the types in the former are called {\it classical types} while the types in the latter are called {\it exceptional types}.
		\end{theorem}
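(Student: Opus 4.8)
The plan is to route the entire statement through a single analytic criterion and then carry out a finite combinatorial search. The pivotal intermediate fact, established in \cite[Chapter 6]{H}, is that for any Coxeter matrix $M$ the group $W(M)$ is finite if and only if the symmetric bilinear form $B_M$ of Section~\ref{sec2} is positive definite. Granting this, the theorem collapses to the purely linear-algebraic and combinatorial problem of enumerating all \emph{connected} Coxeter diagrams whose Gram matrix $G=(B(e_i,e_j))_{i,j\in[n]}$ is positive definite, and verifying that this enumeration coincides exactly with the diagrams appearing in the two tables.

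First I would prove the equivalence. Each $\sigma_i$ is by construction a $B$-isometry, so $B=B_M$ is $W$-invariant. For the direction ($\Leftarrow$): if $G$ is positive definite then $(V,B)$ is a Euclidean space and, via the reflection representation $\rho_{\it ref}$, which is faithful by Theorem~\ref{refrep}(b), $W$ is realised as a group of orthogonal transformations permuting the set of equal-length roots $\{\sigma_w(e_i):w\in W,\ i\in[n]\}$; a standard discreteness argument then forces $W$ to be a finite subgroup of the compact group $O(V,B)$. For ($\Rightarrow$): if $W$ is finite, averaging any inner product over $W$ yields a $W$-invariant positive definite form, and a comparison of this form with $B$, both being $W$-invariant, shows, using $B(e_i,e_i)=2>0$ to fix signs, that $B$ is itself positive definite.

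The substance is the classification of positive definite connected diagrams, which I would carry out by the classical forbidden-subdiagram method. The two inputs are: (i) \emph{monotonicity} — a principal submatrix of a positive definite matrix is positive definite, so every full subdiagram of a positive definite diagram is again positive definite; and (ii) a short explicit list of \emph{minimal non-positive-definite} connected diagrams, namely the affine (Euclidean) diagrams $\widetilde{A}_n,\widetilde{B}_n,\widetilde{C}_n,\widetilde{D}_n,\widetilde{E}_6,\widetilde{E}_7,\widetilde{E}_8,\widetilde{F}_4,\widetilde{G}_2$ together with the small diagrams carrying oversized labels, each of whose Gram matrix one checks has determinant $\le 0$. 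A positive definite diagram may therefore contain none of these as a subdiagram, and I would extract the structural consequences in sequence: excluding $\widetilde{A}_n$, a labelled cycle, shows the diagram is a tree; excluding $\widetilde{D}_n$ and the $\widetilde{E}$ diagrams limits it to at most one branch vertex, whose three arms of lengths $p,q,r$ must satisfy $1/p+1/q+1/r>1$, an inequality with only the solutions giving $D_n$ and $E_6,E_7,E_8$; and excluding $\widetilde{C}_n,\widetilde{F}_4,\widetilde{G}_2$ together with the oversized-label diagrams pins down the edges of label $\ge 4$, yielding precisely $B_n,F_4,H_3,H_4,G_2$ and the rank-two family $I_2(m)$. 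Assembling the cases reproduces exactly the diagrams of the two tables.

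The main obstacle is input (ii) together with the exhaustive case analysis it feeds: one must verify that the listed affine diagrams really are the complete set of minimal non-positive-definite connected diagrams, which requires computing the relevant Gram determinants, and then check that every connected diagram avoiding all of them does occur in the tables, leaving no gaps. The arithmetic — the determinant evaluations and the finitely many solutions of the arm-length inequality — is routine, but the bookkeeping needed to guarantee completeness is where the care lies.
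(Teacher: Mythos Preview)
The paper does not prove this theorem at all: it is stated as background and attributed to \cite[Sections 2.4, 2.7]{H}, with no argument given. There is therefore nothing in the paper to compare your proposal against.

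That said, your outline is exactly the classical proof one finds in the cited reference and is sound. Two small points worth tightening if you flesh it out. First, in the $(\Rightarrow)$ direction the ``comparison'' of the averaged inner product with $B$ is not just a sign check: you need that for each $i$ the fixed hyperplane of $\sigma_i$ is simultaneously $B(e_i,\cdot)^{\perp}$ and $\langle e_i,\cdot\rangle^{\perp}$, giving $B(e_i,\cdot)=c_i\langle e_i,\cdot\rangle$ with $c_i>0$, and then connectedness of the diagram to propagate $c_i=c_j$ along edges; only then does $B$ become a positive multiple of the averaged form. Second, in the $(\Leftarrow)$ direction ``discreteness'' alone is not quite the point; the clean argument is that the $W$-orbit of the simple roots lies on a sphere, is locally finite, hence finite, and $W$ acts faithfully on it by Theorem~\ref{refrep}(b). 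With those two steps made explicit, your plan is complete.
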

		
		\begin{corollary}
			A Coxeter group $G$ is finite if and only if $G\cong W(M_1)\times W(M_2)\times\cdots\times W(M_k)$ where each $M_i$ occurs in Table \refeq{table1} or Table \refeq{table2}.
		\end{corollary}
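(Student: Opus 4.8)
The plan is to read this off directly from the two immediately preceding results: the decomposition of a Coxeter system into its irreducible components, and the classification of finite irreducible Coxeter groups.

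For the forward direction, I would start with a finite Coxeter group $G=W(M)$ equipped with a Coxeter generating set $S$, and pass to the connected components $\Gamma_1,\ldots,\Gamma_k$ of the Coxeter diagram $\Gamma(M)$, with $S_1,\ldots,S_k$ the corresponding partition of $S$. By the Proposition above, $G=W_{S_1}\times\cdots\times W_{S_k}$ with each $(W_{S_i},S_i)$ an irreducible Coxeter system, say of type $M_i$. Each $W_{S_i}$ is a direct factor of the finite group $G$, hence finite, so by the classification theorem its diagram $\Gamma(M_i)=\Gamma_i$ appears in one of the two tables. This gives $G\cong W(M_1)\times\cdots\times W(M_k)$ with each $M_i$ drawn from the tables.

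For the converse, suppose $G\cong W(M_1)\times\cdots\times W(M_k)$ with each $M_i$ occurring in one of the tables. Each diagram $\Gamma(M_i)$ is connected, since the tables list only irreducible diagrams, so each $W(M_i)$ is an irreducible Coxeter group whose diagram lies in the tables, hence finite by the classification theorem. A finite direct product of finite groups is finite, so $G$ is finite.

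There is no genuine obstacle here: the argument is essentially bookkeeping around the cited results. The only steps worth a sentence are that a direct factor of a finite group is finite, which is immediate from $|G|=\prod_i|W_{S_i}|$, and that the list $M_1,\ldots,M_k$ produced in the forward direction depends a priori on the chosen Coxeter generating set $S$; since the statement only asserts the existence of such a decomposition up to isomorphism, fixing any one Coxeter generating set of $G$ suffices.
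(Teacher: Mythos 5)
Your argument is correct and is exactly the intended one: the paper states this corollary without proof as an immediate consequence of the preceding decomposition proposition and the classification theorem for irreducible finite Coxeter groups. Your bookkeeping (direct factors of a finite group are finite; finite products of finite groups are finite) fills in precisely what the paper leaves implicit.
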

	    
		\section{Independent signature sequences and their properties}
		\hspace{1.5em}Suppose $G$ is a finitely generated group with a generating set $S=\{s_1,\ldots,\\s_n\}$. The unit element of $M(S)$ is denoted by $\varepsilon$. Given a word $w\in M(S)$, the length of $w$ is the number of letters in $w$, denoted by $|w|$. Given a signature $\alpha=(a_1,\ldots,a_n)\in\mathbb{N}^n$, the length of $\alpha$ is $|\alpha|=\sum_{i=1}^n a_i$.
	    \par
		For the finite dimensional complex linear representations of $G$, we have the following theorem.
		\begin{theorem}[{\cite[Theorem 7.5]{CST}}]\label{Tsig}
			Let $G$ be any finitely generated group with generating set $S=\{s_1,\ldots,s_n\}$, and $\rho_1,\rho_2 \in {\mathcal R}(G)$. If $d(S,\rho_1)=d(S,\rho_2)$, then for each given $\alpha\in\mathbb{N}^n$, we have
			\begin{equation}\label{sig}
				\sum_{{\rm sig}(w)=\alpha} \chi_{\rho_1}(w)=\sum_{{\rm sig}(w)=\alpha} \chi_{\rho_2}(w)
			\end{equation}
			where $\chi_{\rho_i}$ denotes the character of $\rho_i$.
		\end{theorem}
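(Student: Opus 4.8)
The plan is to show that the single polynomial $d(S,\rho)$ already determines every sum $\sum_{{\rm sig}(w)=\alpha}\chi_\rho(w)$, so that $d(S,\rho_1)=d(S,\rho_2)$ forces \eqref{sig} at once. The device is the classical identity $\log\det=\operatorname{tr}\log$: it rewrites $\det\bigl(I+\sum_{i}x_i\rho(s_i)\bigr)$ as a sum of traces $\operatorname{tr}\bigl((\sum_i x_i\rho(s_i))^k\bigr)$, and such a trace of a power, expanded in the $x_i$, is exactly a generating function whose coefficients are the sums in question.

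In detail, I would argue as follows. Put $L\coloneqq\sum_{i=1}^n x_i\rho(s_i)$, a matrix whose entries lie in the maximal ideal of $R\coloneqq\mathbb{C}[[x_1,\ldots,x_n]]$, and $f_\rho\coloneqq\det(I+L)$. Since $d(S,\rho)=\det(x_0I+L)$ is homogeneous of degree $\dim\rho$, the hypothesis $d(S,\rho_1)=d(S,\rho_2)$ forces both $\dim\rho_1=\dim\rho_2$ (equality of total degrees) and, on setting $x_0=1$, $f_{\rho_1}=f_{\rho_2}$. Now $f_\rho$ has constant term $1$, so $\log f_\rho\in R$ is defined, and by the identity $\log\det=\operatorname{tr}\log$ (see below),
\begin{equation*}
	\log f_\rho=\operatorname{tr}\log(I+L)=\sum_{k\geq1}\frac{(-1)^{k+1}}{k}\operatorname{tr}(L^k).
\end{equation*}
Expanding a trace of a power and grouping monomials by signature,
\begin{align*}
	\operatorname{tr}(L^k)&=\sum_{(i_1,\ldots,i_k)\in[n]^k}x_{i_1}\cdots x_{i_k}\operatorname{tr}\bigl(\rho(s_{i_1})\cdots\rho(s_{i_k})\bigr)\\
	&=\sum_{|\alpha|=k}\Bigl(\sum_{{\rm sig}(w)=\alpha}\chi_\rho(w)\Bigr)x^\alpha,
\end{align*}
since a tuple $(i_1,\ldots,i_k)\in[n]^k$ is the same datum as a word $w=s_{i_1}\cdots s_{i_k}$ of length $k$ with $x_{i_1}\cdots x_{i_k}=x^{{\rm sig}(w)}$ and $\operatorname{tr}\bigl(\rho(s_{i_1})\cdots\rho(s_{i_k})\bigr)=\chi_\rho(w)$. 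Combining the two displays, the coefficient of $x^\alpha$ in $\log f_\rho$ equals $\frac{(-1)^{|\alpha|+1}}{|\alpha|}\sum_{{\rm sig}(w)=\alpha}\chi_\rho(w)$ for every $\alpha\neq0$. Hence $f_{\rho_1}=f_{\rho_2}$ gives $\log f_{\rho_1}=\log f_{\rho_2}$, and comparing coefficients of $x^\alpha$ yields \eqref{sig} for all $\alpha\neq0$; for $\alpha=0$ the only word is the empty word, with $\chi_\rho(\varepsilon)=\dim\rho$, so \eqref{sig} reads $\dim\rho_1=\dim\rho_2$, already established.

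The one point that needs care is the formal identity $\log\det(I+L)=\operatorname{tr}\log(I+L)$ for a matrix $L$ over a power series ring with entries in the maximal ideal. A convenient route is Jacobi's formula, applied with an auxiliary parameter $t$: $\frac{d}{dt}\det(I+tL)=\det(I+tL)\cdot\operatorname{tr}\bigl((I+tL)^{-1}L\bigr)$, where $(I+tL)^{-1}=\sum_{j\geq0}(-tL)^j$ makes sense formally; dividing by $\det(I+tL)$, using $(I+tL)^{-1}L=\sum_{j\geq0}(-t)^jL^{j+1}$, and integrating termwise from $t=0$ to $t=1$ gives exactly $\log\det(I+L)=\sum_{k\geq1}\frac{(-1)^{k-1}}{k}\operatorname{tr}(L^k)$. (Alternatively, apply the identity $\det\exp(A)=\exp\operatorname{tr}(A)$ — valid over $\mathbb{C}$, hence as a formal identity — to $A=\log(I+L)$; or bypass logarithms using Newton's identities, which express each $\operatorname{tr}(L^k)$ as a universal polynomial in the coefficients of $\det(I+L)$, so that $d(S,\rho)$ determines all of them.) I expect this power-series bookkeeping, rather than any conceptual difficulty, to be the only real obstacle; once it is in place, the theorem follows from the coefficient comparison above.
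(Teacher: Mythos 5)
Your proof is correct. The paper itself gives no proof of this statement --- it is imported verbatim from \cite[Theorem 7.5]{CST} --- and your argument via $\log\det(I+L)=\operatorname{tr}\log(I+L)=\sum_{k\ge1}\tfrac{(-1)^{k+1}}{k}\operatorname{tr}(L^k)$, with the coefficient of $x^\alpha$ isolating $\sum_{{\rm sig}(w)=\alpha}\chi_\rho(w)$ up to the nonzero factor $(-1)^{|\alpha|+1}/|\alpha|$, is essentially the standard proof used in that source; the degree comparison handling $\alpha=0$ and the formal justification of the trace--log identity are both adequately dealt with.
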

		
    	Equation \eqref{Tsig} contains a lot of information for relation between two given representations. Take $\alpha={\rm sig}(s_i)$, it follows that $\chi_{\rho_1}(s_i)=\chi_{\rho_2}(s_i)$; take $\alpha={\rm sig}(s_is_j)$, it follows that $\chi_{\rho_1}(s_is_j)=\chi_{\rho_2}(s_is_j)$. We aim to show that the equality $\chi_{\rho_1}(w)=\chi_{\rho_2}(w)$ holds for any word $w$, but this is challenging to establish directly. Therefore, we develop a method to obtain the information from equation \eqref{Tsig}.
		\begin{definition}\label{evdef}
			Let $G$ be any finitely generated group with finite number of conjugacy classes $C_1,\ldots,C_r$, and $S=\{s_1,\ldots,s_n\}$ be the generating set. Given a signature $\alpha =(a_1,\ldots,a_n)$, $E_{\alpha}\coloneqq\{u\in M(S):{\rm sig}(u)=\alpha\}$. The {\it signature vector} of $\alpha$ is
				\begin{equation*}
					V_{\alpha}=(|E_{\alpha}\cap \phi^{-1}(C_1)|,\ldots,|E_{\alpha}\cap \phi^{-1}(C_r)|).
				\end{equation*}
		\end{definition}
		
		Let $\mathcal{C}_i=(\chi_{\rho_i}(C_1),\ldots,\chi_{\rho_i}(C_r))^{\rm T}$ for $i=1,2$ where $\chi_{\rho_i}(C_j)\coloneqq\chi_{\rho_i}(g)$ for an arbitrary $g\in C_j$, then we can express equation \eqref{sig} as
		\begin{equation*}
			V_{\alpha}\mathcal{C}_1=V_{\alpha}\mathcal{C}_2.
		\end{equation*}
		\begin{definition}
			Let $G,S$ and $C_i$ be as in Definition \refeq{evdef}. A sequence $\alpha_1,\ldots,\\\alpha_r$ of signatures is called an {\it independent signature sequence} (abbreviated as {\rm ISS}) if $V_{\alpha_1},\ldots,V_{\alpha_r}$ is linearly independent. Suppose $\alpha_1,\ldots,\alpha_r$ is an {\rm ISS}, the corresponding {\it independent signature matrix} (abbreviated as {\rm ISM}) is the matrix
				\begin{equation*}
					\left(
					\begin{array}{c}
						V_{\alpha_1}\\
						\vdots\\
						V_{\alpha_r}\\
					\end{array}      	
					\right).        	
				\end{equation*}  
		\end{definition}
		When we talk about {\rm ISM}, there are usually a clear {\rm ISS} and a clear order of conjugacy classes in the context.
		\begin{proposition}\label{keyprop}
			Let $G,S$ and $C_i$ be as in Definition \refeq{evdef}. If $G$ has an {\rm ISS}, then for any $\rho_1,\rho_2\in {\mathcal R}(G)$, $d(S,\rho_1)=d(S,\rho_2)$ implies $\chi_{\rho_1}(g)=\chi_{\rho_2}(g)$ for each $g\in G$.
		\end{proposition}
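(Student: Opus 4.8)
The plan is to convert the character identity furnished by Theorem~\ref{Tsig} into a single linear system governed by the independent signature matrix, and then invert that system.

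First I would note that for any finite dimensional representation $\rho$, the character $\chi_\rho$ is a class function, so $\chi_\rho(w)$ (meaning $\chi_\rho(\phi(w))$) depends only on the conjugacy class of $\phi(w)$. Since $S$ is finite and $\alpha=(a_1,\ldots,a_n)$ has finite length, the set $E_\alpha=\{u\in M(S):{\rm sig}(u)=\alpha\}$ is finite (it has exactly $|\alpha|!/(a_1!\cdots a_n!)$ elements), and the preimages $\phi^{-1}(C_1),\ldots,\phi^{-1}(C_r)$ partition $E_\alpha$ because $\phi$ is a homomorphism onto $G$ and the $C_j$ partition $G$. Grouping the sum on the left of \eqref{sig} accordingly gives
\[
\sum_{{\rm sig}(w)=\alpha}\chi_{\rho_i}(w)=\sum_{j=1}^{r}\bigl|E_\alpha\cap\phi^{-1}(C_j)\bigr|\,\chi_{\rho_i}(C_j)=V_\alpha\mathcal{C}_i,
\]
with $V_\alpha$ and $\mathcal{C}_i$ as in the discussion preceding the statement. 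Hence the hypothesis $d(S,\rho_1)=d(S,\rho_2)$, via Theorem~\ref{Tsig}, forces $V_\alpha(\mathcal{C}_1-\mathcal{C}_2)=0$ for every $\alpha\in\mathbb{N}^n$.

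Next I would specialize $\alpha$ to an {\rm ISS} $\alpha_1,\ldots,\alpha_r$, which exists by assumption, and stack the resulting $r$ identities $V_{\alpha_k}(\mathcal{C}_1-\mathcal{C}_2)=0$ into a single matrix equation $P(\mathcal{C}_1-\mathcal{C}_2)=0$, where $P$ is the associated {\rm ISM}, i.e.\ the $r\times r$ matrix with rows $V_{\alpha_1},\ldots,V_{\alpha_r}$. By the definition of an {\rm ISS} these rows are linearly independent, so $P$ is invertible; therefore $\mathcal{C}_1=\mathcal{C}_2$, which says $\chi_{\rho_1}$ and $\chi_{\rho_2}$ agree on every conjugacy class and hence on every $g\in G$, as required.

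I do not expect a genuine obstacle in this argument: it is purely a change of bookkeeping (from words to conjugacy classes) followed by a rank argument. The only points deserving care are the finiteness of $E_\alpha$, which legitimizes the regrouping of the sum, and the observation that $\phi^{-1}(C_1),\ldots,\phi^{-1}(C_r)$ partition each $E_\alpha$. The substantive difficulty of the paper lies not here but in producing such an {\rm ISS}---indeed one whose {\rm ISM} is lower triangular---for each finite Coxeter group.
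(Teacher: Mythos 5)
Your argument is correct and coincides with the paper's proof: both convert \eqref{sig} into $V_\alpha\mathcal{C}_1=V_\alpha\mathcal{C}_2$ by regrouping the word sum over conjugacy classes, stack the equations for an {\rm ISS} into the {\rm ISM}, and invert. The extra remarks on the finiteness of $E_\alpha$ and the partition by $\phi^{-1}(C_j)$ are just the bookkeeping the paper carries out implicitly before Definition of the {\rm ISS}.
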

		\begin{proof}
			Let $\alpha_1,\ldots,\alpha_r$ be an {\rm ISS} of $G$. By Theorem \refeq{Tsig}, $V_{\alpha}\mathcal{C}_1=V_{\alpha}\mathcal{C}_2$ for any signature $\alpha$. Therefore
			\begin{equation*}
				\left(
				\begin{array}{c}
					V_{\alpha_1}\\
					\vdots\\
					V_{\alpha_r}\\
				\end{array}      	
				\right)        	
				\mathcal{C}_1=
				\left(
				\begin{array}{c}
					V_{\alpha_1}\\
					\vdots\\
					V_{\alpha_r}\\
				\end{array}      	
				\right)    
				\mathcal{C}_2.			
			\end{equation*}
			Since the {\rm ISM} is invertible, we have $\mathcal{C}_1=\mathcal{C}_2$, i.e. $\chi_{\rho_1}(C_i)=\chi_{\rho_2}(C_i)$ for each $i\in[r]$. Thus $\chi_{\rho_1}(g)=\chi_{\rho_2}(g)$ for each $g\in G$.
		\end{proof}
		\begin{remark}\label{cssmr}
			Suppose $G$ is a finite group. It is well known that for $\rho_1,\rho_2\in {\mathcal R}(G)$, $\rho_1\cong\rho_2$ if and only if $\chi_{\rho_1}(g)=\chi_{\rho_2}(g)$ for each $g\in G$. Therefore, if $G$ has an {\rm ISS}, then $d(S,\rho_1)=d(S,\rho_2)$ implies $\rho_1\cong\rho_2$.
		\end{remark}
		The rest of this section focuses on the signatures for the direct product of groups. The following lemma is an elementary conclusion in group theory.
		\begin{lemma}
			Let $G=G_1\times G_2$ where $G_1$ and $G_2$ are any groups. Let $\{C_{1,\alpha}:\alpha\in \Omega_1\}$ and $\{C_{2,\beta}:\beta\in \Omega_2\}$ be the sets of all conjugacy classes of $G_1$ and $G_2$, respectively. Then $\{C_{1,\alpha}\times C_{2,\beta}:(\alpha,\beta)\in\Omega_1\times\Omega_2\}$ is the set of all conjugacy classes of $G$.
		\end{lemma}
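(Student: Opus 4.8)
The plan is to fix an arbitrary element $g=(g_1,g_2)\in G$ and directly compute its conjugacy class in $G$, showing that it equals $C_{1,\alpha_0}\times C_{2,\beta_0}$, where $\alpha_0\in\Omega_1$ is the index with $g_1\in C_{1,\alpha_0}$ and $\beta_0\in\Omega_2$ is the index with $g_2\in C_{2,\beta_0}$. This at once gives the lemma: every element of $G$ then lies in some $C_{1,\alpha}\times C_{2,\beta}$, each such product set is exactly one conjugacy class of $G$, and since the conjugacy classes of $G$ partition $G$ the family $\{C_{1,\alpha}\times C_{2,\beta}:(\alpha,\beta)\in\Omega_1\times\Omega_2\}$ is automatically pairwise disjoint and exhausts $G$, hence is precisely the set of all conjugacy classes of $G$.

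The computation rests only on the componentwise nature of conjugation in a direct product: for any $x=(x_1,x_2)\in G$ one has $xgx^{-1}=(x_1g_1x_1^{-1},\,x_2g_2x_2^{-1})$. Consequently $(h_1,h_2)$ is conjugate to $(g_1,g_2)$ in $G$ if and only if there exist $x_1\in G_1$ and $x_2\in G_2$ with $x_1g_1x_1^{-1}=h_1$ and $x_2g_2x_2^{-1}=h_2$; the forward direction is immediate, and the converse follows because the two factors $x_1,x_2$ can be chosen independently. This condition says exactly that $h_1$ is conjugate to $g_1$ in $G_1$ and $h_2$ is conjugate to $g_2$ in $G_2$, i.e. $h_1\in C_{1,\alpha_0}$ and $h_2\in C_{2,\beta_0}$. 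Hence the conjugacy class of $(g_1,g_2)$ is $C_{1,\alpha_0}\times C_{2,\beta_0}$, as claimed. There is no genuine obstacle here: the statement is elementary, and the only point needing (routine) verification is the displayed "if and only if", both directions of which are settled by working in each factor separately.
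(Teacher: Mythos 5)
Your proof is correct: conjugation in $G_1\times G_2$ acts componentwise and the two conjugating elements can be chosen independently, so the conjugacy class of $(g_1,g_2)$ is exactly $C_{1,\alpha_0}\times C_{2,\beta_0}$, which yields the partition statement. The paper offers no proof of this lemma (it is cited as an elementary fact of group theory), and your argument is the standard one that would be written down.
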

		
		\begin{lemma}\label{countlm}
				Let $G=G_1\times G_2$, where $G_1$ and $G_2$ are finitely generated groups. Generating sets of $G_1,G_2$ and $G$ are $S,T$ and $S\sqcup T$, respectively. Given $w=r_1\cdots r_n \in M(S\sqcup T)$ and $r_i\in S\sqcup T$, let
				\begin{equation*}
					r_i^{(1)}=
					\begin{cases}
						r_i & r_i\in S\\
						\varepsilon & r_i\in T
					\end{cases}
					,
					\quad
					r_i^{(2)}=
					\begin{cases}
						\varepsilon & r_i\in S\\
						r_i & r_i\in T
					\end{cases}
					.
				\end{equation*}
				Let $w^{(1)}=r_1^{(1)}r_2^{(1)}\cdots r_n^{(1)}$, $w^{(2)}=r_1^{(2)}r_2^{(2)}\cdots r_n^{(2)}$, and $f$ denote the map $w\mapsto w^{(1)}w^{(2)}$, then $|f^{-1}(w_1w_2)|=|w_1w_2|!/(|w_1|!|w_2|!)$ for any $w_1\in M(S)$, $w_2\in M(T)$.
		\end{lemma}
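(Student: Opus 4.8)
The plan is to identify $f^{-1}(w_1w_2)$ with the set of ``shuffles'' of $w_1$ and $w_2$ and to count them via a bijection with the $p$-element subsets of $[m]$, where $p=|w_1|$ and $m=|w_1|+|w_2|$; this yields $|f^{-1}(w_1w_2)|=\binom{m}{p}=|w_1w_2|!/(|w_1|!\,|w_2|!)$.

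First I would record two elementary facts about $f$. (i) The map $f$ preserves length: for $w=r_1\cdots r_n$ exactly one of $r_i^{(1)},r_i^{(2)}$ equals $\varepsilon$ and the other equals $r_i$, so $|w^{(1)}|+|w^{(2)}|=n$. (ii) Since $S$ and $T$ are disjoint alphabets, $w^{(1)}\in M(S)$ and $w^{(2)}\in M(T)$, and a word obtained by concatenating a word of $M(S)$ with a word of $M(T)$ determines that decomposition uniquely, the split being located just before the first letter from $T$ (or at the end if there is none). Consequently, if $f(w)=w_1w_2$ with $w_1\in M(S)$ and $w_2\in M(T)$, then necessarily $w^{(1)}=w_1$ and $w^{(2)}=w_2$; in particular $|w|=|w_1|+|w_2|=m$, so every element of $f^{-1}(w_1w_2)$ is a word of length $m$.

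Next I would set up the bijection. Write $w_1=a_1\cdots a_p$ and $w_2=b_1\cdots b_q$ with $q=|w_2|$ and $m=p+q$. To each $w=r_1\cdots r_m\in f^{-1}(w_1w_2)$ associate $A(w)\coloneqq\{\,i\in[m]:r_i\in S\,\}$; by (i) and (ii) this is a $p$-element subset of $[m]$, and reading off the letters of $w$ in the positions of $A(w)$ recovers $w_1$ while the remaining positions recover $w_2$. Conversely, given $A=\{i_1<\cdots<i_p\}\subseteq[m]$ with complement $[m]\setminus A=\{j_1<\cdots<j_q\}$, let $w_A$ be the length-$m$ word with $a_k$ in position $i_k$ for each $k$ and $b_\ell$ in position $j_\ell$ for each $\ell$; then $f(w_A)=w_1w_2$. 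A routine check shows $w\mapsto A(w)$ and $A\mapsto w_A$ are mutually inverse, so $f^{-1}(w_1w_2)$ is in bijection with the set of $p$-element subsets of $[m]$, and therefore $|f^{-1}(w_1w_2)|=\binom{m}{p}=m!/(p!\,q!)=|w_1w_2|!/(|w_1|!\,|w_2|!)$.

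The argument is entirely combinatorial and I do not anticipate any serious difficulty; the only place that needs a moment of care is the uniqueness assertion in (ii) — that $w^{(1)}w^{(2)}=w_1w_2$ forces $w^{(1)}=w_1$ and $w^{(2)}=w_2$ — which genuinely relies on $S\cap T=\varnothing$. The degenerate cases $w_1=\varepsilon$ or $w_2=\varepsilon$ are covered by the same formula (for instance $f^{-1}(\varepsilon)=\{\varepsilon\}$ and $\binom{0}{0}=1$).
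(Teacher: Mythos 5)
Your proof is correct and follows exactly the same idea as the paper's (one-sentence) proof: $f^{-1}(w_1w_2)$ is the set of shuffles of $w_1$ and $w_2$, counted by choosing the $|w_1|$ positions occupied by letters of $S$ among the $|w_1w_2|$ positions. You merely spell out the bijection and the uniqueness of the decomposition $w^{(1)}w^{(2)}$, which the paper leaves implicit.
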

		\begin{proof}
			Since $|f^{-1}(w_1w_2)|$ is equal to the number of all the methods for selecting $|w_1|$ positions from $|w_1w_2|$ different positions. we have $|f^{-1}(w_1w_2)|=|w_1w_2|!/(|w_1|!|w_2|!)$.    
		\end{proof}
		\begin{theorem}\label{g1g2prop}
			Let $G=G_1\times G_2$, where $G_1$ and $G_2$ are any finitely generated groups with finite number of conjugacy classes. Generating sets of $G_1,G_2$ and $G$ are $S,T$ and $S\sqcup T$ respectively.
			
			\begin{enumerate}
				\item[{\rm(a)}]
				If both $G_1$ and $G_2$ have an {\rm ISS}, then $G$ has an {\rm ISS}.
				\item[{\rm(b)}]
				If both $G_1$ and $G_2$ have a lower triangular {\rm ISM}, then $G$ has a lower triangular {\rm ISM}.
			\end{enumerate}
		\end{theorem}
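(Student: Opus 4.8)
The plan is to express the signature vectors of $G=G_1\times G_2$ explicitly in terms of those of $G_1$ and $G_2$, and then to recognize the resulting independent signature matrix as a scalar rescaling of a Kronecker product. First I would fix the orderings $C_{1,1},\dots,C_{1,p}$ and $C_{2,1},\dots,C_{2,q}$ of the conjugacy classes of $G_1$ and $G_2$ coming with the ISSs (resp.\ the lower triangular ISMs) in the hypothesis, and order the conjugacy classes $C_{1,i}\times C_{2,j}$ of $G$ — available by the lemma identifying the conjugacy classes of a direct product — lexicographically in the pair $(i,j)$. For a signature $\alpha$ of $G_1$ and a signature $\beta$ of $G_2$, write $(\alpha,\beta)$ for the corresponding signature of $G$. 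The crucial point is that, since the letters of $S$ commute with the letters of $T$ inside $G_1\times G_2$, one has $\phi(w)=\bigl(\phi_1(w^{(1)}),\phi_2(w^{(2)})\bigr)$ for every $w\in M(S\sqcup T)$ in the notation of Lemma~\ref{countlm}, where $\phi\colon M(S\sqcup T)\to G$, $\phi_1\colon M(S)\to G_1$, and $\phi_2\colon M(T)\to G_2$ are the structural homomorphisms. Consequently $\phi(w)\in C_{1,i}\times C_{2,j}$ if and only if $\phi_1(w^{(1)})\in C_{1,i}$ and $\phi_2(w^{(2)})\in C_{2,j}$, so the map $f$ of Lemma~\ref{countlm} restricts to a surjection from $E_{(\alpha,\beta)}\cap\phi^{-1}(C_{1,i}\times C_{2,j})$ onto $\{w_1w_2:w_1\in E_\alpha\cap\phi_1^{-1}(C_{1,i}),\ w_2\in E_\beta\cap\phi_2^{-1}(C_{2,j})\}$, all of whose fibers have the same size $\binom{|\alpha|+|\beta|}{|\alpha|}$ by that lemma. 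Counting then yields
\[
  \bigl|E_{(\alpha,\beta)}\cap\phi^{-1}(C_{1,i}\times C_{2,j})\bigr|
  =\binom{|\alpha|+|\beta|}{|\alpha|}\,\bigl|E_\alpha\cap\phi_1^{-1}(C_{1,i})\bigr|\,\bigl|E_\beta\cap\phi_2^{-1}(C_{2,j})\bigr|,
\]
i.e.\ $V_{(\alpha,\beta)}$ is the Kronecker product $V_\alpha\otimes V_\beta$ scaled by the nonzero integer $\binom{|\alpha|+|\beta|}{|\alpha|}$.

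With this formula in hand, part (a) follows quickly. Let $\alpha_1,\dots,\alpha_p$ and $\beta_1,\dots,\beta_q$ be ISSs of $G_1$ and $G_2$, with ISMs $A$ and $B$. I would take the $pq$ signatures $(\alpha_k,\beta_l)$, listed lexicographically in $(k,l)$; by the displayed formula the corresponding matrix for $G$ is $D\,(A\otimes B)$, where $D$ is the diagonal matrix of the (nonzero) binomial coefficients. Since $\det\bigl(D(A\otimes B)\bigr)=\bigl(\prod_{k,l}\binom{|\alpha_k|+|\beta_l|}{|\alpha_k|}\bigr)(\det A)^{q}(\det B)^{p}\neq 0$, these rows are linearly independent, so $G$ has an ISS. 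For part (b), if moreover $A$ and $B$ are lower triangular, then $A\otimes B$ is block lower triangular with diagonal blocks $A_{kk}B$, hence lower triangular; left multiplication by the diagonal matrix $D$ preserves this, so $D(A\otimes B)$ is a lower triangular ISM of $G$. Here it is important that the lexicographic order used on the signatures $(\alpha_k,\beta_l)$ matches the one used on the conjugacy classes $C_{1,i}\times C_{2,j}$, so that the Kronecker block pattern is genuinely triangular.

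The only step that is not routine bookkeeping is the identity $\phi(w)=(\phi_1(w^{(1)}),\phi_2(w^{(2)}))$ together with the observation that $f$ respects the grading by conjugacy classes — both of which rest on the fact that $\phi(w)$ depends on $w$ only through the word $w^{(1)}\in M(S)$ and the word $w^{(2)}\in M(T)$, and that $f$ is exactly the map that separates these two while preserving their internal order. Granting that, the remainder is the elementary linear algebra of Kronecker products (in particular that rescaling rows by nonzero scalars preserves both invertibility and lower-triangularity), plus the uniform fiber count already supplied by Lemma~\ref{countlm}.
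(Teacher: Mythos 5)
Your proposal is correct and follows essentially the same route as the paper: both establish the identity $V_{(\alpha,\beta)}=\binom{|\alpha|+|\beta|}{|\alpha|}\,V_\alpha\otimes V_\beta$ via the fiber count of Lemma \ref{countlm}, then conclude by the determinant formula for a Kronecker product rescaled by a diagonal matrix of nonzero binomial coefficients, with the lexicographic ordering of classes and signatures giving the lower-triangular structure in part (b). No substantive differences.
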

		\begin{proof}
			Let $C_{i,1},\ldots,C_{i,r_i}$ be all conjugacy classes of $G_i$ for $i=1,2$. Suppose $\alpha_1,\ldots,\alpha_{r_1}$ is an {\rm ISS} of $G_1$; $\beta_1,\ldots,\beta_{r_2}$ is an {\rm ISS} of $G_2$. Let $\alpha_i||\beta_j$ denote the signature spliced by $\alpha_i$ and $\beta_j$, for example, $(1,2)||(3,4)\coloneqq(1,2,3,4)$. Consider signature sequence 
			\begin{equation}\label{g1g2sig}
				\alpha_1||\beta_1,\ldots,\alpha_1||\beta_{r_2},\ldots,\alpha_{r_1}||\beta_1,\ldots,\alpha_{r_1}||\beta_{r_2}.
			\end{equation} 
			We will prove that it is an {\rm ISS} of $G$. For each $\alpha_i$ and $\beta_j$, we have
			\begin{equation*}
				E_{\alpha_i||\beta_j}=\bigsqcup_{\substack{u_1\in E_{\alpha_i}\\ u_2\in E_{\beta_j}}}f^{-1}(u_1u_2)
			\end{equation*}
			where $f$ is as defined in Lemma \refeq{countlm}. Let $V_{\alpha_i}=(x_{i,1},\ldots,x_{i,r_1})$ and $V_{\beta_j}=(y_{j,1},\ldots,y_{j,r_2})$. For each conjugacy class $C_{1,k}\times C_{2,l}$, it follows that
			\begin{equation*}
				\begin{aligned}
					&|E_{\alpha_i||\beta_j}\cap \phi^{-1}(C_{1,k}\times C_{2,l})|\\
					=&|\bigsqcup_{\substack{u_1\in E_{\alpha_i}\\ u_2\in E_{\beta_j}}}(f^{-1}(u_1u_2)\cap \phi^{-1}(C_{1,k}\times C_{2,l}))|
					\\
					=&\sum_{\substack{u_1\in E_{\alpha_i}\\ u_2\in E_{\beta_j}}}|f^{-1}(u_1u_2)\cap \phi^{-1}(C_{1,k}\times C_{2,l})|\\
					=&\sum_{\substack{u_1\in E_{\alpha_i}\cap \phi^{-1}(C_{1,k})\\ u_2\in E_{\beta_j}\cap \phi^{-1}(C_{2,l})}}|f^{-1}(u_1u_2)|
					\\
					=&\sum_{\substack{u_1\in E_{\alpha_i}\cap \phi^{-1}(C_{1,k})\\ u_2\in E_{\beta_j}\cap \phi^{-1}(C_{2,l})}}\frac{(|\alpha_i|+|\beta_j|)!}{|\alpha_i|!|\beta_j|!}\\
					=&|E_{\alpha_i}\cap \phi^{-1}(C_{1,k})|\cdot|E_{\beta_j}\cap \phi^{-1}(C_{2,l})|\frac{(|\alpha_i|+|\beta_j|)!}{|\alpha_i|!|\beta_j|!}
					\\
					=&\frac{(|\alpha_i|+|\beta_j|)!}{|\alpha_i|!|\beta_j|!}x_{i,k}y_{j,l}.
				\end{aligned}
			\end{equation*}
			Arrange all the conjugacy classes in $G$ as
			\begin{equation*}
				C_{1,1}\times C_{2,1},\ldots,C_{1,1}\times C_{2,r_2},\ldots,C_{1,r_1}\times C_{2,1},\ldots,C_{1,r_1}\times C_{2,r_2}.
			\end{equation*}
			Then we have
			\begin{equation*}
				V_{\alpha_i||\beta_j}= \frac{(|\alpha_i|+|\beta_j|)!}{|\alpha_i|!|\beta_j|!}(x_{i,1}y_{j,1},\ldots,x_{i,1}y_{j,r_2},\ldots,x_{i,r_1}y_{j,1},\ldots,x_{i,r_1}y_{j,r_2}).
			\end{equation*}
			Let $M_{i,j}\coloneqq(|\alpha_i|+|\beta_j|)!/(|\alpha_i|!|\beta_j|!)$, we have
			\begin{equation}\label{csmeq}
			\begin{gathered}
				\begin{pmatrix}
					V_{\alpha_1||\beta_1}\\
					\vdots\\
					V_{\alpha_1||\beta_{r_2}}\\
					\vdots\\
					V_{\alpha_{r_1}||\beta_1}\\
					\vdots\\
					V_{\alpha_{r_1}||\beta_{r_2}}\\
				\end{pmatrix}      	
				=
				\begin{pmatrix}
					M_{1,1}&~&~&~&~&~&~\\
					~&\ddots&~&~&~&~&~\\
					~&~&M_{1,r_2}&~&~&~&~\\
					~&~&~&\ddots&~&~&~\\
					~&~&~&~&M_{r_1,1}&~&~\\
					~&~&~&~&~&\ddots&~\\
					~&~&~&~&~&~&M_{r_1,r_2}\\
				\end{pmatrix}
				\boldsymbol{\cdot}\\
				\begin{pmatrix}	
					V_{\alpha_1}\\
					\vdots\\
					V_{\alpha_{r_1}}\\   	
				\end{pmatrix}			
				\bigotimes
				\begin{pmatrix}	
					V_{\beta_1}\\
					\vdots\\
					V_{\beta_{r_2}}\\    	
				\end{pmatrix}.	
			\end{gathered}
		    \end{equation}
			Then
			\begin{equation*}
				{\rm det}
				\begin{pmatrix}
					V_{\alpha_1||\beta_1}\\
					\vdots\\
					V_{\alpha_{r_1}||\beta_{r_2}}\\
				\end{pmatrix} 
				=
				(\prod_{i=1}^{r_1}\prod_{j=1}^{r_2}M_{i,j})
				{\rm det}^{r_2}
				\begin{pmatrix}	
					V_{\alpha_1}\\
					\vdots\\
					V_{\alpha_{r_1}}\\   	
				\end{pmatrix}
				{\rm det}^{r_1}
				\begin{pmatrix}	
					V_{\beta_1}\\
					\vdots\\
					V_{\beta_{r_2}}\\    	
				\end{pmatrix}			
				\neq 0.
			\end{equation*}
			Hence sequence \eqref{g1g2sig} is an {\rm ISS} of $G$.
			
			If the corresponding {\rm ISMs} of $\alpha_1,\ldots,\alpha_{r_1}$ and $\beta_1,\ldots,\beta_{r_2}$ are both lower triangular matrices, then the corresponding {\rm ISM} of \eqref{g1g2sig} is also a lower triangular matrix by equation \eqref{csmeq}.
		\end{proof}
		
		By Theorem \refeq{g1g2prop}, we immediately obtain the result for the case of  the direct product of a finite number of groups.
		
		\begin{corollary}\label{dpc}
			Let $G=G_1\times\cdots\times G_m$, where $G_1,\ldots,G_m$ are any finitely generated groups with finite number of conjugacy classes. Generating sets of $G_1,\ldots,G_m$ and $G$ are $S_1,\ldots,S_m$ and $S=\bigsqcup_{i=1}^mS_i$ respectively.
			\begin{enumerate}
				\item[{\rm(a)}]
				If each $G_i$ has an {\rm ISS}, then $G$ has an {\rm ISS}.
				\item[{\rm(b)}]
				If each $G_i$ has a lower triangular {\rm ISM}, then $G$ has a lower triangular {\rm ISM}.
			\end{enumerate}
		\end{corollary}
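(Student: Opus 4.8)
The plan is a straightforward induction on $m$, with Theorem \ref{g1g2prop} serving both as the base case ($m=2$) and as the engine of the inductive step; the case $m=1$ is vacuous. For the step, suppose the claim holds for direct products of fewer than $m$ factors, and write $H \coloneqq G_1 \times \cdots \times G_{m-1}$, which is again a finitely generated group with finitely many conjugacy classes, equipped with the generating set $T \coloneqq \bigsqcup_{i=1}^{m-1} S_i$. Then $G = H \times G_m$ and its generating set $S = T \sqcup S_m$ is exactly the one appearing in the hypothesis of Theorem \ref{g1g2prop} (after the harmless reindexing of the coordinates of $\mathbb{N}^n$, which permutes the entries of every signature vector simultaneously and hence preserves linear independence). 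For part (a): by the induction hypothesis $H$ has an ISS, and $G_m$ has an ISS by assumption, so Theorem \ref{g1g2prop}(a) yields an ISS for $G = H \times G_m$.

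For part (b) the only additional care needed is the bookkeeping of orderings. Theorem \ref{g1g2prop}(b), together with the explicit formula \eqref{csmeq}, produces a lower-triangular ISM for $G_1 \times G_2$ only relative to the ``block'' ordering of conjugacy classes $C_{1,1}\times C_{2,1},\ldots,C_{1,1}\times C_{2,r_2},\ldots,C_{1,r_1}\times C_{2,r_2}$ determined by the chosen orderings on the two factors. I would first record that iterating this construction is associative: forming the block ordering on $H = G_1 \times \cdots \times G_{m-1}$ and then pairing it with $G_m$ gives precisely the lexicographic ordering on the $m$-tuples $(C_{1,j_1},\ldots,C_{m,j_m})$ of conjugacy classes. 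In particular the induction hypothesis supplies a lower-triangular ISM for $H$ with respect to this lexicographic ordering, and applying Theorem \ref{g1g2prop}(b) to $H \times G_m$ then gives a lower-triangular ISM for $G$ with respect to the lexicographic ordering on $m$-tuples, completing the induction.

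I do not anticipate any genuine obstacle: the mathematical content is entirely in Theorem \ref{g1g2prop}, and what remains is the routine verification that the signature-coordinate ordering and the conjugacy-class ordering compose correctly under an iterated direct product. One could alternatively bypass the associativity remark and argue the $m$-fold statement in one stroke --- choosing ISS's $\alpha^{(1)},\ldots,\alpha^{(m)}$ for the factors, splicing them in all $r_1\cdots r_m$ ways, and observing that the resulting ISM is, up to the diagonal multinomial-coefficient matrix of \eqref{csmeq}, the $m$-fold Kronecker product of the factors' ISMs --- but the inductive argument is shorter and reuses exactly what has already been proved.
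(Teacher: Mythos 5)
Your proposal is correct and matches the paper's approach: the paper derives Corollary \ref{dpc} immediately from Theorem \ref{g1g2prop} by iterating the two-factor case, which is exactly your induction on $m$. Your additional remarks on the compatibility of the block/lexicographic orderings of conjugacy classes are a sound elaboration of bookkeeping the paper leaves implicit.
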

		\begin{remark}
			By Proposition \refeq{keyprop} and Corollary \refeq{dpc}, to prove Theorem \refeq{MT}, it suffices to show each irreducible finite Coxeter group has an {\rm ISS} with a lower triangular {\rm ISM}.
		\end{remark}
		
		\section{For classical types}
		\hspace{1.5em}In this section, we prove that each Coxeter group of classical types has an {\rm ISS} with a lower triangular {\rm ISM}. The Coxeter diagrams of classical types are as follows.
		\begin{table}[H]
			\centering
			\caption{Coxeter diagrams of classical types}
			\scalebox{1}{
				\begin{tabular}{l m{7.5cm}<{\centering}}\label{table1}
					\ \ type & diagram \\
					\hline
					\multirow{2.4}{*}{\,${\rm I_2(m)(m\geq 5)}$} &
					\multirow{3}{*}{
						{
							\begin{tikzpicture}[scale=0.8]
								\draw
								[line width=0.5pt] 
								(0,0) circle (0.1)
								(1.5,0) circle (0.1)     
								(0.1,0)--(1.4,0)
								node at (0,-0.4) {1}
								node at (1.5,-0.4) {2}
								node at (0.75,0.3) {$m$}
								;
							\end{tikzpicture}
						}
					}
					\\~&~
					\\
					\multirow{2.2}{*}{${\rm A_n(n\geq 1)}$} &
					\multirow{3}{*}{
						{
							\begin{tikzpicture}[scale=0.8]
								\draw
								[line width=0.5pt] 
								(0,0) circle (0.1)
								(1.5,0) circle (0.1)     
								(3,0) circle (0.1)
								(5,0) circle (0.1)
								(6.5,0) circle (0.1)
								(0.1,0)--(1.4,0)
								(1.6,0)--(2.9,0)
								(3.1,0)--(3.5,0)
								(4.5,0)--(4.9,0)
								(5.1,0)--(6.4,0)
								node at (0,-0.4) {1}
								node at (1.5,-0.4) {2}
								node at (3,-0.4) {3}
								node at (5,-0.4) {$n-1$}
								node at (6.5,-0.4) {$n$}
								node at (4.03,0) {$\cdots$}
								;
							\end{tikzpicture}
						}
					}
					\\~&~
					\\
					\multirow{2.2}{*}{${\rm B_n(n\geq 2)}$} &
					\multirow{3}{*}{
						{
							\begin{tikzpicture}[scale=0.8]
								\draw
								[line width=0.5pt] 
								(0,0) circle (0.1)
								(1.5,0) circle (0.1)     
								(3.5,0) circle (0.1)
								(5,0) circle (0.1)
								(6.5,0) circle (0.1)
								(0.1,0)--(1.4,0)
								(1.6,0)--(2,0)
								(3,0)--(3.4,0)
								(3.6,0)--(4.9,0)
								(5.09,-0.05)--(6.41,-0.05)
								(5.09,0.05)--(6.41,0.05)
								node at (0,-0.4) {1}
								node at (1.5,-0.4) {2}
								node at (3.5,-0.4) {$n-2$}
								node at (5,-0.4) {$n-1$}
								node at (6.5,-0.4) {$n$}
								node at (2.53,0) {$\cdots$}
								;
							\end{tikzpicture}
						}
					}
					\\~&~
					\\
					\multirow{7.1}{*}{${\rm D_n(n\geq 4)}$} &
					\multirow{5.5}{*}{
						{
							\begin{tikzpicture}[scale=0.8]
								\draw
								[line width=0.5pt] 
								(0,0) circle (0.1)
								(1.5,0) circle (0.1)     
								(3.5,0) circle (0.1)
								(5,0) circle (0.1)
								(6.5,0) circle (0.1)
								(5,1.5) circle (0.1)
								(0.1,0)--(1.4,0)
								(1.6,0)--(2,0)
								(3,0)--(3.4,0)
								(3.6,0)--(4.9,0)
								(5.1,0)--(6.4,0)
								(5,0.11)--(5,1.4)
								node at (0,-0.4) {1}
								node at (1.5,-0.4) {2}
								node at (3.5,-0.4) {$n-3$}
								node at (5,-0.4) {$n-2$}
								node at (6.5,-0.4) {$n$}
								node at (4.2,1.5) {$n-1$}
								node at (2.53,0) {$\cdots$}
								;
							\end{tikzpicture}
						}
					}
					\\~&~
					\\~&~
					\\~&~
					\\~&~
					\\
				\end{tabular}
			}
		\end{table}
		Our proof is based on the following lemma.
		\begin{lemma}\label{csslm}
			Let $\alpha_i=(a_{i,1},\ldots,a_{i,n})\in \mathbb{C}^n$, where $i\in[m]$ and $m\geq n$. Let $\gamma(\alpha_i)\coloneqq\{j:a_{i,j}\neq0\}$. If the following holds:
			\begin{enumerate}
				\item [\rm (a)]
				$|\gamma(\alpha_1)|=1$,
				\item [\rm (b)]
				$|\bigcup_{i=1}^{m}\gamma(\alpha_i)|=n$,
				\item [\rm (c)]
				$|\gamma(\alpha_k)\setminus\bigcup_{i=1}^{k-1}\gamma(\alpha_i)|=0\ {\rm or}\ 1$ for each $k\in\{2,\ldots,m\}$.
			\end{enumerate}
			Then there is a subsequence $\alpha_{p_1},\ldots,\alpha_{p_n}$ that satisfies:
			\begin{enumerate}
				\item [\rm (1)]
				$\alpha_{p_1},\ldots,\alpha_{p_n}$ is linearly independent,
				\item [\rm (2)]
				the matrix 
				\begin{equation*}
					\left(\begin{array}{c}
						\alpha_{p_1}\\
						\vdots\\
						\alpha_{p_n}\\
					\end{array}\right)
				\end{equation*}
				can be transformed into a lower triangular matrix by permuting the columns.
			\end{enumerate}
			
		\end{lemma}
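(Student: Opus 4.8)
The plan is a greedy construction driven by the order in which coordinates first enter the supports $\gamma(\alpha_i)$. For $k\in\{0,1,\ldots,m\}$ put $U_k:=\bigcup_{i=1}^{k}\gamma(\alpha_i)$, so $U_0=\varnothing$. Hypothesis (a) says $|U_1|=1$, hypothesis (c) says $|U_k|-|U_{k-1}|\in\{0,1\}$ for $k\geq 2$, and hypothesis (b) says $|U_m|=n$. Hence $|U_0|,|U_1|,\ldots,|U_m|$ rises from $0$ to $n$ in steps of size $0$ or $1$, and I would set
\begin{equation*}
	p_t:=\min\{k:|U_k|\geq t\}\qquad(t\in[n]).
\end{equation*}
A short check using the step bound shows $p_1<p_2<\cdots<p_n$ and that at each index $p_t$ exactly one new coordinate appears, i.e. $U_{p_t}\setminus U_{p_t-1}=\{c_t\}$ for a single $c_t\in\gamma(\alpha_{p_t})$; in particular $p_1=1$ and $c_1$ is the unique element of $\gamma(\alpha_1)$. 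Since the $c_t$ are pairwise distinct and $|U_{p_n}|=|U_m|=n$, the tuple $(c_1,\ldots,c_n)$ is a permutation of $[n]$.

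I then claim the subsequence $\alpha_{p_1},\ldots,\alpha_{p_n}$ works. Permute the columns of the matrix with rows $\alpha_{p_1},\ldots,\alpha_{p_n}$ so that the $u$-th column of the new matrix $B=(b_{t,u})$ is the old column $c_u$, i.e. $b_{t,u}=a_{p_t,c_u}$. The diagonal entry $b_{t,t}=a_{p_t,c_t}$ is nonzero because $c_t\in\gamma(\alpha_{p_t})$. For $u>t$ we have $p_u>p_t$, so $p_u-1\geq p_t$ and therefore $c_u\notin U_{p_u-1}\supseteq U_{p_t}\supseteq\gamma(\alpha_{p_t})$, whence $b_{t,u}=a_{p_t,c_u}=0$. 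Thus $B$ is lower triangular with $\det B=\prod_{t=1}^{n}a_{p_t,c_t}\neq 0$. Since $B$ is obtained from the matrix of $\alpha_{p_1},\ldots,\alpha_{p_n}$ by a column permutation, that matrix is nonsingular too, giving conclusion (1), and (2) is exactly the statement that the permutation $u\mapsto c_u$ of the columns produces the lower triangular matrix $B$.

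I do not expect a genuine obstacle: the content is entirely combinatorial bookkeeping. The two points that need care are the verification that $p_1<\cdots<p_n$ (which uses that $|U_k|$ increases by at most one at each step) and the inclusion chain $\gamma(\alpha_{p_t})\subseteq U_{p_t}\subseteq U_{p_u-1}$ for $u>t$; both are immediate from the monotonicity of $k\mapsto U_k$ together with the definition of the $p_t$.
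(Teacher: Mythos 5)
Your proposal is correct and follows essentially the same route as the paper: the indices $p_t$ you select via $\min\{k:|U_k|\geq t\}$ are exactly the indices at which the union of supports strictly grows, which is the set the paper takes directly, and your $c_t$ are the paper's newly appearing coordinates $q_r$ used to permute the columns. The verification of lower triangularity via $\gamma(\alpha_{p_t})\subseteq U_{p_t}\subseteq U_{p_u-1}$ for $u>t$ matches the paper's argument that $a_{p_i,q_r}=0$ for $i<r$.
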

		\begin{proof}
			Let
			\begin{equation*}
				A=\{\alpha_k:k\geq2, |\gamma(\alpha_k)\setminus\bigcup_{i=1}^{k-1}\gamma(\alpha_i)|=1\}\cup\{\alpha_1\}.
			\end{equation*} 
			By (a), (b) and (c), we have $|A|=n$. Let $A=\{\alpha_{p_1},\ldots,\alpha_{p_n}\}$ where $p_1<\cdots<p_n$. For each $r\in[n]$, let $\gamma(\alpha_{p_r})\setminus\bigcup_{i=1}^{p_r-1}\gamma(\alpha_i)=\{q_r\}$, we have $a_{p_r,q_r}\neq 0$ and $a_{p_i,q_r}=0$ for each $i< r$. Thus, by permuting the columns of the matrix $(a_{p_i,j})_{i,j\in[n]}$ according to $q_1,\ldots, q_n$, we obtain an invertible lower triangular matrix $(a_{p_i,q_j})_{i,j\in[n]}$. Therefore, for the sequence $\alpha_{p_1},\ldots,\alpha_{p_n}$, properties (1) and (2) hold.
		\end{proof}  
		
		\begin{theorem}\label{clsp1}
			Let $W$ be a finite Coxeter group of type ${\rm I_2(m)}$ with $S=\{g_1,g_2\}$ as its Coxeter generating set. Then $W$ has an {\rm ISS} with a lower triangular {\rm ISM}.
		\end{theorem}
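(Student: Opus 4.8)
The plan is to use the concrete structure of $W=W({\rm I_2(m)})$, the dihedral group of order $2m$, and to write down an {\rm ISS} explicitly. Put $r=g_1g_2$, so that every element of $W$ is uniquely of the form $r^c$ or $r^cg_2$ with $c\in\mathbb{Z}/m\mathbb{Z}$, and recall the conjugacy classes: for $m$ odd they are $\{1\}$, the pairs $\{r^{\pm j}\}$ for $1\le j\le(m-1)/2$, and the single class consisting of all reflections, so $r=(m+3)/2$; for $m$ even they are $\{1\}$, $\{r^{m/2}\}$, the pairs $\{r^{\pm j}\}$ for $1\le j\le m/2-1$, and the two reflection classes $\{r^cg_2:c\text{ even}\}$ and $\{r^cg_2:c\text{ odd}\}$, so $r=m/2+3$.

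First I would prove a counting lemma for $E_\alpha$. From $g_1=rg_2$ and $g_2g_1=r^{-1}$ one reads off the rewriting rules: right multiplication of $r^cg_2^{\delta}$ by $g_2$ toggles $\delta$ and fixes $c$, while right multiplication by $g_1$ toggles $\delta$ and sends $c\mapsto c+(-1)^{\delta}$. Evaluating a word $w=s_{i_1}\cdots s_{i_k}$ of signature $\alpha=(a,b)$ from left to right starting at $1$, the value of $\delta$ just before the $\ell$-th letter equals $(\ell-1)\bmod 2$; hence, writing $P\subseteq[k]$ for the set of positions occupied by $g_1$ (so $|P|=a$ and $k=a+b$), one gets $\phi(w)=r^{c}g_2^{\,k\bmod 2}$ with $c=\sum_{\ell\in P}(-1)^{\ell+1}=p-q$, where $p$ and $q$ count the odd, respectively even, positions in $P$. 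In particular $c\equiv a\pmod 2$ and $|c|\le\min(a,b)$, and when $a+b$ is even the number of $w\in E_\alpha$ with a prescribed such $c$ equals $\binom{(a+b)/2}{(a+c)/2}\binom{(a+b)/2}{(a-c)/2}$, which is strictly positive for every admissible $c$ and in particular at $c=\pm\min(a,b)$; so for such $\alpha$ the rotation $r^{\pm\min(a,b)}$ is genuinely hit, and so are the intermediate rotations $r^{c}$ with $c\equiv a\pmod2$, $|c|<\min(a,b)$.

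Next I would build the {\rm ISS}. For $m$ odd, order the conjugacy classes as: the reflection class; then $\{1\}$; then the even-exponent rotation classes $\{r^{\pm2}\},\{r^{\pm4}\},\dots$ in increasing order; then the odd-exponent rotation classes $\{r^{\pm1}\},\{r^{\pm3}\},\dots$ in increasing order. In the same order, take the signatures $(1,0)$, $(2,0)$, then $(2,2),(4,4),\dots$, then $(1,1),(3,3),\dots$. By the lemma, $V_{(1,0)}$ is supported on the reflection class, $V_{(2,0)}$ on $\{1\}$, $V_{(2k,2k)}$ on $\{1\},\{r^{\pm2}\},\dots,\{r^{\pm2k}\}$ with nonzero $\{r^{\pm2k}\}$-entry, and $V_{(2k-1,2k-1)}$ on $\{r^{\pm1}\},\dots,\{r^{\pm(2k-1)}\}$ with nonzero $\{r^{\pm(2k-1)}\}$-entry; no wrap-around occurs because the exponents involved never exceed $(m-1)/2<m$. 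Hence each signature vector lies in the span of the standard basis vectors indexed by classes no later than its own, with nonzero ``diagonal'' entry, so the {\rm ISM} with respect to this order is lower triangular with nonzero diagonal, hence invertible; and a short count shows the list has exactly $(m+3)/2=r$ signatures. For $m$ even the only changes are: list the two reflection classes first, realized by $(0,1)$ and $(1,0)$ (these fall into the even- and odd-exponent reflection classes since $c\equiv a\pmod 2$); insert $\{r^{m/2}\}$ just after the even-exponent rotation classes if $m/2$ is even, and at the very end of the odd-exponent ones if $m/2$ is odd, realized by the signature $(m/2,m/2)$; and recount. All other verifications carry over verbatim.

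I expect the only real work to be the bookkeeping: checking that the two parity families of rotation classes can be listed consecutively in such a way that each signature $(k,k)$ ``sees'' only classes already placed together with the single new class on its diagonal, and keeping the three cases $m$ odd, $m\equiv0\pmod4$, $m\equiv2\pmod4$ straight so that the number of chosen signatures equals the number of conjugacy classes and $\{r^{m/2}\}$ is slotted in correctly. The conceptual core — the closed formula $\phi(w)=r^{\sum_{\ell\in P}(-1)^{\ell+1}}g_2^{\,|w|\bmod2}$ together with the positivity of the resulting binomial counts — is what guarantees that all of the required signature vectors have the triangular support we need.
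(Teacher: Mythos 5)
Your proposal is correct, but it takes a genuinely different route from the paper. The paper never touches the dihedral structure: it orders all signatures dominated by a chosen set of class representatives using the total order $(|\alpha|,a_1,a_2)\leq_{\rm lex}(|\beta|,b_1,b_2)$, and shows that each signature $\beta_k=(b_1,b_2)$ contributes at most one conjugacy class not already seen, via the reduction argument that any word with two equal adjacent letters can be shortened to a word of strictly smaller signature representing the same element, so that only the (at most two, mutually conjugate) alternating words in $E_{\beta_k}$ can produce something new; it then invokes its general extraction lemma (Lemma 4.1 in the paper) to pull out an ISS and reorder the classes to get lower triangularity. You instead compute $\phi(w)=r^{\,p-q}g_2^{\,|w|\bmod 2}$ explicitly for every word, identify exactly which classes each signature hits together with the multiplicities $\binom{(a+b)/2}{(a+c)/2}\binom{(a+b)/2}{(a-c)/2}$, and write down the ISS, the class ordering, and the triangular ISM by hand. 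Your computations check out (including the key points that $|c|\le\min(a,b)$ rules out wrap-around since all exponents used are at most $\lfloor m/2\rfloor$, that the balanced signatures $(j,j)$ hit only rotations of exponent parity $j\bmod 2$, and that the class counts match in all parity cases). What the paper's approach buys is uniformity — it is the same template reused verbatim for types A, B, D and it sidesteps all parity bookkeeping and the explicit class ordering; what yours buys is completely explicit data (an explicit ISS, an explicit ordering of classes, and closed-form diagonal entries), at the cost of the three-way case analysis on $m\bmod 4$. One cosmetic point: you use the symbol $r$ both for the rotation $g_1g_2$ and for the number of conjugacy classes; this should be disambiguated in a final write-up.
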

		\begin{proof}
			We define a partial order on $\mathbb{N}^2$. Let $\alpha,\beta\in \mathbb{N}^2$, $\alpha=(a_1,a_2$), $\beta=(b_1,b_2)$. Let $\alpha\leq\beta$ if $(|\alpha|,a_1,a_2)\leq_{\rm lex}(|\beta|,b_1,b_2)$ where $\leq_{\rm lex}$ is the lexicographic order on $\mathbb{N}^3$. It can be seen that $\leq$ is a total order.
			\par
			Let $C_1,\ldots,C_r$ be all conjugacy classes of $W$. For each $i\in[r]$, take a word $w_i\in \phi^{-1}(C_i)$, let $\alpha_i={\rm sig}(w_i)$, let $H=\{w_1,\ldots,w_r\}$. Let
			\begin{equation*}
				A=\{\alpha\in \mathbb{N}^2:\exists i\ s.t.\ \alpha\leq\alpha_i\},
			\end{equation*}
			Clearly $|A|<\infty$, so let $A=\{\beta_1,\ldots, \beta_N\}$, where $\beta_1<\cdots<\beta_N$. Then we consider $\{V_{\beta_1},\ldots,V_{\beta_N}\}$ where $V_{\beta_i}$ defined in Definition \refeq{evdef}. Let $\gamma$ be as defined in Lemma \refeq{csslm}. Since $\beta_1=(0,0)$, we have $|\gamma(V_{\beta_1})|=1$. For each conjugacy class $C_h$, note that $w_h\in \phi^{-1}(C_h)$ and $\alpha_h={\rm sig}(w_h)\in A$, so $h\in\gamma(V_{\alpha_h})\in \bigcup_{i=1}^N\gamma(V_{\beta_i})$. For the arbitrariness of $h$, we have $|\bigcup_{i=1}^N\gamma(V_{\beta_i})|=r$. \par
			For each $k\in \{2,\ldots,N\}$, let $\beta_k=(b_1,b_2)$. If $|b_1-b_2|\geq 2$, then for each $h\in\gamma(V_{\beta_k})$, there exists a $w=t_1\cdots t_{|\beta_k|}\in \phi^{-1}(C_h)$, $t_i\in\{g_1,g_2\}$ such that ${\rm sig}(w)=\beta_k$ and $t_j=t_{j+1}$ for some $j$. Thus there is a $w'=t_1\cdots t_{j-1}t_{j+2}\cdots t_{|\beta_k|}$ such that $\phi(w')=\phi(w)$ and ${\rm sig}(w')<{\rm sig}(w)=\beta_k$. So ${\rm sig}(w')\in A$ and $h\in\gamma(V_{{\rm sig}(w')})\subseteq\bigcup_{i=1}^{k-1}\gamma(V_{\beta_i})$. For the arbitrariness of $h$, we have $|\gamma(V_{\beta_k})\setminus\bigcup_{i=1}^{k-1}\gamma(V_{\beta_i})|=0$.\par
			If $|b_1-b_2|=1$, suppose $b_2=b_1+1$. If $b_1=0$, then $|\gamma(V_{\beta_k})|=1$ and $|\gamma(V_{\beta_k})\setminus\bigcup_{i=1}^{k-1}\gamma(V_{\beta_i})|=0\ {\rm or}\ 1$. If $b_1>0$, then $g_2g_1\cdots g_2g_1g_2\in E_{\beta_k}$ and is conjugate to $g_1$ or $g_2$. For each $w=t_1\cdots t_{|\beta_k|}\in E_{\beta_k}\setminus\{g_2g_1\cdots g_2g_1g_2\}$ where $t_i\in\{g_1,g_2\}$, there is a $t_j=t_{j+1}$ for some $j$. Thus for each $h\in\gamma(V_{\beta_k})$, there are $w,w'\in M(S)$ such that ${\rm sig}(w')<{\rm sig}(w)=\beta_k$ and $\phi(w')$ is conjugate to $\phi(w)$. Hence $h\in\gamma(V_{{\rm sig}(w')})\subseteq\bigcup_{i=1}^{k-1}\gamma(V_{\beta_i})$. For the arbitrariness of $h$, we have $|\gamma(V_{\beta_k})\setminus\bigcup_{i=1}^{k-1}\gamma(V_{\beta_i})|=0$.\par
			If $b_1=b_2$, then $g_1g_2\cdots g_1g_2,g_2g_1\cdots g_2g_1\in E_{\beta_k}$. Let
			\begin{equation*}
				\tilde{E}_{\beta_k}\coloneqq E_{\beta_k}\setminus\{g_1g_2\cdots g_1g_2, g_2g_1 \cdots g_2g_1\}.
			\end{equation*}
			For each $w=t_1t_2\cdots t_{|\beta_k|} \in \tilde{E}_{\beta_k}$, there is a $t_j=t_{j+1}$ for some $j$. Let
			\begin{equation*}
				V'=(|\tilde{E}_{\beta_k}\cap \phi^{-1}(C_1)|,\ldots,|\tilde{E}_{\beta_k}\cap \phi^{-1}(C_r)|).
			\end{equation*}
			By the same argument in the previous cases, we have $|\gamma(V')\setminus\bigcup_{i=1}^{k-1}\gamma(V_{\beta_i})|=0$. Since $g_1g_2\cdots g_1g_2$ and $g_2g_1 \cdots g_2g_1$ are in the same conjugacy class, the number of conjugacy classes in $E_{\beta_k}$ is at most one more than the number of conjugacy classes in $\tilde{E}_{\beta_k}$, so $|\gamma(V_{\beta_k})\setminus\gamma(V')|=0\ {\rm or}\ 1$. Thus $|\gamma(V_{\beta_k})\setminus\bigcup_{i=1}^{k-1}\gamma(V_{\beta_i})|=0\ {\rm or}\ 1$.\par
			Consequently, by using Lemma \refeq{csslm}, there is a linearly independent subsequence $V_{\beta_{p_1}},\ldots,V_{\beta_{p_r}}$ of $V_{\beta_1},\ldots,V_{\beta_N}$, hence $\beta_{p_1},\ldots,\beta_{p_r}$ is an {\rm ISS} of $W$. Besides, with proper reordering of the conjugacy classes $C_1,\ldots,C_r$, the {\rm ISM} of $\beta_{p_1},\ldots,\beta_{p_r}$ can become a lower triangular matrix.
		\end{proof}
		
		For types ${\rm A,B\ \text{and}\ D}$, some good results are presented in \cite{CST}. We summarize them into the following proposition.

		\begin{proposition}[{\cite[Section 5, 6, 7]{CST}}]\label{ABDprop}
			Let $W$ be a finite Coxeter group of type ${\rm A_n}$, ${\rm B_n}$ or ${\rm D_{n+1}}$, and $S=\{g_1,g_2,\cdots\}$ is a set of Coxeter generators for $W$. Given a word $u\in M(S)$ and ${\rm sig}(u)=(a_1,\ldots,a_n)$ or $(a_1,\ldots,a_{n+1})$, let ${\rm ct}(u)\coloneqq(|u|,a_1,\ldots,a_{n-1})$.
			\begin{enumerate}
				\item[{\rm(a)}]
				For each word $w\in M(S)$, there exists a word $\tilde{w}$ such that $\phi(w)$ is conjugate to $\phi(\tilde{w})$ and ${\rm ct}(w)\geq_{\rm lex}{\rm ct}(\tilde{w})$ where $\leq_{\rm lex}$ is the lexicographic order on $\mathbb{N}^n$.
				\item[{\rm(b)}]
				Let words $w_1,w_2\in M(S)$, if ${\rm sig}(w_1)={\rm sig}(w_2)$ and ${\rm ct}(w_1)={\rm ct}(\tilde{w}_1)={\rm ct}(\tilde{w}_2)={\rm ct}(w_2)$, then $\phi(w_1)$ is conjugate to $\phi(w_2)$.
	    	\end{enumerate}
		\end{proposition}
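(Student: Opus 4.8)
\noindent The plan is to handle the three families ${\rm A_n}$, ${\rm B_n}$, ${\rm D_{n+1}}$ through their standard combinatorial models: $W({\rm A_n})\cong S_{n+1}$ acting on $\{1,\dots,n+1\}$ with $g_i=(i,i+1)$; $W({\rm B_n})$ the group of signed permutations of $\{1,\dots,n\}$ with $g_i=(i,i+1)$ for $i<n$ and $g_n$ the sign change on the last coordinate; and $W({\rm D_{n+1}})$ the evenly-signed permutations of $\{1,\dots,n+1\}$ with $g_1,\dots,g_{n-1}$ the transpositions $(i,i+1)$ and $g_n,g_{n+1}$ the two fork generators of the diagram (one ordinary and one signed transposition), which are exactly the generators whose counts ${\rm ct}$ discards. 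In each case the conjugacy classes are parametrized by (signed) cycle types, and for each class $C$ I would fix a \emph{standard word} $w_C\in M(S)$: realize every cycle of $C$ as an increasing interval cycle $(a,a+1,\dots,b)=g_ag_{a+1}\cdots g_{b-1}$ (with the analogous top-loaded words for the negative cycles of types ${\rm B,D}$, which are forced to involve $g_n$, resp.\ $g_n$ and $g_{n+1}$), placed along the diagram with the fixed points first, the negative $1$-cycles pushed as far right as possible, and the remaining blocks ordered by increasing length. The proof then reduces to two assertions: (I) $w_C$ attains the minimum of $|w|$ over $\{w:\phi(w)\in C\}$, and among minimal-length words it attains the lexicographic minimum of $(a_1,\dots,a_{n-1})$; (II) $C$ can be reconstructed from ${\rm ct}(w_C)$ for types ${\rm A,B}$, and from ${\rm ct}(w_C)$ together with the pair $(a_n,a_{n+1})$ of $w_C$ for type ${\rm D}$.

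For part (a), given $w$ with $\phi(w)\in C$, I would first move $w$ to a minimal-length word of $C$ using only steps that do not increase ${\rm ct}$: deletion $g_ig_i\to\varepsilon$ (which strictly lowers $|w|$), cyclic rotation of the letters (conjugation by the first letter), and braid moves. Once $|w|$ is minimal in $C$, I would bound $a_1$ from below: $g_1$ is the unique generator moving the point $1$, so $a_1=0$ is attainable exactly when $C$ has a representative fixing $1$, i.e.\ a fixed point; if so, one conjugates into the parabolic subgroup $\langle g_2,\dots\rangle$, a Coxeter system of the same family of smaller rank, and recurses; if not, then in a minimal-length word the cycle through $1$ must be an interval cycle starting at $1$, it uses $g_1$ exactly once, and $w_C$ realizes this, after which one passes to the cycles on $\{2,3,\dots\}$ and recurses. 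Iterating over $a_1,a_2,\dots,a_{n-1}$ gives ${\rm ct}(w)\geq_{\rm lex}{\rm ct}(w_C)$, so one may take $\tilde w=w_C$; equality ${\rm ct}(w)={\rm ct}(\tilde w)$ holds precisely when $w$ already meets this lower bound.

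For part (b), suppose ${\rm sig}(w_1)={\rm sig}(w_2)$ and ${\rm ct}(w_i)={\rm ct}(\tilde w_i)={\rm ct}(w_{C_i})$, where $C_i$ is the class of $\phi(w_i)$. For types ${\rm A}$ and ${\rm B}$ the signature determines ${\rm ct}$ (because $a_n=|w|-\sum_{i<n}a_i$), so ${\rm ct}(w_{C_1})={\rm ct}(w_{C_2})$ and assertion (II) forces $C_1=C_2$: from ${\rm ct}(w_C)$ one reads the number of fixed points off the leading run of zeros, recovers the ordered block sizes from the positions of the remaining zeros together with the value of $|w|$, and thus reconstructs the (signed) cycle type (the negative cycles of type ${\rm B}$ being detected through the extra length they contribute). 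For type ${\rm D}$ one moreover uses ${\rm sig}(w_1)={\rm sig}(w_2)$ to match $a_n$ and $a_{n+1}$ individually, which is exactly the data needed to separate two ${\rm D}_{n+1}$-classes that lie over a single ${\rm B}_{n+1}$-class.

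The step I expect to be the crux is the lexicographic minimality of the signature inside a fixed-length conjugacy class in part (a): this goes strictly beyond the length-minimality provided by the general theory of minimal-length elements and requires the type-by-type ``push the generators toward the high-index end of the diagram'' analysis, in which the negative cycles of ${\rm B}$ and ${\rm D}$ (which are forced to engage $g_n$ and $g_{n+1}$) and the ${\rm D}_{n+1}$-classes that split relative to ${\rm B}_{n+1}$ make the bookkeeping delicate. This is precisely the analysis carried out in \cite[Sections 5, 6, 7]{CST}, and the present proof would follow that development while additionally recording the exact value of ${\rm ct}$ attained by the standard words, as needed for (II).
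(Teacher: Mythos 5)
The paper gives no proof of this proposition: it is stated as a summary of results from \cite[Sections 5, 6, 7]{CST} and is used as a black box in the proof of Theorem \ref{clsp2}, so there is no internal argument to measure your attempt against. Your sketch does reconstruct the strategy of the cited sections --- canonical minimal-length words attached to (signed) cycle types, lexicographic minimality of the truncated data $(|u|,a_1,\dots,a_{n-1})$, and recovery of the conjugacy class from that data --- and your reading of the statement is correct on the two points where misreading is easiest: $\tilde w$ must be a \emph{specific} canonical word (otherwise part (a) is vacuous with $\tilde w=w$), and the discarded coordinates in type ${\rm D_{n+1}}$ are exactly the two fork generators.

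As a standalone proof, however, the attempt has a genuine gap: the two assertions (I) and (II) to which you reduce everything are precisely the content of the proposition, and you do not establish them --- you explicitly defer to \cite{CST} for what you yourself identify as the crux. Two places where the deferral hides real work: for type ${\rm B}$, the claim that negative cycles are ``detected through the extra length they contribute'' requires the actual minimal-length computation for negative blocks, which is where the bookkeeping in \cite{CST} lives; and for type ${\rm D}$, your reconstruction step reads off $(a_n,a_{n+1})$ from $w_1$ and $w_2$, but to conclude $C_1=C_2$ you would need to know which values of $(a_n,a_{n+1})$ can occur among minimal-$\mathrm{ct}$ words of a given split class --- and that is essentially what part (b) asserts, so the argument as written is close to circular at exactly the delicate point. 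Since the paper itself only cites \cite{CST} for this statement, deferring to that reference is defensible, but then the proposal is a citation with commentary rather than a proof.
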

		
	    In \cite{CST}, a partial order on $M(S)$ is defined as $u_1<u_2$ if ${\rm ct}(u_1)<_{\rm lex}{\rm ct}(u_2)$. Similar to the proof of Theorem \refeq{clsp1}, we consider applying Lemma \refeq{csslm} for types ${\rm A,B\ \text{and}\ D}$. The partial order and the above proposition make the proof for types ${\rm A,B\ \text{and}\ D}$ available.
		\begin{theorem}\label{clsp2}
			Let $W$ be a finite Coxeter group of type ${\rm A_n}$, ${\rm B_n}$ or ${\rm D_{n+1}}$ with $S$ as its Coxeter generating set. Then $W$ has an {\rm ISS} with a lower triangular {\rm ISM}.
		\end{theorem}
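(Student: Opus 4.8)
The plan is to run the same machinery as in Theorem \ref{clsp1} for type $\mathrm{I}_2(m)$, but with the total order on signatures now induced by $\mathrm{ct}$ and with Proposition \ref{ABDprop} supplying the combinatorial input needed to verify the hypotheses of Lemma \ref{csslm}. First I would record that, for a word $u$ with signature $(a_1,\ldots)$, the length $|u|$ is the sum of the $a_i$, so ${\rm ct}(u)=(|u|,a_1,\ldots)$ depends only on ${\rm sig}(u)$ and conversely determines it; hence ${\rm ct}$ descends to signatures and $\alpha\prec\beta:\Leftrightarrow{\rm ct}(\alpha)<_{\rm lex}{\rm ct}(\beta)$ is a total order on the set of signatures, with the partial order of \cite{CST} on $M(S)$ being its pullback along ${\rm sig}$.

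Next, for each conjugacy class $C_i$ of $W$ I would use Proposition \ref{ABDprop}(a) to pick $w_i\in\phi^{-1}(C_i)$ with ${\rm ct}(w_i)$ minimal in $\leq_{\rm lex}$ among all words in $\phi^{-1}(C_i)$; since $\phi(\tilde{w}_i)\in C_i$ and ${\rm ct}(\tilde{w}_i)\leq_{\rm lex}{\rm ct}(w_i)$, minimality forces ${\rm ct}(w_i)={\rm ct}(\tilde{w}_i)$, so $w_i$ is ``reduced''. Put $\alpha_i={\rm sig}(w_i)$. If $\alpha_i=\alpha_j$ then $w_i$ and $w_j$ are reduced words of the same signature, so Proposition \ref{ABDprop}(b) gives $\phi(w_i)\sim\phi(w_j)$ and $i=j$; thus the $\alpha_i$ are distinct and each is the unique minimal signature of its class. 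Let $A=\{\alpha:\alpha\preceq\alpha_i\text{ for some }i\}$. This is finite (fixing $|\alpha|\leq|\alpha_i|$ leaves only finitely many signatures) and is a down-set for $\prec$; write $A=\{\beta_1\prec\cdots\prec\beta_N\}$ with $\beta_1=(0,\ldots,0)$, and note $N\geq r$ since the $r$ distinct $\alpha_i$ all lie in $A$. The candidate ISS will be extracted from $\beta_1,\ldots,\beta_N$.

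The heart of the argument is to check that $V_{\beta_1},\ldots,V_{\beta_N}$ (as in Definition \ref{evdef}) satisfies conditions (a)--(c) of Lemma \ref{csslm}, with $\gamma$ as there. Condition (a) is clear since $E_{\beta_1}=\{\varepsilon\}$ and $\phi(\varepsilon)=1$, so $|\gamma(V_{\beta_1})|=1$. Condition (b) holds because each $C_h$ contains $\phi(w_h)$ with ${\rm sig}(w_h)=\alpha_h\in A$, whence $h\in\gamma(V_{\alpha_h})\subseteq\bigcup_i\gamma(V_{\beta_i})$ and so $|\bigcup_i\gamma(V_{\beta_i})|=r$. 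Condition (c) is the delicate one: for $k\geq2$ I claim at most one index $h$ lies in $\gamma(V_{\beta_k})\setminus\bigcup_{i<k}\gamma(V_{\beta_i})$. If $h$ is such an index then $C_h$ contains a word of signature $\beta_k$ but, since $A$ is a down-set, none of signature $\prec\beta_k$; hence $\beta_k$ is the minimal signature of $C_h$, and for any $w$ of signature $\beta_k$ in $C_h$ minimality gives ${\rm ct}(\tilde{w})\geq_{\rm lex}{\rm ct}(\beta_k)$, so ${\rm ct}(w)={\rm ct}(\tilde{w})$ and $w$ is reduced. Were there two such indices $h\neq h'$, reduced words $w\in\phi^{-1}(C_h)$ and $w'\in\phi^{-1}(C_{h'})$ of common signature $\beta_k$ would be conjugate by Proposition \ref{ABDprop}(b), a contradiction. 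With (a)--(c) verified, Lemma \ref{csslm} yields a linearly independent subsequence $V_{\beta_{p_1}},\ldots,V_{\beta_{p_r}}$ whose matrix becomes lower triangular after a column permutation; reordering $C_1,\ldots,C_r$ accordingly exhibits $\beta_{p_1},\ldots,\beta_{p_r}$ as an ISS of $W$ with a lower triangular ISM.

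The step I expect to be the main obstacle is the verification of condition (c): it is exactly there that the full strength of Proposition \ref{ABDprop} enters — part (a) pushes every non-reduced word down to a strictly smaller, already-counted signature, while part (b) forces the conjugacy class first appearing at a given signature to be unique. By contrast, the finiteness of $A$, the identification of ${\rm ct}$ with ${\rm sig}$, and the book-keeping with the down-set property are routine, essentially as in the proof of Theorem \ref{clsp1}.
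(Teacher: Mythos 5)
Your proposal follows essentially the same route as the paper: build the down-set of signatures below class representatives under the $\mathrm{ct}$-induced order, verify hypotheses (a)--(c) of Lemma \ref{csslm} using Proposition \ref{ABDprop} (part (a) to push non-reduced words down to already-counted signatures, part (b) to show the reduced words of a fixed signature all land in one conjugacy class), and extract the ISS. The paper does exactly this, except that it works with the finite set $J$ of \emph{words} below the representatives and extends the $\mathrm{ct}$-partial order on $J$ to an arbitrary total order, whereas you work directly with signatures.

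That difference exposes the one inaccurate step in your write-up: the claim that $\mathrm{ct}(u)$ ``conversely determines'' $\mathrm{sig}(u)$, so that $\prec$ is a \emph{total} order on signatures. This is true for types ${\rm A_n}$ and ${\rm B_n}$ (where $\mathrm{ct}$ records $|u|$ and all but one coordinate, and $|u|$ recovers the last one), but false for type ${\rm D_{n+1}}$: there $\mathrm{sig}(u)$ has $n+1$ coordinates while $\mathrm{ct}(u)=(|u|,a_1,\ldots,a_{n-1})$ drops two of them, so only their sum $a_n+a_{n+1}$ is recoverable and distinct signatures can share the same $\mathrm{ct}$. Consequently your enumeration $A=\{\beta_1\prec\cdots\prec\beta_N\}$ is not well defined in type ${\rm D}$. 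The damage is local and repairable: enumerate $A$ by any total order refining the $\mathrm{ct}$-preorder (breaking ties arbitrarily, as the paper does at the level of words). Your verification of condition (c) survives this change verbatim, since it only uses that $\mathrm{ct}(\tilde w)<_{\rm lex}\mathrm{ct}(w)$ forces $\mathrm{sig}(\tilde w)$ to occur \emph{strictly earlier} in the enumeration, and that Proposition \ref{ABDprop}(b) applies to any two reduced words with equal signature (hence automatically equal $\mathrm{ct}$). With that patch the proof is correct.
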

		\begin{proof}
			Let $C_1,\ldots,C_r$ be all conjugacy classes of $W$. Take a word $v_i\in \phi^{-1}(C_i)$ for each $i\in[r]$, let $H=\{v_1,\ldots,v_r\}$ and $J=\{w:\exists v_i\in H\ s.t.\ w\leq v_i\}$. Clearly we have $|J|<\infty$, so $(J,\leq)$ can extend to a totally ordered set $(J,\widetilde{\leq})$ (i.e. $w_i\leq w_j$ implies $w_i\widetilde{\leq}w_j$). So let $J=\{w_1,\ldots,w_N\}$, where $w_1\widetilde{<}\cdots\widetilde{<}w_N$. Let $\alpha_i={\rm sig}(w_i)$ for each $i\in[r]$.\par
			Since $w_1=\varepsilon$, it follows that $|\gamma(V_{\alpha_1})|=1$. For each $h\in[r]$, we have $v_h\in \phi^{-1}(C_h)$ and $v_h\in J$. Let $\alpha_k={\rm sig}(v_h)$, then $h\in\gamma(V_{\alpha_k})\subseteq\bigcup_{i=1}^N\gamma(V_{\alpha_i})$. For the arbitrariness of $h$, we have $|\bigcup_{i=1}^N\gamma(V_{\alpha_i})|=r$.
			\par
			For each $k\in\{2,\ldots,N\}$, by Proposition \refeq{ABDprop} part (a), we divide $E_{\alpha_k}$ into two parts:
			\begin{equation*}
				\hat{E}_{\alpha_k}\coloneqq E_{\alpha_k}\cap\{u:{\rm ct}(\tilde{u})<_{\rm lex}{\rm ct}(u)\}
			\end{equation*}
			and
			\begin{equation*}
				\bar{E}_{\alpha_k}\coloneqq E_{\alpha_k}\cap\{u:{\rm ct}(\tilde{u})={\rm ct}(u)\}.
			\end{equation*}
			Let
			\begin{equation*}
				V'=(|\hat{E}_{\alpha_k}\cap \phi^{-1}(C_1)|,\ldots,|\hat{E}_{\alpha_k}\cap \phi^{-1}(C_r)|).
			\end{equation*}	
			For each $h\in\gamma(V')$, there exists $u_0\in \hat{E}_{\alpha_k}$ and $u_0\in \phi^{-1}(C_h)$. Since ${\rm ct}(\tilde{u}_0)<_{\rm lex}{\rm ct}(u_0)={\rm ct}(w_k)$, we have $\tilde{u}_0<w_k$ and thus $\tilde{u}_0\in J$. Let $\tilde{u}_0=w_l$, ${\rm sig}(\tilde{u}_0)=\alpha_l$, then $w_l\widetilde{<}w_k$ and then $l<k$. Since $\phi(\tilde{u}_0)$ and $\phi(u_0)$ are in the same conjugacy class, $h\in \gamma(V_{\alpha_l})\subseteq\bigcup_{i=1}^{k-1}\gamma(V_{\alpha_i})$. For the arbitrariness of $h$, we have $\gamma(V')\subseteq\bigcup_{i=1}^{k-1}\gamma(V_{\alpha_i})$.
			\par
			Now consider words in $\bar{E}_{\alpha_k}$. Let $u_1,u_2\in \bar{E}_{\alpha_k}$, we have ${\rm sig}(u_1)={\rm sig}(u_2)$ and ${\rm ct}(u_1)={\rm ct}(\tilde{u}_1)={\rm ct}(\tilde{u}_2)={\rm ct}(u_2)$, so $\phi(u_1)$ and $\phi(u_2)$ are in the same conjugacy class by Proposition \refeq{ABDprop} part (b). Thus the number of conjugacy classes in $E_{\alpha_k}$ is at most one more number than the number of conjugacy classes in $\hat{E}_{\alpha_k}$. So we have $|\gamma(V_{\alpha_k})\setminus\gamma(V')|=0\ {\rm or}\ 1$, and $|\gamma(V_{\alpha_k})\setminus\bigcup_{i=1}^{k-1}\gamma(V_{\alpha_i})|=0\ {\rm or}\ 1$ since $\gamma(V')\subseteq\bigcup_{i=1}^{k-1}\gamma(V_{\alpha_i})$.\par 
			Therefore, by using Lemma \refeq{csslm}, there is a linearly independent subsequence $V_{\alpha_{p_1}},\ldots,V_{\alpha_{p_r}}$ of $V_{\alpha_1},\ldots,V_{\alpha_N}$, hence $\alpha_{p_1},\ldots,\alpha_{p_r}$ is an {\rm ISS} of $W$. Besides, with proper reordering of the conjugacy classes $C_1,\ldots,C_r$, the {\rm ISM} of $\alpha_{p_1},\ldots,\alpha_{p_r}$ can become a lower triangular matrix.	
		\end{proof}
		\begin{remark}
			The idea for the proof of Theorem \refeq{clsp2} is inspired by the proof in \cite[Section 7]{CST}, where the authors proved that for types ${\rm A,B\ \text{and}\ D}$, $d(S,\rho_1)=d(S,\rho_2)$ if and only if $\rho_1\cong\rho_2$. We refined its proof and obtained a stronger conclusion. Theorem \refeq{clsp1}, \refeq{clsp2} together with the results in the following several sections form the foundation for proving Theorem \refeq{MT}.
		\end{remark}
		
		\section{For exceptional types}
		\hspace{1.5em}In this section, we prove that each Coxeter group of exceptional types has an {\rm ISS} with a lower triangular {\rm ISM}. The Coxeter diagrams of exceptional types are as follows.
		\begin{table}[H]
			\centering
			\caption{Coxeter diagrams of exceptional types}
			\scalebox{1}{
				\begin{tabular}{l m{8cm}<{\centering}}\label{table2}
					type & diagram \\
					\hline
					\multirow{2.1}{*}{\ \ ${\rm H_3}$} &
					\multirow{2.4}{*}{
						{
							\begin{tikzpicture}[scale=0.8]
								\draw
								[line width=0.5pt] 
								(0,0) circle (0.1)
								(1.5,0) circle (0.1)
								(3,0) circle (0.1)       
								(0.1,0)--(1.4,0)
								(1.6,0)--(2.9,0)
								node at (0,-0.4) {1}
								node at (1.5,-0.4) {2}
								node at (3,-0.4) {3}
								node at (0.75,0.3) {$5$}
								;
							\end{tikzpicture}
						}
					}
					\\~&~
					\\
					\multirow{2.4}{*}{\ \ ${\rm H_4}$} &
					\multirow{3.0}{*}{
						{
							\begin{tikzpicture}[scale=0.8]
								\draw
								[line width=0.5pt] 
								(0,0) circle (0.1)
								(1.5,0) circle (0.1)
								(3,0) circle (0.1)
								(4.5,0) circle (0.1)       
								(0.1,0)--(1.4,0)
								(1.6,0)--(2.9,0)
								(3.1,0)--(4.4,0)
								node at (0,-0.4) {1}
								node at (1.5,-0.4) {2}
								node at (3,-0.4) {3}
								node at (4.5,-0.4) {4}
								node at (0.75,0.3) {$5$}
								;
							\end{tikzpicture}
						}
					}
					\\~&~
					\\
					\multirow{2.4}{*}{\ \ ${\rm F_4}$} &
					\multirow{3.4}{*}{
						{
							\begin{tikzpicture}[scale=0.8]
								\draw
								[line width=0.5pt] 
								(0,0) circle (0.1)
								(1.5,0) circle (0.1)
								(3,0) circle (0.1)
								(4.5,0) circle (0.1)       
								(0.1,0)--(1.4,0)
								(1.59,0.05)--(2.91,0.05)
								(1.59,-0.05)--(2.91,-0.05)
								(3.1,0)--(4.4,0)
								node at (0,-0.4) {1}
								node at (1.5,-0.4) {2}
								node at (3,-0.4) {3}
								node at (4.5,-0.4) {4}
								;
							\end{tikzpicture}
						}
					}
					\\
					\multirow{7.6}{*}{\ \ ${\rm E_6}$} &
					\multirow{6.1}{*}{
						{
							\begin{tikzpicture}[scale=0.8]
								\draw
								[line width=0.5pt] 
								(0,0) circle (0.1)
								(1.5,0) circle (0.1)     
								(3,0) circle (0.1)
								(4.5,0) circle (0.1)
								(6,0) circle (0.1)
								(3,1.5) circle (0.1)
								(0.1,0)--(1.4,0)
								(1.6,0)--(2.9,0)
								(3.1,0)--(4.4,0)
								(4.6,0)--(5.9,0)
								(3,0.1)--(3,1.4)
								node at (0,-0.4) {1}
								node at (1.5,-0.4) {3}
								node at (3,-0.4) {4}
								node at (4.5,-0.4) {5}
								node at (6,-0.4) {6}
								node at (2.6,1.5) {2}
								;
							\end{tikzpicture}
						}
					}
					\\~&~
					\\~&~
					\\~&~
					\\
					\multirow{7.1}{*}{\ \ ${\rm E_7}$} &
					\multirow{5.6}{*}{
						{
							\begin{tikzpicture}[scale=0.8]
								\draw
								[line width=0.5pt] 
								(0,0) circle (0.1)
								(1.5,0) circle (0.1)     
								(3,0) circle (0.1)
								(4.5,0) circle (0.1)
								(6,0) circle (0.1)
								(7.5,0) circle (0.1)
								(3,1.5) circle (0.1)
								(0.1,0)--(1.4,0)
								(1.6,0)--(2.9,0)
								(3.1,0)--(4.4,0)
								(4.6,0)--(5.9,0)
								(6.1,0)--(7.4,0)
								(3,0.1)--(3,1.4)
								node at (0,-0.4) {1}
								node at (1.5,-0.4) {3}
								node at (3,-0.4) {4}
								node at (4.5,-0.4) {5}
								node at (6,-0.4) {6}
								node at (7.5,-0.4) {7}
								node at (2.6,1.5) {2}
								;
							\end{tikzpicture}
						}
					}
					\\~&~
					\\~&~
					\\~&~
					\\
					\multirow{6.6}{*}{\ \ ${\rm E_8}$} &
					\multirow{5.1}{*}{
						{
							\begin{tikzpicture}[scale=0.8]
								\draw
								[line width=0.5pt] 
								(0,0) circle (0.1)
								(1.5,0) circle (0.1)     
								(3,0) circle (0.1)
								(4.5,0) circle (0.1)
								(6,0) circle (0.1)
								(7.5,0) circle (0.1)
								(9,0) circle (0.1)
								(3,1.5) circle (0.1)
								(0.1,0)--(1.4,0)
								(1.6,0)--(2.9,0)
								(3.1,0)--(4.4,0)
								(4.6,0)--(5.9,0)
								(6.1,0)--(7.4,0)
								(7.6,0)--(8.9,0)
								(3,0.1)--(3,1.4)
								node at (0,-0.4) {1}
								node at (1.5,-0.4) {3}
								node at (3,-0.4) {4}
								node at (4.5,-0.4) {5}
								node at (6,-0.4) {6}
								node at (7.5,-0.4) {7}
								node at (9,-0.4) {8}
								node at (2.6,1.5) {2}
								;
							\end{tikzpicture}
						}
					}
					\\~&~
					\\~&~
					\\~&~
					\\~&~
					\\
				\end{tabular}
			}
		\end{table}
		Suppose $(W,S)$ is a finite Coxeter system.
		\begin{definition}[{\cite[Definition 3.1.1]{GP}}]
			A conjugacy class $C$ of $W$ is called a {\it cuspidal class} if $C\cap W_J=\varnothing$ for all proper sets $J\subset S$.
		\end{definition}
		In other words, a conjugacy class $C$ is a cuspidal class if each expression $w$ of each element $g\in C$ contains all Coxeter generators as letters.\par
		We divide all conjugacy classes $\{C_i:i\in[r]\}$ of $W$ into two parts, non-cuspidal classes and cuspidal classes, denoted by $\{{\it Ncus}_i:i\in[p]\}$ and $\{{\it Cus}_i:i\in[q]\}$ respectively, where $p+q=r$.
		\begin{definition}
			Let signature $\alpha\in\mathbb{N}^n$. The {\it non-cuspidal signature vector} of $\alpha$ is
				\begin{equation*}
					{\it NV}(\alpha)\coloneqq(|E_{\alpha}\cap \phi^{-1}({\it Ncus}_1)|,\ldots,|E_{\alpha}\cap \phi^{-1}({\it Ncus}_p)|).
				\end{equation*}
				The {\it cuspidal signature vector} of $\alpha$ is
				\begin{equation*}
					{\it CV}(\alpha)\coloneqq(|E_{\alpha}\cap \phi^{-1}({\it Cus}_1)|,\ldots,|E_{\alpha}\cap \phi^{-1}({\it Cus}_q)|).
				\end{equation*}
		\end{definition}
		\begin{definition}
				\begin{enumerate}
					\item[{\rm(a)}]
					A sequence $\alpha_1,\ldots,\alpha_p$ of signatures is called a {\it non-cuspidal signature sequence}  (abbreviated as {\rm NSS}) if
					\begin{enumerate}
						\item [1)]
						${\it NV}(\alpha_1),\ldots,{\it NV}(\alpha_p)$ is linearly independent.
						\item [2)]
						$|E_{\alpha_i}\cap \phi^{-1}({\it Cus}_j)|=0$ for each $i\in[p],\ j\in[q]$.
					\end{enumerate}
					Suppose $\alpha_1,\ldots,\alpha_p$ is an {\rm NSS}, the corresponding {\it non-cuspidal signature matrix} (abbreviated as {\rm NSM}) is the matrix
					\begin{equation*}
						\left(
						\begin{array}{c}
							{\it NV}(\alpha_1)\\
							\vdots\\
							{\it NV}(\alpha_p)\\
						\end{array}      	
						\right).        	
					\end{equation*}
					\item[{\rm(b)}]
					A sequence $\alpha_1,\ldots,\alpha_q$ of signatures is called a {\it cuspidal signature sequence} (abbreviated as {\rm CSS}) if ${\it CV}(\alpha_1),\ldots,{\it CV}(\alpha_q)$ is linearly independent. Suppose $\alpha_1,\ldots,\alpha_q$ is a {\rm CSS}, the corresponding {\it cuspidal signature matrix} (abbreviated as {\rm CSM}) is the matrix
					\begin{equation*}
						\left(
						\begin{array}{c}
							{\it CV}(\alpha_1)\\
							\vdots\\
							{\it CV}(\alpha_q)\\
						\end{array}      	
						\right).
					\end{equation*}
				\end{enumerate}
		\end{definition}
		\begin{proposition}\label{cssp}
			\begin{enumerate}
				\item[{\rm(a)}]
				If $W$ has an {\rm NSS} and a {\rm CSS}, then $W$ has an {\rm ISS}.
				\item[{\rm(b)}]
				If $W$ has a lower triangular {\rm NSM} and a lower triangular {\rm CSM}, then $W$ has a lower triangular {\rm ISM}.
			\end{enumerate}
		\end{proposition}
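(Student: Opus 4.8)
The plan is to exploit the block structure that the cuspidal/non-cuspidal dichotomy imposes on signature vectors. First I would fix, once and for all, an ordering of the conjugacy classes of $W$ that lists the non-cuspidal classes ${\it Ncus}_1,\ldots,{\it Ncus}_p$ before the cuspidal classes ${\it Cus}_1,\ldots,{\it Cus}_q$. With this convention the signature vector of any signature $\alpha$ is the concatenation $V_\alpha=({\it NV}(\alpha),{\it CV}(\alpha))$.

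Now suppose $\alpha_1,\ldots,\alpha_p$ is an {\rm NSS} and $\beta_1,\ldots,\beta_q$ is a {\rm CSS}. The candidate {\rm ISS} is the concatenated sequence $\alpha_1,\ldots,\alpha_p,\beta_1,\ldots,\beta_q$, and I would stack the corresponding signature vectors into an $r\times r$ matrix $A$. By condition 2) in the definition of an {\rm NSS}, $|E_{\alpha_i}\cap\phi^{-1}({\it Cus}_j)|=0$ for all $i\in[p]$ and $j\in[q]$, i.e.\ ${\it CV}(\alpha_i)=0$; hence the top-right $p\times q$ block of $A$ vanishes and
\begin{equation*}
	A=\begin{pmatrix}N & 0\\ N' & C\end{pmatrix},
\end{equation*}
where $N=({\it NV}(\alpha_i))_{i\in[p]}$ is the {\rm NSM}, $C=({\it CV}(\beta_j))_{j\in[q]}$ is the {\rm CSM}, and $N'=({\it NV}(\beta_j))_{j\in[q]}$ is an irrelevant $q\times p$ block. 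Since $A$ is block lower triangular, $\det A=\det N\cdot\det C\neq0$ because $N$ and $C$ are invertible by hypothesis. Thus $V_{\alpha_1},\ldots,V_{\alpha_p},V_{\beta_1},\ldots,V_{\beta_q}$ are linearly independent and the concatenated sequence is an {\rm ISS}, proving (a).

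For (b) I would run the same construction but additionally fix the orderings that witness lower-triangularity: order the non-cuspidal classes so that the {\rm NSM} $N$ is lower triangular, order the cuspidal classes so that the {\rm CSM} $C$ is lower triangular, and list the {\rm ISS} signatures as the {\rm NSS} signatures (in {\rm NSM} order) followed by the {\rm CSS} signatures (in {\rm CSM} order). Then in the matrix $A$ above the top-left block $N$ is lower triangular, the top-right block is $0$, the bottom-right block $C$ is lower triangular, and every entry of the bottom-left block $N'$ sits in a row of index $\geq p+1$ and a column of index $\leq p$, hence strictly below the diagonal. Therefore $A$ is lower triangular, so the resulting {\rm ISM} is lower triangular, proving (b).

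I do not expect a genuine obstacle here: the whole argument reduces to a block-triangular determinant computation. The only point that needs care is to be explicit that the ordering of conjugacy classes used for the {\rm ISM} is chosen so that the non-cuspidal classes come first, and that condition 2) in the definition of an {\rm NSS} is precisely what forces the relevant off-diagonal block of $A$ to vanish; without that condition the concatenated sequence would not, in general, be independent.
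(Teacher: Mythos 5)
Your proposal is correct and follows essentially the same route as the paper: order the conjugacy classes with the non-cuspidal classes first, stack the NSS and CSS signature vectors, and observe that condition 2) of the NSS definition forces the top-right block to vanish, giving a block lower triangular (and, with the chosen orderings, fully lower triangular) invertible matrix. No gap to report.
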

		\begin{proof}
			Suppose $\alpha_1,\ldots, \alpha_p$ is an {\rm NSS} of $W$ and $\beta_1,\ldots, \beta_q$ is a {\rm CSS} of $W$. Arrange all the conjugacy classes as
			\begin{equation*}
				{\it Ncus}_1,\ldots,{\it Ncus}_p,{\it Cus}_1,\ldots,{\it Cus}_q.
			\end{equation*}
			It follows that
			\begin{equation}\label{ucmcm}
				\left(
				\begin{array}{c}
					V_{\alpha_1}\\
					\vdots\\
					V_{\alpha_p}\\
					V_{\beta_1}\\
					\vdots\\
					V_{\beta_q}\\
				\end{array}      	
				\right)
				=
				\begin{pNiceArray}{ccccc|ccccc}[margin]
					&\Block{3-3}{\rm NSM}&&&&&\Block{3-3}{0}&\\
					& & & & & & & & &\\
					& & & & & & & & &\\
					\hline
					&\Block{3-3}{\ddots}&&&&&\Block{3-3}{\rm CSM}&\\
					& & & & & & & & &\\
					& & & & & & & & &\\
				\end{pNiceArray}.      	
			\end{equation}
			It can be verified that $V_{\alpha_1},\ldots,V_{\alpha_p},V_{\beta_1},\ldots,V_{\beta_q}$ is linearly independent and then $\alpha_1,\ldots,\alpha_p,\beta_1,\ldots,\beta_q$ is an {\rm ISS} of $W$. Furthermore, if {\rm NSM} and {\rm CSM} in equation \eqref{ucmcm} are both lower triangular, then  {\rm ISM} of $\alpha_1,\ldots,\alpha_p,\beta_1,\ldots,\beta_q$ is also lower triangular.
		\end{proof}
		Each non-cuspidal class of $W$ arises from a conjugacy class of a proper parabolic subgroup of $W$ and vice versa, whereby we obtain the following theorem.
		\begin{theorem}\label{ucsp}
			If each proper parabolic subgroup of $W$ has an {\rm ISS} with a lower triangular {\rm ISM}, then $W$ has an {\rm NSS} with a lower triangular {\rm NSM}.
		\end{theorem}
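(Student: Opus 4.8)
The plan is to build the non-cuspidal signatures out of the independent signature sequences of the proper (standard) parabolic subgroups, and then to extract a suitable subsequence by a second application of Lemma~\ref{csslm}. For each $J\subsetneq S$, let $r_J$ be the number of conjugacy classes of $W_J$; by hypothesis $W_J$ has an {\rm ISS} $\beta^J_1,\ldots,\beta^J_{r_J}$ and an ordering $D^J_1,\ldots,D^J_{r_J}$ of its conjugacy classes for which the {\rm ISM} is lower triangular. Viewing each $\beta^J_i\in\mathbb N^{|J|}$ as an element of $\mathbb N^n$ by inserting $0$ in the coordinates outside $J$, I assemble the whole list $\beta^J_i$ ($J\subsetneq S$, $1\le i\le r_J$), of size $m\coloneqq\sum_{J\subsetneq S}r_J$.

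Any word $w\in M(S)$ with ${\rm sig}(w)=\beta^J_i$ uses only letters from $J$, so $\phi(w)\in W_J$, a proper parabolic, and hence the conjugacy class of $\phi(w)$ meets $W_J$ and is non-cuspidal. Thus each $\beta^J_i$ satisfies condition~2) in the definition of an {\rm NSS}, the set $E_{\beta^J_i}$ computed in $W$ agrees with the one computed inside $W_J$, and for every non-cuspidal class $C$ of $W$,
\[
  |E_{\beta^J_i}\cap\phi^{-1}(C)|=\sum_{D:\,f_J(D)=C}|E_{\beta^J_i}\cap\phi^{-1}(D)|,
\]
where $D$ runs over the conjugacy classes of $W_J$ and $f_J$ sends such a class to the conjugacy class of $W$ containing it. All summands are nonnegative, so no cancellation occurs, and, writing $\gamma$ for the support function of Lemma~\ref{csslm} (indexed now by conjugacy classes) and $V^{W_J}_{\beta^J_i}$ for the signature vector of $\beta^J_i$ inside $W_J$, we obtain $\gamma({\it NV}(\beta^J_i))=f_J\bigl(\gamma(V^{W_J}_{\beta^J_i})\bigr)$.

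I then order the list block by block, one block per $J\subsetneq S$, keeping the order $\beta^J_1,\ldots,\beta^J_{r_J}$ inside the block of $J$, and check hypotheses (a), (b), (c) of Lemma~\ref{csslm} for the resulting sequence of vectors ${\it NV}(\beta^J_i)$, with $p$ (the number of non-cuspidal classes) in the role of $n$. Since the {\rm ISM} of $W_J$ is invertible and lower triangular, $V^{W_J}_{\beta^J_i}$ is supported on $\{D^J_1,\ldots,D^J_i\}$ with nonzero $D^J_i$-entry, so $\gamma({\it NV}(\beta^J_i))\subseteq f_J(\{D^J_1,\ldots,D^J_i\})$ and $f_J(D^J_i)\in\gamma({\it NV}(\beta^J_i))$. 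Hence the first vector of the first block has singleton support, giving (a); and for (c), within the block of $J$ the vectors preceding ${\it NV}(\beta^J_i)$ already contain $f_J(D^J_1),\ldots,f_J(D^J_{i-1})$ in their supports, so ${\it NV}(\beta^J_i)$ enlarges the running union by at most the class $f_J(D^J_i)$, while the first vector of every later block again has singleton support and so enlarges the union by at most one class. For (b), any non-cuspidal class $C$ meets some proper parabolic $W_J$, so $f_J(D)=C$ for some class $D$ of $W_J$; as $V^{W_J}_{\beta^J_1},\ldots,V^{W_J}_{\beta^J_{r_J}}$ span $\mathbb C^{r_J}$ and ${\it NV}(\beta^J_i)$ is the image of $V^{W_J}_{\beta^J_i}$ under the fixed linear map $v\mapsto\bigl(\sum_{D:f_J(D)=C'}v_D\bigr)_{C'}$, the indicator vector of $\{C\}$ lies in the span of the ${\it NV}(\beta^J_i)$, forcing $C\in\gamma({\it NV}(\beta^J_i))$ for some $i$. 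So the supports cover all $p$ non-cuspidal classes, and in particular $m\ge p$.

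By Lemma~\ref{csslm} there is then a length-$p$ subsequence $\alpha_1,\ldots,\alpha_p$ of the $\beta^J_i$ whose vectors ${\it NV}(\alpha_1),\ldots,{\it NV}(\alpha_p)$ are linearly independent and become lower triangular after permuting the columns, i.e.\ after reordering the non-cuspidal classes; together with condition~2) verified above, $\alpha_1,\ldots,\alpha_p$ is an {\rm NSS} of $W$ with a lower triangular {\rm NSM}. I expect the block-concatenation bookkeeping for (c) — promoting the per-parabolic lower triangularity to the global statement ``each new signature enlarges the support by at most one class'' — and the covering statement (b), which combines the structural fact that every non-cuspidal class comes from a proper parabolic with the spanning property of an {\rm ISS}, to be the parts requiring the most care; the simple but essential point underlying everything is that word counts are nonnegative, so fusing conjugacy classes can never kill a coordinate.
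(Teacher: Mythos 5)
Your proposal is correct and follows essentially the same route as the paper: zero-pad the parabolic ISS signatures, observe that they only meet non-cuspidal classes, use that the map sending a conjugacy class of $W_J$ to the $W$-class containing it is onto the non-cuspidal classes, and let nonnegativity of the word counts transfer the triangular support structure upward. The only difference is in the final bookkeeping — you extract the length-$p$ subsequence by reusing Lemma~\ref{csslm} on the block-ordered list, whereas the paper selects one signature per non-cuspidal class via a lexicographic minimum over pairs $(k,i)$ and checks lower triangularity directly; both are valid and rest on the same facts.
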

		
		\begin{proof}
			Let {\it U} denote the set of all non-cuspidal classes of $W$. Let $W_{J_1},\ldots,W_{J_m}$ be all proper parabolic subgroups of $W$. 
			\par
			For each proper parabolic subgroups $W_{J_k}$, by assumption, there is an order of all conjugacy classes of $W_{J_k}$: $C_1^{J_k},\ldots,C_{r_k}^{J_k}$ and a sequence $\alpha_{k,1},\ldots,\alpha_{k,r_k} \\ \in\mathbb{N}^{r_k}$ such that
			\begin{equation}\label{56eq1}
			\begin{aligned}
				&E_{\alpha_{k,i}}\cap\phi^{-1}(C_i^{J_k})\neq\varnothing\quad\text{for}\ i\in[r_k],
				\\
				&E_{\alpha_{k,i}}\cap\phi^{-1}(C_j^{J_k})=\varnothing\quad\text{for}\ i,j\in[r_k]\ \text{and}\ j>i.
			\end{aligned}
		    \end{equation}
			Let
			\begin{equation*}
				\psi:  C_i^{J_k}\mapsto \{gxg^{-1}:x\in C_i^{J_k},g\in W\}.
			\end{equation*}
			$\psi(C_i^{J_k})$ is the conjugacy class of $W$ that contains $C_i^{J_k}$. It is easy to see that each $\psi(C_i^{J_k})$ is an non-cuspidal class of $W$. Furthermore, since each non-cuspidal class of $W$ contains at least one word that belongs to some proper parabolic subgroup, we have
			\begin{equation*}
				\bigcup_{k=1}^m\bigcup_{i=1}^{r_k}\psi(C_i^{J_k})={\it U}.
			\end{equation*}
			Suppose $J_k=\{s_{k_1},\ldots,s_{k_d}\}$ where $k_1<\cdots<k_d$. Given $\alpha_{k,i}=(a_1,\ldots,a_d)$, define $\alpha'_{k,i}=(a'_1,\ldots,a'_n)$ where
			\begin{equation*}
				a'_x=
				\begin{cases}
					a_i &{\rm if}\ x=k_i\ {\rm for\ some}\ i\in[d],\\
					0 &{\rm else}.
				\end{cases}
			\end{equation*}
			for each $x\in[n]$. In another word, $\alpha'_{k,i}$ is the signature on $W$ induced by $\alpha_{k,i}$. Let ${\it U}=\{{\it Ncus}_1,{\it Ncus}_2,\ldots,{\it Ncus}_p\}$ where
			\begin{equation}\label{56eq2}
				\min_{\leq_{\rm lex}}\{(k,i):\psi(C_i^{J_k})={\it Ncus}_1\}\leq_{\rm lex}\cdots\leq_{\rm lex}\min_{\leq_{\rm lex}}\{(k,i):\psi(C_i^{J_k})={\it Ncus}_p\}.
			\end{equation}
			Let
			\begin{equation}\label{56eq3}
				\beta_j=\alpha'_{\theta},\ \theta=\min_{\leq_{\rm lex}}\{(k,i):\psi(C_i^{J_k})={\it Ncus}_j\}
			\end{equation}
			\par
			The below shows that $\beta_1,\ldots,\beta_p$ is an {\rm NSS} with a lower triangular {\rm NSM} of $W$. For any given $\beta_j$, let $\beta_j=\alpha'_{k,i}$. First, since $\phi(E_{\beta_j})\subset W_{J_k}$ and $W_{J_k}\cap{\it Cus}_l=\varnothing$ for each $l\in[q]$, we have $E_{\beta_j}\cap\phi^{-1}({\it Cus}_l)=\varnothing$ for each $l\in[q]$. Next, by \eqref{56eq1} we have $E_{\beta_j}\cap \phi^{-1}(C_i^{J_k})\neq\varnothing$, by \eqref{56eq3} we have $\psi(C_i^{J_k})={\it Ncus}_j$, thus $E_{\beta_j}\cap\phi^{-1}({\it Ncus}_j)\neq\varnothing$. Suppose $E_{\beta_j}\cap\phi^{-1}({\it Ncus}_h)\neq\varnothing$. By \eqref{56eq1}, we obtain ${\it Ncus}_h=\psi(C_{i'}^{J_k})$ for some $i'\leq i$. Let $\beta_h=\alpha'_{\tilde{k},\tilde{i}}$, then $(\tilde{k},\tilde{i})\leq_{\rm lex}(k,i')\leq_{\rm lex}(k,i)$ by \eqref{56eq3}, and then $h\leq j$ by \eqref{56eq2}. Thus, for each $h>j$, $E_{\beta_j}\cap\phi^{-1}({\it Ncus}_h)=\varnothing$. Therefore $\beta_1,\ldots,\beta_p$ is an {\rm NSS} with a lower triangular {\rm NSM} of $W$.
		\end{proof}
		\begin{theorem}\label{cmp}
			Each Coxeter group of exceptional types has a {\rm CSS} with a lower triangular {\rm CSM}.
		\end{theorem}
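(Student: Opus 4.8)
The plan is to prove Theorem~\ref{cmp} one exceptional type at a time, reducing each case to a finite computation whose outcome is recorded in Section~6. Fix an exceptional type $W$ with Coxeter diagram as in Table~\ref{table2}, write $\ell(g)$ for the Coxeter length of $g\in W$, and set $d_C\coloneqq\min\{\ell(g):g\in C\}$ for a conjugacy class $C$. Two elementary observations drive everything. First, if $\phi(w)$ lies in a cuspidal class $C$ then every Coxeter generator occurs in $w$, since otherwise $\phi(w)$ would lie in a proper parabolic subgroup; hence ${\rm sig}(w)$ has full support, so ${\it CV}(\alpha)$ is the zero vector unless $\alpha$ has full support. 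Second, such a $w$ satisfies $|\alpha|=|w|\ge\ell(\phi(w))\ge d_C$, so as $\alpha$ runs through the full-support signatures ordered first by $|\alpha|$ and then lexicographically, a cuspidal class $C$ first contributes to some ${\it CV}(\alpha)$ at a signature of length exactly $d_C$ — the signature of some minimal-length element of $C$, which automatically has full support. The cuspidal classes of each exceptional type, with their values $d_C$ and explicit minimal-length representatives, are available from \cite[Appendix~B]{GP}.

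Granting this, for each of ${\rm H_3},{\rm H_4},{\rm F_4},{\rm E_6},{\rm E_7},{\rm E_8}$ I would work inside a concrete faithful model of $W$ — for instance the reflection representation of Theorem~\ref{refrep}, or a root-system or permutation model — so that for a word $w$ one can compute $\phi(w)$ and decide its conjugacy class by comparison with the Geck--Pfeiffer class representatives, using the characteristic polynomial of the matrix $\rho_{\it ref}(\phi(w))$ and, where this is not a complete invariant, finer tests. For the full-support signatures $\alpha=(a_1,\dots,a_n)$ up to a suitable length bound one then computes ${\it CV}(\alpha)$: for the shorter signatures directly, by enumerating the $|\alpha|!/(a_1!\cdots a_n!)$ words of signature $\alpha$ and tallying their images among the cuspidal classes; and for the few cuspidal classes of very large $d_C$ — most strikingly the singleton class consisting of the longest element of ${\rm E_8}$, for which $d_C$ is maximal among all conjugacy classes — merely by exhibiting one realizing word of some chosen full-support signature of length $d_C$, which suffices because such a class can be placed last in the ordering below, where only a nonzero diagonal entry is required of it.

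From the tabulated cuspidal signature vectors I would extract a {\rm CSS} with a lower triangular {\rm CSM} as follows. Order the cuspidal classes by increasing $d_C$. Since ${\it CV}(\alpha)$ vanishes on every cuspidal class of minimal length greater than $|\alpha|$, the subsequence of signatures obtained by processing them in increasing order of length (and lexicographically within a fixed length) produces a {\rm CSM} that is block lower triangular for this ordering; within each $d_C$-block one runs a greedy pivoting search — Gaussian elimination constrained so as to preserve triangularity — to select signatures and reorder the block so that its diagonal entries are nonzero. Equivalently, in the spirit of Lemma~\ref{csslm} applied to the vectors ${\it CV}(\cdot)$, one verifies that the selected signatures can be arranged so that each contributes exactly one new cuspidal class to the cumulative support, starting from one whose cuspidal signature vector has one-element support. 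That this search succeeds and reaches all $q$ cuspidal classes is precisely what the MATLAB run confirms for each exceptional type; the resulting explicit {\rm CSS} and {\rm CSM} are displayed in Section~6.

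I expect the main obstacle to be twofold. Conceptually, there is no a priori reason that distinct cuspidal classes must be separated by the short full-support signatures available: within a $d_C$-block two classes could be realized by exactly the same signatures with exactly the same multiplicities, obstructing linear independence, and ruling this out is the real content of the verification. Practically, the difficulty is concentrated in ${\rm E_7}$ and especially ${\rm E_8}$, where both the number of relevant signatures and the multinomial word counts are enormous and naive word enumeration is infeasible at the longer lengths; the algorithm must therefore lean on the structure of minimal-length elements, on cheap conjugacy invariants that abort class identification early, and on the ordering-by-$d_C$ device above, which lets the hardest classes be handled by an existence statement rather than an exact count.
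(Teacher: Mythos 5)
Your strategy coincides with the paper's: order the cuspidal classes by the minimal length $l(C)$ of their elements, observe that ${\it CV}(\alpha)$ vanishes on every cuspidal class with $l(C)>|\alpha|$ (so that minimal-length representatives already give a block lower triangular {\rm CSM}), and resolve the ties within a block by machine computation against the Geck--Pfeiffer data, using the characteristic polynomial of $\rho_{\it ref}$ as the conjugacy test. This is exactly Lemma \refeq{cuslemma} combined with Proposition \refeq{cusprop} and Algorithm \refeq{algA}. Two simplifications you do not exploit but could: by \cite[Appendix B]{GP} the ties occur only in adjacent \emph{pairs} (Proposition \refeq{cusprop}), so each block has size at most $2$ and the ``greedy pivoting'' within a block reduces to a single emptiness check $E_{{\rm sig}(w)}\cap\phi^{-1}({\it Cus}_{j})=\varnothing$ for one minimal-length word $w$ of the partner class; and for ${\rm F_4}$, where $p_{{\it Cus}_4^{\rm F_4}}=p_{{\it Cus}_5^{\rm F_4}}$ so the polynomial test fails, the paper settles the unique tied pair by hand (Proposition \refeq{f4prop}).

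The genuine gap is the one you flag yourself but do not close: how to decide $E_{\alpha}\cap\phi^{-1}({\it Cus}_j)=\varnothing$ when $|E_\alpha|=|\alpha|!/(a_1!\cdots a_n!)$ is astronomically large. The tied pairs in ${\rm E_8}$ occur at lengths up to $46$, so ``enumerating the words of signature $\alpha$ and tallying'' is not a finite computation in any practical sense, and ``lean on the structure of minimal-length elements and cheap invariants'' is not yet a method. The paper's Proposition \refeq{algprop} (and its ${\rm E_8}$ analogue) supplies the missing device: since membership in a conjugacy class is invariant under cyclic rotation of a word, and since in a word of minimal length for its class each letter $s_1$ can be pushed rightward (cyclically) until it abuts a non-commuting letter $s_2$, etc., every $u\in E_\alpha$ of minimal length is conjugate to a word in the much smaller set $I_4^{\alpha}I_1^{\alpha}(E_{\alpha_{23}}^{\rm cyc})$ obtained by inserting the peripheral letters into one representative of each cyclic class of the core; words of non-minimal length are excluded automatically because $|u|=l({\it Cus}_j)$. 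Without this reduction (or an equivalent one) the verification for ${\rm H_4}$ and ${\rm E_8}$ cannot be carried out, so your proof is an accurate blueprint of the paper's argument that is missing its one nontrivial combinatorial ingredient.
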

		We will prove this theorem in the next section. The proof is largely based on the results obtained by running the algorithm we developed in MATLAB. Once the theorem holds, we can prove the main result of this section.
		\begin{theorem}\label{expp}
			Each Coxeter group of exceptional types has an {\rm ISS} with a lower triangular {\rm ISM}.
		\end{theorem}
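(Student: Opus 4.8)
The plan is to argue by strong induction on the rank $n=|S|$ of the irreducible exceptional Coxeter system $(W,S)$, using that the statement is already established for all classical types and that the property ``has an {\rm ISS} with a lower triangular {\rm ISM}'' is preserved under direct products.

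First I would record the structural input. The smallest exceptional type is ${\rm H_3}$, of rank $3$, and for any $J\subsetneq S$ the proper parabolic subgroup $W_J$ decomposes as a direct product of irreducible Coxeter groups, each of rank strictly less than $n$ since $|J|<n$. Each such irreducible factor is either of classical type, and then has an {\rm ISS} with a lower triangular {\rm ISM} by Theorem \ref{clsp1} or Theorem \ref{clsp2}, or of exceptional type of rank $<n$, and then has an {\rm ISS} with a lower triangular {\rm ISM} by the induction hypothesis. (In the base case $n=3$ no exceptional factor of smaller rank can occur, so only the classical results are invoked.) By Corollary \ref{dpc}, $W_J$ itself has an {\rm ISS} with a lower triangular {\rm ISM}; as $J\subsetneq S$ was arbitrary, every proper parabolic subgroup of $W$ has an {\rm ISS} with a lower triangular {\rm ISM}.

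Next I would feed this into Theorem \ref{ucsp} to conclude that $W$ has an {\rm NSS} with a lower triangular {\rm NSM}. Independently, Theorem \ref{cmp} (to be proved in the next section) provides a {\rm CSS} with a lower triangular {\rm CSM} for $W$. Combining the {\rm NSS} and the {\rm CSS} through Proposition \ref{cssp} then yields an {\rm ISS} with a lower triangular {\rm ISM} for $W$, which closes the induction and proves the theorem.

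The only point requiring a little care is the well-foundedness of the recursion through parabolic subgroups, which holds because passing to a proper parabolic strictly decreases the rank, so the chains ${\rm E_8}\supset{\rm E_7}\supset{\rm E_6}$, ${\rm H_4}\supset{\rm H_3}$, and the analogous reductions for ${\rm F_4}$ terminate at classical systems of rank at most $2$. Granting Theorem \ref{cmp}, the present statement then follows purely formally from Theorem \ref{ucsp}, Proposition \ref{cssp}, Corollary \ref{dpc}, and the classical-type results of Section~4; the genuine obstacle is the cuspidal-class input of Theorem \ref{cmp}, whose (computer-assisted) verification is deferred to Section~6.
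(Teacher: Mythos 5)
Your proposal is correct and follows essentially the same route as the paper: reduce via Proposition \ref{cssp} and Theorem \ref{cmp} to producing an {\rm NSS}, apply Theorem \ref{ucsp} to pass to proper parabolic subgroups, and handle those by combining the classical-type results with Corollary \ref{dpc} and a recursion on the exceptional types. The paper phrases the recursion as an explicit processing order ${\rm H_3},{\rm H_4},{\rm F_4},{\rm E_6},{\rm E_7},{\rm E_8}$ rather than induction on rank, but this is only a cosmetic difference.
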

		\begin{proof}
			By Proposition \refeq{cssp} and Theorem \refeq{cmp}, it is sufficient to show that each Coxeter group $W(M)$ of exceptional types has an {\rm NSS} with a lower triangular {\rm NSM}. Furthermore, by Theorem \refeq{ucsp}, it is sufficient to show that each proper parabolic subgroup of $W(M)$ has an {\rm ISS} with a lower triangular {\rm ISM}.\par 
			Consider all Coxeter groups of exceptional types as the order ${\rm H_3},{\rm H_4},{\rm F_4},\\{\rm E_6},{\rm E_7},{\rm E_8}$. By observing Table \refeq{table1} and Table \refeq{table2}, given an exceptional type $M$,\\ there are only three possible cases for the proper parabolic subgroups of $W(M)$:
			\begin{enumerate}
				\item [{\rm a)}]
				classical types,
				\item [{\rm b)}]
				exceptional types which are before it,
				\item [{\rm c)}]
				direct products of some types in the first two cases.
			\end{enumerate}
			By Theorem \refeq{clsp1}, \refeq{clsp2} and Corollary \refeq{dpc}, for each exceptional type $M$, all that is needed is to show that every previous exceptional types have an {\rm ISS} with a lower triangular {\rm ISM}. Hence ${\rm H_3}$ is proven firstly, and then ${\rm H_4,F_4,E_6,E_7}$ and ${\rm E_8}$ are proven in turn, so the theorem holds.
		\end{proof}
		
		\section{The proof of Theorem \refeq{cmp} and algorithms}
		\hspace{1.5em}Suppose $(W,S)$ is a finite Coxeter system, here we define some notations. For $g\in W$, $w\in M(S)$,
		\begin{align*}
			l(g)&\coloneqq{\rm min}\{|u|:u\in M(S),\phi(u)=g\},\\
			l(w)&\coloneqq{\rm min}\{|u|:u\in M(S),\phi(u)=\phi(w)\}.
		\end{align*}
		For a conjugacy class $C$ of $W$,
		\begin{equation*}
			l(C)\coloneqq{\rm min}\{l(g):g\in C\}.
		\end{equation*}
		Recall from Section \refeq{sec2} that $\rho_{\it ref}$ denotes the reflection representation of $W$. Characteristic polynomial of $g\in W$ is
		\begin{equation*}
			p_g(\lambda)\coloneqq\det[\lambda I-\rho_{\it ref}(g)].
		\end{equation*}
		Characteristic polynomial of a conjugacy class $C$ is 
		\begin{equation*}
			p_C(\lambda)\coloneqq p_g(\lambda)
		\end{equation*}
		for an arbitrary element $g\in C$.
		\begin{lemma}\label{cuslemma}
			Let ${\it Cus}_1,\ldots,{\it Cus}_q$ be all cuspidal classes of $W$ with $l({\it Cus}_1)\leq\cdots\leq l({\it Cus}_q)$.
			\begin{enumerate}
				\item[{\rm(a)}]
				If $l({\it Cus}_i)\neq l({\it Cus}_j)$ for each $i\neq j$, then $W$ has a {\rm CSS} with a lower triangular {\rm CSM}.
				\item[{\rm(b)}]
				If
				\begin{multline*}
					l({\it Cus}_1)<\cdots<l({\it Cus}_{k_1})=l({\it Cus}_{k_1+1})<\cdots\\<l({\it Cus}_{k_h})=l({\it Cus}_{k_h+1})<\cdots<l({\it Cus}_q)
				\end{multline*}
				and for each $j\in[h]$, there exists a word $w_{k_j}\in\phi^{-1}(\it Cus_{k_j})$ such that $|w_{k_j}|=l({\it Cus}_{k_j})$ and $|E_{{\rm sig}(w_{k_j})}\cap \phi^{-1}({\it Cus}_{k_j+1})|=0$, then $W$ has a {\rm CSS} with a lower triangular {\rm CSM}.
			\end{enumerate}
		\end{lemma}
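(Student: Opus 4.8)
The plan is to exhibit, in either case, an explicit {\rm CSS} $\beta_1,\dots,\beta_q$ --- indexed in the same order as the cuspidal classes, i.e.\ by non-decreasing value of $l({\it Cus}_i)$ --- whose {\rm CSM} is lower triangular with nonzero diagonal entries; such a matrix is invertible, so ${\it CV}(\beta_1),\dots,{\it CV}(\beta_q)$ are linearly independent and both assertions follow at once. Everything rests on one elementary length count: if $\alpha\in\mathbb{N}^n$ and $l({\it Cus}_m)>|\alpha|$, then $E_{\alpha}\cap\phi^{-1}({\it Cus}_m)=\varnothing$, because any $u\in E_{\alpha}$ has $l(\phi(u))\le|u|=|\alpha|<l({\it Cus}_m)\le l(g)$ for every $g\in{\it Cus}_m$. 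Dually, straight from the definition of $l({\it Cus}_i)$ one can pick a word $w_i\in\phi^{-1}({\it Cus}_i)$ with $|w_i|=l({\it Cus}_i)$, and then $w_i\in E_{{\rm sig}(w_i)}$ forces $|E_{{\rm sig}(w_i)}\cap\phi^{-1}({\it Cus}_i)|\ge 1$. (It is also worth recalling that every reduced word for an element of a cuspidal class uses all of $S$, which is exactly why cuspidal classes are the genuinely hard part of the whole program.)

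For part~(a) I would take $l({\it Cus}_1)<\dots<l({\it Cus}_q)$ (strict, by hypothesis) and set $\beta_i={\rm sig}(w_i)$ with $w_i$ as above, so $|\beta_i|=l({\it Cus}_i)$. The $i$-th entry of ${\it CV}(\beta_i)$ is then $\ge 1$, while for any column $m>i$ we have $l({\it Cus}_m)>l({\it Cus}_i)=|\beta_i|$, so that entry is $0$ by the length count. Hence the {\rm CSM} with rows ${\it CV}(\beta_1),\dots,{\it CV}(\beta_q)$ is lower triangular with nonzero diagonal, which settles (a).

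For part~(b) the displayed hypothesis says that $l({\it Cus}_i)=l({\it Cus}_{i+1})$ occurs only for $i\in\{k_1,\dots,k_h\}$ and that such an equality is never chained, i.e.\ $l({\it Cus}_{k_j+1})<l({\it Cus}_{k_j+2})$ whenever $k_j+2\le q$. I would then define the {\rm CSS} by: for $i=k_j$, let $\beta_{k_j}={\rm sig}(w_{k_j})$ where $w_{k_j}$ is the word supplied by the hypothesis (so $|w_{k_j}|=l({\it Cus}_{k_j})$ and $|E_{{\rm sig}(w_{k_j})}\cap\phi^{-1}({\it Cus}_{k_j+1})|=0$); for every other $i$, let $\beta_i={\rm sig}(w_i)$ with $w_i$ any minimal-length word of ${\it Cus}_i$. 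In all cases $|\beta_i|=l({\it Cus}_i)$ and the $i$-th entry of ${\it CV}(\beta_i)$ is $\ge 1$. Now fix a row $i$ and a column $m>i$. If $i\notin\{k_1,\dots,k_h\}$, then $l({\it Cus}_{i+1})>l({\it Cus}_i)$, hence $l({\it Cus}_m)>|\beta_i|$ and the entry is $0$. If $i=k_j$, then the entry in column $m=k_j+1$ is $0$ by the special choice of $w_{k_j}$, while for $m\ge k_j+2$ we have $l({\it Cus}_m)\ge l({\it Cus}_{k_j+2})>l({\it Cus}_{k_j+1})=l({\it Cus}_{k_j})=|\beta_{k_j}|$, so that entry is $0$ too. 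Thus the {\rm CSM} is again lower triangular with nonzero diagonal, proving (b).

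The argument inside the lemma is short; the main work lies in verifying its hypotheses. Whether part~(a) applies --- all cuspidal classes of pairwise distinct minimal length --- and, failing that, whether part~(b)'s condition holds --- at each tied pair a minimal-length representative of the earlier class that avoids the later class --- has to be checked type by type for ${\rm H_3},{\rm H_4},{\rm F_4},{\rm E_6},{\rm E_7},{\rm E_8}$, which is precisely the computation that the MATLAB algorithm of the next section carries out; I expect that, rather than the lemma itself, to be the real obstacle. The only mildly delicate point within the proof is the asymmetry of part~(b): it suffices to perturb the representative of the \emph{smaller-indexed} class in each tied pair, since lower-triangularity only constrains the entries strictly above the diagonal.
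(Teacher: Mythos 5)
Your proof is correct and follows essentially the same route as the paper: in both cases you build the CSS from minimal-length representatives (replaced at each tied index $k_j$ by the special word supplied in the hypothesis) and use the length count $|u|=|\alpha|<l({\it Cus}_m)\Rightarrow E_\alpha\cap\phi^{-1}({\it Cus}_m)=\varnothing$ to kill the above-diagonal entries. Your write-up is in fact slightly more explicit than the paper's (notably in spelling out why columns $m\ge k_j+2$ vanish in the tied case), and your closing remark is accurate — the real work is the type-by-type verification of the hypotheses in the following section.
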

		\begin{proof}
			\begin{enumerate}
				\item[{\rm(a)}]
				For each $i\in[q]$, take a word $w_i\in \phi^{-1}({\it Cus}_i)$ such that $|w_i|=l({\it Cus}_i)$. Then $|E_{{\rm sig}(w_i)}\cap \phi^{-1}({\it Cus}_j)|=0$ for all $j>i$ since $|w_i|=l({\it Cus}_i)<l({\it Cus}_j)$. Also, clearly we have $|E_{{\rm sig}(w_i)}\cap \phi^{-1}({\it Cus}_i)|\neq0$. Hence ${\rm sig}(w_1),\ldots,{\rm sig}(w_q)$ is a {\rm CSS} with a lower triangular {\rm CSM}.
				\item[{\rm(b)}]
				For each $i\in[q]$, if $i=k_j$ for some $j\in[h]$, take $w_i$ such that $|w_i|=l({\it Cus}_{i})$ and $|E_{{\rm sig}(w_i)}\cap \phi^{-1}({\it Cus}_{i+1})|=0$ by assumption; or else, take $w_i\in \phi^{-1}({\it Cus}_i)$ such that $|w_i|=l({\it Cus}_i)$. Since $|E_{{\rm sig}(w_i)}\cap \phi^{-1}({\it Cus}_i)|\neq0$ and $|E_{{\rm sig}(w_i)}\cap \phi^{-1}({\it Cus}_j)|=0$ for all $j>i$, it follows that ${\rm sig}(w_1),\ldots,{\rm sig}(w_q)$ is a {\rm CSS} with a lower triangular {\rm CSM}.
			\end{enumerate}
		\end{proof}
		M. Geck and G. Pfeiffer listed all cuspidal classes of each exceptional type in \cite[Appendix B]{GP}. We summarize the information and obtain the following proposition.
		\begin{proposition}[{\cite[Appendix B]{GP}}]\label{cusprop}
			Let ${\it Cus}_i^{\rm M}$ denote the $i$th cuspidal class of $W(M)$ in \cite[Appendix B]{GP}.
			\begin{enumerate}
				\item[{\rm(a)}]
				For $W=W({\rm H_3})$, $l({\it Cus}_1^{\rm H_3})<\cdots<l({\it Cus}_4^{\rm H_3})$.
				\item[{\rm(b)}]
				For $W=W({\rm H_4})$,
				\begin{equation*}
					l({\it Cus}_1^{\rm H_4})<\cdots<l({\it Cus}_7^{\rm H_4})=l({\it Cus}_8^{\rm H_4})<\cdots<l({\it Cus}_{20}^{\rm H_4}).
				\end{equation*}
				\item[{\rm(c)}]
				For $W=W({\rm F_4})$,
				\begin{equation*}
					l({\it Cus}_1^{\rm F_4})<\cdots<l({\it Cus}_4^{\rm F_4})=l({\it Cus}_5^{\rm F_4})<\cdots<l({\it Cus}_9^{\rm F_4}).
				\end{equation*}
				\item[{\rm(d)}]
				For $W=W({\rm E_6})$, $l({\it Cus}_1^{\rm E_6})<\cdots<l({\it Cus}_5^{\rm E_6})$.
				\item[{\rm(e)}]
				For $W=W({\rm E_7})$, $l({\it Cus}_1^{\rm E_7})<\cdots<l({\it Cus}_{12}^{\rm E_7})$.
				\item[{\rm(f)}]
				For $W=W({\rm E_8})$,
				\begin{gather*}
					l({\it Cus}_1^{\rm E_8})<\cdots<l({\it Cus}_5^{\rm E_8})=l({\it Cus}_6^{\rm E_8})<\cdots<l({\it Cus}_9^{\rm E_8})=l({\it Cus}_{10}^{\rm E_8})<\\
					l({\it Cus}_{11}^{\rm E_8})=l({\it Cus}_{12}^{\rm E_8})<l({\it Cus}_{13}^{\rm E_8})=l({\it Cus}_{14}^{\rm E_8})<\cdots\\
					<l({\it Cus}_{21}^{\rm E_8})=l({\it Cus}_{22}^{\rm E_8})<l({\it Cus}_{23}^{\rm E_8})=l({\it Cus}_{24}^{\rm E_8})<\cdots<l({\it Cus}_{30}^{\rm E_8}).
				\end{gather*}
			\end{enumerate}
		\end{proposition}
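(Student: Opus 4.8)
The statement is a finite verification problem: for each of the six exceptional types $M$ one must pin down the integers $l(\mathit{Cus}_i^M)$ and check that, in the ordering of cuspidal classes inherited from \cite[Appendix B]{GP}, they obey the displayed chain of strict inequalities and equalities. The plan is to make the values $l(\mathit{Cus}_i^M)$ explicit and then read off the pattern.

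First I would record the structural facts that frame the computation. A conjugacy class of a finite Coxeter group is cuspidal exactly when it is \emph{elliptic}, i.e. every element acts on $V$ without nonzero fixed vectors (equivalently $p_C(1)\neq 0$): if $w$ fixes a nonzero vector then its stabilizer is a proper parabolic subgroup containing $w$, and conversely any $w\in W_J$ with $J\subsetneq S$ fixes the nonzero subspace orthogonal to $\langle e_j:j\in J\rangle$. This identifies our cuspidal classes with those tabulated in \cite[Appendix B]{GP} and fixes their ordering. It also yields the a priori bound $l(\mathit{Cus}_i^M)\ge n$, since a reduced expression of a minimal length element of an elliptic class must contain all $n$ generators; moreover equality $l(C)=n$ forces $C$ to be the Coxeter class, because a shorter reduced word would omit a generator and place $w$ in a proper parabolic. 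Hence $\mathit{Cus}_1^M$ is the Coxeter class, $l(\mathit{Cus}_1^M)=n$, and it is the unique cuspidal class of minimal length.

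Next, to obtain the remaining values I would realize $W$ through the faithful reflection representation $\rho_{\mathit{ref}}$ (Theorem \ref{refrep}) and enumerate words $w\in M(S)$ by nondecreasing length. For each word I compute $\rho_{\mathit{ref}}(\phi(w))$ and decide which cuspidal class $\phi(w)$ lies in by a genuine $W$-conjugacy test (an orbit computation under conjugation), \emph{not} by comparing characteristic polynomials alone, since distinct $W$-classes may share a characteristic polynomial. The least length at which a representative of $\mathit{Cus}_i^M$ first appears is $l(\mathit{Cus}_i^M)$, and the enumeration is finite because it may be truncated at $l(w_0)$, the number of positive roots. This is precisely the MATLAB procedure used in this section, and it returns the complete list $l(\mathit{Cus}_1^M)\le\cdots\le l(\mathit{Cus}_q^M)$ for each type; the same numbers can be cross-checked against the minimal length representatives printed in \cite[Appendix B]{GP}.

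With the tables in hand the six relations are read off directly. The hardest part is not the strict inequalities, which are immediate, but certifying the equalities: the pair $(7,8)$ in $\mathrm{H_4}$, the pair $(4,5)$ in $\mathrm{F_4}$, and the several consecutive pairs in $\mathrm{E_8}$. For each such coincidence I must confirm both that it consists of two \emph{distinct} cuspidal classes of equal minimal length—so the conjugacy test has to remain reliable exactly where characteristic polynomials agree—and that no unlisted coincidence occurs between any other pair of consecutive classes, so that the chains are exactly as displayed. Once verified, these orderings are precisely the hypotheses of Lemma \ref{cuslemma}, which then yields a CSS with a lower triangular CSM for each exceptional type.
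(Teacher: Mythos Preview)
The paper does not supply a proof of this proposition at all: it is explicitly labeled as a citation to \cite[Appendix B]{GP}, and the sentence preceding it says the authors ``summarize the information'' from those tables. Geck and Pfeiffer list, for each exceptional type, a minimal-length representative of every cuspidal class; one simply reads off the lengths of those representatives and observes the stated pattern. Your independent computational verification via the reflection representation is a legitimate alternative, but it is considerably more work than the paper intends here.

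Two small corrections to your write-up. First, the sentence ``This is precisely the MATLAB procedure used in this section'' is inaccurate: Algorithm~\ref{algA} in the paper does not compute minimal lengths of cuspidal classes; it tests, for a \emph{given} word $w$, whether $E_{\mathrm{sig}(w)}$ meets a specified cuspidal class. Second, your insistence on a full conjugacy test rather than comparison of characteristic polynomials is over-cautious for most types: Proposition~\ref{pgprop} shows that for $\mathrm{H_3},\mathrm{H_4},\mathrm{E_6},\mathrm{E_7},\mathrm{E_8}$ the characteristic polynomial already separates cuspidal classes; only in $\mathrm{F_4}$ do two cuspidal classes (precisely $\mathit{Cus}_4^{\mathrm{F_4}}$ and $\mathit{Cus}_5^{\mathrm{F_4}}$) share one. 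Finally, your closing sentence overstates what the orderings buy: the equality cases feed into part~(b) of Lemma~\ref{cuslemma}, whose extra hypothesis is verified only later (Proposition~\ref{f4prop} and Remark~\ref{h4e8rmk}), so Proposition~\ref{cusprop} alone does not yet yield a CSS.
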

		
		\begin{lemma}[{\cite[Lemma 3.1.10, Exercise 3.16]{GP}}]\label{cuspolylemma}
			Let $g\in W$, the conjugacy class of $g$ in $W$ is a cuspidal class if and only if $p_g(1)\neq 0$.
		\end{lemma}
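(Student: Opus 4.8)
The plan is to interpret $p_g(1)$ geometrically and reduce the statement to a standard fact about stabilisers in finite reflection groups. Since $\rho_{\it ref}(g)$ is a real matrix, $p_g(1)=\det[I-\rho_{\it ref}(g)]$ vanishes exactly when $1$ is an eigenvalue of $\rho_{\it ref}(g)$, that is, exactly when the fixed space $V^g\coloneqq\{v\in V:\rho_{\it ref}(g)v=v\}$ is nonzero (the eigenvalue $1$ is real, so a real fixed vector would exist). Conjugate elements have fixed spaces of the same dimension, so whether $V^g=0$ depends only on the conjugacy class of $g$. Hence it suffices to prove: the class of $g$ is cuspidal if and only if $V^g=0$.

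Two preliminary observations do most of the work. First, from $\sigma_i(x)=x-B(x,e_i)e_i$ we get $\sigma_i(x)=x$ if and only if $B(x,e_i)=0$, so for any $J\subseteq S$ the parabolic subgroup $W_J=\langle\sigma_j:j\in J\rangle$ fixes pointwise the subspace $U_J\coloneqq\{x\in V:B(x,e_j)=0\text{ for all }j\in J\}$. Second, for a finite Coxeter group the form $B$ is positive definite, hence nondegenerate; therefore $\dim U_J=n-|J|$, so $U_S=V^W=0$ while $U_J\neq 0$ whenever $J\subsetneq S$.

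For the easy direction, if the class of $g$ is not cuspidal then $g$ is conjugate to some $h\in W_J$ with $J\subsetneq S$; then $V^h\supseteq U_J\neq 0$, hence $V^g\neq 0$. Conversely, suppose $V^g\neq 0$, and let $P$ be the pointwise stabiliser in $W$ of the subspace $V^g$. Then $g\in P$ and $P$ fixes the nonzero subspace $V^g$. By Steinberg's theorem, $P$ is generated by the reflections it contains, and the pointwise stabiliser of a subspace in a finite reflection group is in fact a parabolic subgroup, i.e.\ conjugate to some standard parabolic $W_J$ (see \cite{H} or \cite{GP}). Since $P\neq W$ (because $P$ fixes a nonzero vector while $V^W=0$), we must have $J\subsetneq S$; thus $g$ lies in a conjugate of the proper parabolic $W_J$, and its class is not cuspidal. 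Combining the two directions with the first paragraph gives the lemma.

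The only nonroutine ingredient is the parabolicity of subspace stabilisers in finite reflection groups — precisely the input one draws from \cite{GP} (via Steinberg's theorem) — while everything else is elementary manipulation of the bilinear form $B$ together with the bookkeeping that $V^W=0$.
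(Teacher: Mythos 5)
Your proof is correct. Note that the paper itself gives no proof of this lemma --- it is quoted directly from \cite[Lemma 3.1.10, Exercise 3.16]{GP} --- and your argument is essentially the standard one behind that reference: translate $p_g(1)\neq 0$ into the vanishing of the fixed space $V^g$, use positive definiteness of $B$ (so that $V^{W}=0$ while every proper standard parabolic fixes a nonzero subspace) for the easy direction, and invoke the Steinberg-type fact that pointwise stabilisers of subspaces are conjugates of standard parabolics for the converse. All the supporting steps you use (reality of a fixed vector for the real eigenvalue $1$, $\dim U_J=n-|J|$ from nondegeneracy and linear independence of the simple roots, properness of $W_J$ for $J\subsetneq S$) check out, so the write-up is a complete and faithful substitute for the citation.
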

		\begin{proposition}\label{pgprop}
				For $W=W({\rm H_3}),W({\rm H_4}),W({\rm E_6}),W({\rm E_7}),\text{or}\ W({\rm E_8})$, let $C_0$ be a cuspidal class of $W$. Let $g\in W$, then $g\in C_0$ if and only if $p_g(\lambda)=p_{C_0}(\lambda)$.
		\end{proposition}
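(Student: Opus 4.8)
The plan is to reduce the statement to a single combinatorial fact — that the cuspidal classes of each of these five groups are pairwise separated by the characteristic polynomial of the reflection representation — and to dispose of that fact by a finite inspection of the data in \cite[Appendix B]{GP} (the same data underlying the computation in this section). The forward implication requires nothing: since $\rho_{\it ref}(hgh^{-1})=\rho_{\it ref}(h)\rho_{\it ref}(g)\rho_{\it ref}(h)^{-1}$, the polynomial $p_g(\lambda)=\det[\lambda I-\rho_{\it ref}(g)]$ depends only on the conjugacy class of $g$, so $g\in C_0$ yields $p_g(\lambda)=p_{C_0}(\lambda)$ directly from the definition of $p_{C_0}$.

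For the converse, I would argue as follows. Assume $p_g(\lambda)=p_{C_0}(\lambda)$ with $C_0$ a cuspidal class, and fix any $g_0\in C_0$. By Lemma \ref{cuspolylemma}, $p_{C_0}(1)=p_{g_0}(1)\neq 0$; hence $p_g(1)=p_{C_0}(1)\neq 0$, and, applying Lemma \ref{cuspolylemma} once more, the conjugacy class $C$ of $g$ is itself cuspidal. Thus it only remains to show that two cuspidal classes of $W$ with the same characteristic polynomial must coincide. Granting this, $p_C(\lambda)=p_g(\lambda)=p_{C_0}(\lambda)$ forces $C=C_0$, i.e.\ $g\in C_0$, and the proposition follows.

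The only real content, and the step I expect to be the main obstacle, is therefore the last one: verifying that for $W=W({\rm H_3}),W({\rm H_4}),W({\rm E_6}),W({\rm E_7})$ and $W({\rm E_8})$ the characteristic polynomials $p_{{\it Cus}_1^{\rm M}}(\lambda),\ldots,p_{{\it Cus}_q^{\rm M}}(\lambda)$ of the cuspidal classes are pairwise distinct. Representatives of all cuspidal classes are tabulated in \cite[Appendix B]{GP}, so this reduces to a finite comparison of explicit polynomials — for $W({\rm E_8})$ this means comparing the $30$ polynomials attached to ${\it Cus}_1^{\rm E_8},\ldots,{\it Cus}_{30}^{\rm E_8}$ — which I would carry out together with the other computations of this section. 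It is worth stressing that this separation property genuinely fails for $W({\rm F_4})$: there $W({\rm F_4})$ has distinct cuspidal classes sharing a characteristic polynomial, which is exactly why ${\rm F_4}$ is excluded from Proposition \ref{pgprop} and is instead handled through the length bookkeeping of Lemma \ref{cuslemma}(b).
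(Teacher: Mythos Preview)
Your proposal is correct and follows essentially the same argument as the paper: the forward direction is immediate, and for the converse you use Lemma~\ref{cuspolylemma} to conclude that $g$ lies in some cuspidal class, then invoke the tables in \cite[Appendix B]{GP} to see that distinct cuspidal classes of these five groups have distinct characteristic polynomials. Your remark about $W({\rm F_4})$ being the genuine exception is also exactly what the paper notes in the remark following the proposition.
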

		\begin{proof}
			If $g\in C_0$, then $p_g(\lambda)=p_{C_0}(\lambda)$. Conversely, if $p_g(\lambda)=p_{C_0}(\lambda)$, it follows that $p_g(1)\neq 0$ so the conjugacy class of $g$ in $W$ is a cuspidal class by Lemma \refeq{cuspolylemma}. It is known that the characteristic polynomials of any two different cuspidal classes in $W$ are different by \cite[Appendix B]{GP}. Hence $g$ is in $C_0$.
		\end{proof}
		\begin{remark}
			For $W=W({\rm F_4})$, the result in Proposition \refeq{pgprop} is false since $p_{{\it Cus}_4^{\rm F_4}}(\lambda)=p_{{\it Cus}_5^{\rm F_4}}(\lambda)=(\lambda+1)(\lambda^3+1)$.
		\end{remark}
		\begin{remark}
		By Lemma \refeq{cuslemma} and Proposition \refeq{cusprop}, it remains for us to show that for each two cuspidal classes ${\it Cus}_i^M$ and ${\it Cus}_j^M$ that $M={\rm H_4,F_4}$ or ${\rm E_8}$ and $l({\it Cus}_i^M)=l({\it Cus}_j^M)$, there exists a word $w\in\phi^{-1}({\it Cus}_i^M)$ such that $|w|=l({\it Cus}_i^M)$ and $E_{{\rm sig}(w)}\cap \phi^{-1}({\it Cus}_j^M)=\varnothing$. It is a natural idea to design an algorithm to check whether $E_{{\rm sig}(w)}\cap \phi^{-1}({\it Cus}_j^M)=\varnothing$ for given ${\it Cus}_i^M,{\it Cus}_j^M$ and word $w$ such that $\phi(w)\in {\it Cus}_i^M$ and $|w|=l({\it Cus}_i^M)$. Specifically, such an algorithm should realize the following functions:
		\begin{enumerate}
			\item [{\rm a)}]
			Traverse $E_{{\rm sig}(w)}$.
			\item [{\rm b)}]
			For each $u\in E_{{\rm sig}(w)}$, check whether $\phi(u)\in {\it Cus}_j^M$.
		\end{enumerate}
		\par
		However, such $E_{{\rm sig}(w)}$ can be very large, especially in $W({\rm E_8})$. To be precise, suppose ${\rm sig}(w)=(a_1,\ldots,a_n)$, then $|E_{{\rm sig}(w)}|=|{\rm sig}(w)|!/(a_1!\cdots a_n!)$. Thus the traversal usually cannot be completed within an acceptable time. So we hope to find a small enough subset $H\subseteq E_{{\rm sig}(w)}$ such that
		\begin{equation*}
			E_{{\rm sig}(w)}\cap\phi^{-1}({\it Cus}_j^M)=\varnothing\quad\text{if and only if}\quad H\cap\phi^{-1}({\it Cus}_j^M)=\varnothing
		\end{equation*}
		so that we just need to traverse $H$ instead of $E_{{\rm sig}(w)}$.
    	\end{remark}
		\begin{definition}
			Suppose $(W,S)$ is a Coxeter system, let $w=t_1t_2\cdots t_n\in M(S)$, $t_1,\ldots,t_n\in S$. The {\it cyclic class} of $w$ is a subset of $M(S)$ defined as
			\begin{equation*}
				\{t_1\cdots t_n,t_2\cdots t_nt_1,\ldots,t_nt_1\cdots t_{n-1}\}.
			\end{equation*}
		\end{definition}
		For $W=W({\rm H_4}),W({\rm F_4})$, let signature $\alpha=(a_1,a_2,a_3,a_4)$ with $a_1\leq a_2$ and $a_4\leq a_3$, and let $\alpha_{23}=(0,a_2,a_3,0)$. Let $u\in E_{\alpha_{23}}$, choose $a_1$ letters from all $a_2$ letters of $s_2$ in $u$ and insert $s_1$ to the left of each chosen letters. Let $I_1^{\alpha}(u)$ be the set consisting of all the words obtained by this way. For example, suppose $u=s_2s_3s_2s_2$ and $a_1=2$, then
		\begin{equation*}
			I_1^{\alpha}(u)=\{s_1s_2s_3s_1s_2s_2,s_1s_2s_3s_2s_1s_2,s_2s_3s_1s_2s_1s_2\}.
		\end{equation*}
		Similarly, let $u\in I_1^{\alpha}(E_{\alpha_{23}})$, choose $a_4$ letters from all $a_3$ letters of $s_3$ in $u$ and insert $s_4$ to the left of each chosen letter and obtain a set $I_4^{\alpha}(u)$ consisting of all the words derived by this way. We write $I_4^{\alpha}(I_1^{\alpha}(\cdot))$ as $I_4^{\alpha}I_1^{\alpha}(\cdot)$ for short.
		\par 
		All the cyclic classes in $E_{\alpha_{23}}$ form a partition of $E_{\alpha_{23}}$. We select an arbitrary element from each cyclic class in $E_{\alpha_{23}}$ and obtain a set $E_{\alpha_{23}}^{\rm cyc}$ consisting of these elements. 
		\begin{proposition}\label{algprop}
			Let $(W(M),S)$ be a Coxeter system and $M={\rm H_4}\ \text{or}\ {\rm F_4}$. Suppose two cuspidal classes ${\it Cus}_i^M$ and ${\it Cus}_j^M$ satisfy $l({\it Cus}_i^M)=l({\it Cus}_j^M)$, and a word $w\in\phi^{-1}({\it Cus}_i^M)$ satisfies $|w|=l({\it Cus}_i^M)$ and let $\alpha={\rm sig}(w)$. Then $E_{\alpha}\cap\phi^{-1}({\it Cus}_j^M) =\varnothing$ if and only if $I_4^{\alpha}I_1^{\alpha}(E_{{\alpha}_{23}}^{\rm cyc})\cap\phi^{-1}({\it Cus}_j^M)=\varnothing$.
		\end{proposition}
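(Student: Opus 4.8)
The plan is to prove the non-trivial direction by contraposition after a normalization argument, since the other direction is immediate. Indeed, every word in $I_4^{\alpha}I_1^{\alpha}(E_{\alpha_{23}}^{\rm cyc})$ arises from a word of signature $\alpha_{23}=(0,a_2,a_3,0)$ by inserting $a_1$ copies of $s_1$ and then $a_4$ copies of $s_4$, so it has signature $\alpha$; hence $I_4^{\alpha}I_1^{\alpha}(E_{\alpha_{23}}^{\rm cyc})\subseteq E_{\alpha}$, and $E_{\alpha}\cap\phi^{-1}({\it Cus}_j^M)=\varnothing$ forces the smaller intersection to be empty as well. For the converse, suppose $v\in E_{\alpha}$ with $\phi(v)\in{\it Cus}_j^M$; I must produce some $v''\in I_4^{\alpha}I_1^{\alpha}(E_{\alpha_{23}}^{\rm cyc})$ with $\phi(v'')\in{\it Cus}_j^M$. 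First note that $|v|=|\alpha|=|w|=l({\it Cus}_i^M)=l({\it Cus}_j^M)\le l(\phi(v))\le|v|$, so $v$ is a reduced word and $\phi(v)$ is of minimal length in its class ${\it Cus}_j^M$.

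The argument uses two operations that preserve reducedness and minimality in a conjugacy class. The first is a commutation $\cdots s_as_b\cdots\mapsto\cdots s_bs_a\cdots$ with $m_{ab}=2$, which also preserves the group element and the signature. The second is a cyclic shift $t_1t_2\cdots t_m\mapsto t_2\cdots t_mt_1$, which replaces $\phi(v)$ by the conjugate $t_1\phi(v)t_1$; since $v$ is reduced and begins with $t_1$, we have $l(t_1\phi(v))=m-1$, hence $l(t_1\phi(v)t_1)\in\{m-2,m\}$, and minimality of $l(\phi(v))=m$ in the class rules out $m-2$, so the shifted word is again reduced and of minimal length in the same class. The diagram input for $M\in\{{\rm H_4},{\rm F_4}\}$ is that $s_1$ commutes with $s_3$ and $s_4$ and that $s_2$ commutes with $s_4$; it follows that in any reduced word of $W$ two occurrences of $s_1$ are separated by an occurrence of $s_2$ (otherwise a factor $s_1us_1$ with $u\in M(\{s_3,s_4\})$ would cancel to $u$), and, symmetrically, two occurrences of $s_4$ are separated by an occurrence of $s_3$.

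With these tools I would normalize $v$ in two passes. In the first pass, push each $s_1$ rightward by commuting it past $s_3$'s and $s_4$'s; by the separation property it comes to rest immediately to the left of a distinct $s_2$, unless it reaches the right end of the word, a case repaired by one cyclic shift followed by further commutations. In the second pass, push each $s_4$ rightward past $s_1$'s and $s_2$'s; it comes to rest immediately to the left of a distinct $s_3$ (one cyclic shift handling an $s_4$ that reaches the end), and one checks this does not destroy the adjacencies $s_1s_2$ created in the first pass. Let $v'$ be the resulting word: then $\phi(v')=\phi(v)\in{\it Cus}_j^M$, deleting all $s_1$'s and $s_4$'s from $v'$ yields a word $u_0\in E_{\alpha_{23}}$, and $v'\in I_4^{\alpha}I_1^{\alpha}(\{u_0\})$ by construction. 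Finally, let $\bar u\in E_{\alpha_{23}}^{\rm cyc}$ be the chosen representative of the cyclic class of $u_0$, write $u_0=pq$ and $\bar u=qp$, split $v'=\pi\kappa$ accordingly (attaching each inserted letter to the block that contains the $s_2$ or $s_3$ immediately to its right), and set $v''=\kappa\pi$. Then $v''$ is a cyclic shift of $v'$, so $\phi(v'')\in{\it Cus}_j^M$, and $v''\in I_4^{\alpha}I_1^{\alpha}(\{\bar u\})\subseteq I_4^{\alpha}I_1^{\alpha}(E_{\alpha_{23}}^{\rm cyc})$, which is what we needed.

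The main obstacle is the combinatorial bookkeeping in the two normalization passes: one must check that the rightward pushes always terminate in the claimed configuration for every reduced $v$ with $a_1\le a_2$ and $a_4\le a_3$, handling the degenerate cases $a_1=0$ or $a_4=0$, words ending in $s_1$ or $s_4$, and the transient disturbance of the $s_1$-positions during the second pass, and one must check that the final cyclic shift genuinely carries the $I_4^{\alpha}I_1^{\alpha}$-presentation of $u_0$ onto that of $\bar u$. Once the two preservation properties above are in place, the remaining verifications are routine.
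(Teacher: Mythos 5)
Your proposal is correct and follows essentially the same route as the paper: the easy inclusion $I_4^{\alpha}I_1^{\alpha}(E_{\alpha_{23}}^{\rm cyc})\subseteq E_{\alpha}$ one way, and in the other direction a normalization of a minimal-length word by commuting each $s_1$ rightward onto a distinct $s_2$ and each $s_4$ onto a distinct $s_3$ (with cyclic shifts for wrap-around), followed by a cyclic shift to land in $E_{\alpha_{23}}^{\rm cyc}$. Your explicit justification that cyclic shifts preserve minimal length in the conjugacy class, and the $s_1$/$s_2$ separation property, are exactly the points the paper uses (somewhat more tersely) via ``since $|u|=l(C_0)$, no $s_1$ will encounter another $s_1$.''
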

		\begin{proof}
			Let ${\rm sig}(w)=(a_1,a_2,a_3,a_4)$. It follows that $a_1\leq a_2$ and $a_4\leq a_3$, since $|w|=l({\it Cus}_i^M)$. It is obvious that $I_4^{\alpha}I_1^{\alpha}(E_{{\alpha}_{23}}^{\rm cyc})\subseteq E_{\alpha}$, hence $E_{\alpha}\cap\phi^{-1}({\it Cus}_j^M)=\varnothing$ implies $I_4^{\alpha}I_1^{\alpha}(E_{{\alpha}_{23}}^{\rm cyc})\cap\phi^{-1}({\it Cus}_j^M)=\varnothing$.
			
			Conversely, suppose $I_4^{\alpha}I_1^{\alpha}(E_{{\alpha}_{23}}^{\rm cyc})\cap\phi^{-1}({\it Cus}_j^M)=\varnothing$. Since each element in $I_4^{\alpha}I_1^{\alpha}(E_{{\alpha}_{23}})$ is conjugate to an element in $I_4^{\alpha}I_1^{\alpha}(E_{{\alpha}_{23}}^{\rm cyc})$, we have $I_4^{\alpha}I_1^{\alpha}(E_{{\alpha}_{23}})\cap\phi^{-1}({\it Cus}_j^M)=\varnothing$. Let $u\in E_{\alpha}$ and $C_0$ is the conjugacy class of $\phi(u)$. If $|u|>l(C_0)$, then $u\notin\phi^{-1}({\it Cus}_j^M)$ since $|u|=l({\it Cus}_j^M)$. If $|u|=l(C_0)$, we perform the following operations on $u$. First, for each letter $s_1$ in $u$, swap it with letters to its right until a letter non-commuting with $s_1$ is encountered. If this $s_1$ reaches the rightmost position, then move it to the leftmost position and repeat the above step. Since $|u|=l(C_0)$, no $s_1$ will encounter another $s_1$. Thus every $s_1$ will eventually encounter an $s_2$ to its right through this procedure. Next, perform a similar procedure for each $s_4$ in the newly obtained word, then every $s_4$ will eventually encounter an $s_3$ to its right. Therefore, we finally obtain a new word $u'\in I_4^{\alpha}I_1^{\alpha}(E_{{\alpha}_{23}})$ and $\phi(u')$ is conjugate to $\phi(u)$. Since $I_4^{\alpha}I_1^{\alpha}(E_{{\alpha}_{23}})\cap\phi^{-1}({\it Cus}_j^M)=\varnothing$, we have $u'\notin\phi^{-1}({\it Cus}_j^M)$ and then $u\notin\phi^{-1}({\it Cus}_j^M)$. Hence $E_{\alpha}\cap\phi^{-1}({\it Cus}_j^M)=\varnothing$.
		\end{proof}
		For $W=W({\rm E_8})$, let signature $\alpha=(a_1,a_2,a_3,a_4,a_5,a_6,a_7,a_8)$ where $a_1\leq a_3,a_1\leq a_4,a_3-a_1\leq a_4,a_2\leq a_4,a_5\leq a_4+a_6,a_8\leq a_7,a_8\leq a_6$ and $a_7-a_8\leq a_6$ (otherwise $|u|>l(C)$ for each $u\in E_{\alpha}$, where $C$ is the conjugacy class of $\phi(w)$). Similar to $I_1^{\alpha}$ and $I_4^{\alpha}$ in types ${\rm H_4}$ and ${\rm F_4}$, we define the following operations. Let $\alpha_{46}=(0,0,0,a_4,0,a_6,0,0)$.
		\par
		\begin{enumerate}[a)]
		\item
		Let $u\in E_{\alpha_{46}}$, choose $a_3-a_1$ letters from all $a_4$ letters of $s_4$ in $u$ and insert $s_3$ to the left of each chosen letter and obtain a set $I_3^{\alpha}(u)$ consisting of all the words derived by this way.
		\item
		Let $u\in I_3^{\alpha}(E_{\alpha_{46}})$, choose $a_1$ letters from all $a_4$ letters of $s_4$ in $u$ and insert $s_1s_3$ to the left of each chosen letter and obtain a set $I_{13}^{\alpha}(u)$ consisting of all the words derived by this way.
		\item
		Let $u\in I_{13}^{\alpha}I_3^{\alpha}(E_{\alpha_{46}})$, choose $a_2$ letters from all $a_4$ letters of $s_4$ in $u$ and insert $s_2$ to the left of each chosen letter and obtain a set $I_2^{\alpha}(u)$ consisting of all the words derived by this way.
		\item
		Let $u\in I_2^{\alpha}I_{13}^{\alpha}I_3^{\alpha}(E_{\alpha_{46}})$, choose $a_5$ letters from all $a_4+a_6$ letters of $s_4,s_6$ in $u$ and insert $s_5$ to the left of each chosen letter and obtain a set $I_5^{\alpha}(u)$ consisting of all the words derived by this way.
		\item
		Let $u\in I_5^{\alpha}I_2^{\alpha}I_{13}^{\alpha}I_3^{\alpha}(E_{\alpha_{46}})$, choose $a_7-a_8$ letters from all $a_6$ letters of $s_6$ in $u$ and insert $s_7$ to the left of each chosen letter and obtain a set $I_7^{\alpha}(u)$ consisting of all the words derived by this way.
		\item
		Let $u\in I_7^{\alpha}I_5^{\alpha}I_2^{\alpha}I_{13}^{\alpha}I_3^{\alpha}(E_{\alpha_{46}})$, choose $a_8$ letters from all $a_6$ letters of $s_6$ in $u$ and insert $s_8s_7$ to the left of each chosen letter and obtain a set $I_{87}^{\alpha}(u)$ consisting of all the words derived by this way.
    	\end{enumerate}
		\par
		By selecting an arbitrary element from each cyclic class in $E_{\alpha_{46}}$, we obtain a set $E_{\alpha_{46}}^{\rm cyc}$ consisting of these elements. By a proof similar to that of Proposition \refeq{algprop}, the following proposition can be established.
		\begin{proposition}
			For Coxeter system $(W({\rm E_8}),S)$, suppose two cuspidal classes ${\it Cus}_i^{\rm E_8}$ and ${\it Cus}_j^{\rm E_8}$ satisfy $l({\it Cus}_i^{\rm E_8})=l({\it Cus}_j^{\rm E_8})$, word $w\in\phi^{-1}({\it Cus}_i^{\rm E_8})$ satisfies $|w|=l({\it Cus}_i^{\rm E_8})$ and let $\alpha={\rm sig}(w)$. Then $E_{\alpha}\cap\phi^{-1}({\it Cus}_j^{\rm E_8})=\varnothing$ if and only if $I_{87}^{\alpha}I_7^{\alpha}I_5^{\alpha}I_2^{\alpha}I_{13}^{\alpha}I_3^{\alpha}(E_{{\alpha}_{46}}^{\rm cyc})\cap\phi^{-1}({\it Cus}_j^{\rm E_8})=\varnothing$.
		\end{proposition}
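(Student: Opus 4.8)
The plan is to follow the proof of Proposition \ref{algprop} essentially verbatim, the only structural difference being that the two-layer insertion scheme $I_4^{\alpha}I_1^{\alpha}$ used for ${\rm H_4},{\rm F_4}$ is replaced by the six-layer scheme $I_{87}^{\alpha}I_7^{\alpha}I_5^{\alpha}I_2^{\alpha}I_{13}^{\alpha}I_3^{\alpha}$ for ${\rm E_8}$. The ``only if'' direction is immediate: from the definitions of the operators and the inequalities imposed on $\alpha=(a_1,\ldots,a_8)$, every word obtained by applying $I_{87}^{\alpha}I_7^{\alpha}I_5^{\alpha}I_2^{\alpha}I_{13}^{\alpha}I_3^{\alpha}$ to a word of $E_{\alpha_{46}}^{\rm cyc}$ has signature exactly $\alpha$ (the inserted copies of $s_3$ total $(a_3-a_1)+a_1=a_3$, the inserted copies of $s_7$ total $(a_7-a_8)+a_8=a_7$, and so on), so this set is a subset of $E_{\alpha}$ and an empty intersection with $\phi^{-1}({\it Cus}_j^{\rm E_8})$ for $E_{\alpha}$ forces an empty intersection for the subset.

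For the converse, assume $I_{87}^{\alpha}I_7^{\alpha}I_5^{\alpha}I_2^{\alpha}I_{13}^{\alpha}I_3^{\alpha}(E_{\alpha_{46}}^{\rm cyc})\cap\phi^{-1}({\it Cus}_j^{\rm E_8})=\varnothing$. First I would upgrade this to the same statement with $E_{\alpha_{46}}$ in place of $E_{\alpha_{46}}^{\rm cyc}$: a cyclic shift of a core word $u\in E_{\alpha_{46}}$, carried out together with the corresponding chosen insertion positions, produces a cyclic shift of the fully inserted word, and since a cyclic shift of a word represents a conjugate group element while ${\it Cus}_j^{\rm E_8}$ is a conjugacy class, every element of $I_{87}^{\alpha}\cdots I_3^{\alpha}(E_{\alpha_{46}})$ is conjugate to an element of $I_{87}^{\alpha}\cdots I_3^{\alpha}(E_{\alpha_{46}}^{\rm cyc})$, so the former set also misses $\phi^{-1}({\it Cus}_j^{\rm E_8})$. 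Then I would fix an arbitrary $u\in E_{\alpha}$ and let $C_0$ be its conjugacy class; since $|u|=|w|=l({\it Cus}_i^{\rm E_8})=l({\it Cus}_j^{\rm E_8})$, the case $|u|>l(C_0)$ is disposed of at once (if $u\in\phi^{-1}({\it Cus}_j^{\rm E_8})$ then $C_0={\it Cus}_j^{\rm E_8}$ and $l(C_0)=|u|$, a contradiction), so it remains to treat $|u|=l(C_0)$, i.e. $u$ a reduced word.

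The core of the argument, and the step I expect to be the main obstacle, is to rewrite such a reduced $u$, using only commutations of adjacent commuting generators and cyclic shifts of the whole word (both of which preserve $C_0$), into a word lying in $I_{87}^{\alpha}\cdots I_3^{\alpha}(E_{\alpha_{46}})$; the conclusion $u\notin\phi^{-1}({\it Cus}_j^{\rm E_8})$ then follows from the previous paragraph, and letting $u$ vary over $E_{\alpha}$ gives the claim. I would carry out this rewriting layer by layer, in the order mirroring the nesting of the operators and the shape of the ${\rm E_8}$ diagram, peripheral generators first: slide every $s_1$ rightward (wrapping around when the end of the word is reached) until it meets a non-commuting generator, which by reducedness of $u$ cannot be another $s_1$ and hence is an $s_3$, so the $s_1$'s form blocks $s_1s_3$ lodged immediately left of distinct occurrences of $s_3$; do likewise for the $s_8$'s against the $s_7$'s; then push the blocks $s_1s_3$, and afterwards the remaining $s_3$'s, rightward until they sit immediately left of occurrences of $s_4$, push the $s_2$'s against occurrences of $s_4$, the $s_5$'s against occurrences of $s_4$ or $s_6$, and the blocks $s_8s_7$, then the remaining $s_7$'s, against occurrences of $s_6$. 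The delicate point, to be checked carefully at each layer, is that a sliding letter or block never gets stuck against the ``wrong'' non-commuting generator and that every sliding letter reaches a target of the intended kind; this is precisely where the reducedness of $u$ is used (to exclude the bad collisions) together with the inequalities $a_1\leq a_3$, $a_3-a_1\leq a_4$, $a_2\leq a_4$, $a_5\leq a_4+a_6$, $a_8\leq a_7$, $a_8\leq a_6$, $a_7-a_8\leq a_6$ (to guarantee enough occurrences of $s_3,s_4,s_6$ and $s_7$ to absorb all the peripheral generators). Once the normalization is complete the resulting word $u'$ lies in $I_{87}^{\alpha}\cdots I_3^{\alpha}(E_{\alpha_{46}})$ with $\phi(u')$ conjugate to $\phi(u)$, which is what is needed.
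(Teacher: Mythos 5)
Your proposal matches the paper's treatment: the paper proves this proposition only by the remark ``by a proof similar to that of Proposition \ref{algprop}, the following proposition can be established,'' and your argument is exactly that adaptation --- the same subset inclusion for the ``only if'' direction, the same reduction from $E_{\alpha_{46}}$ to $E_{\alpha_{46}}^{\rm cyc}$ via cyclic conjugacy, the same dichotomy $|u|>l(C_0)$ versus $|u|=l(C_0)$, and the same letter-sliding normalization, carried out layer by layer to match the nesting of $I_{87}^{\alpha}I_7^{\alpha}I_5^{\alpha}I_2^{\alpha}I_{13}^{\alpha}I_3^{\alpha}$. You in fact spell out more of the ${\rm E_8}$-specific details (the order in which the peripheral generators are absorbed, and the role of the inequalities on $\alpha$) than the paper does.
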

		In the rest of this paper, we may write word $s_is_js_k\ldots$ as $ijk\ldots$ for short.
		\begin{proposition}\label{f4prop}
			For $W=W({\rm F_4})$, there exists $w\in\phi^{-1}({\it Cus}_5^{\rm F_4})$ such that $|w|=l({\it Cus}_5^{\rm F_4})$ and $E_{{\rm sig}(w)}\cap\phi^{-1}({\it Cus}_4^{\rm F_4})=\varnothing$.
		\end{proposition}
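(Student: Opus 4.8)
The plan is to exhibit one explicit minimal-length word $w$ with $\phi(w)\in{\it Cus}_5^{\rm F_4}$ and to deduce the emptiness statement from Proposition~\ref{algprop}, which converts it into a finite verification. First I would use the list of cuspidal classes of $W({\rm F_4})$ in \cite[Appendix~B]{GP} to pick a word $w$ with $\phi(w)\in{\it Cus}_5^{\rm F_4}$, $|w|=l({\it Cus}_5^{\rm F_4})$, and signature $\alpha={\rm sig}(w)=(a_1,a_2,a_3,a_4)$. Minimality forces $a_1\le a_2$ and $a_4\le a_3$ (in the $\rm F_4$ diagram $s_1$ fails to commute only with $s_2$, and $s_4$ only with $s_3$, so an excess of $s_1$'s or $s_4$'s would permit a length reduction), so $\alpha$ automatically has the shape required to apply Proposition~\ref{algprop}. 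The point of this step is to choose $w$ so that $\alpha$ satisfies $E_{\alpha}\cap\phi^{-1}({\it Cus}_4^{\rm F_4})=\varnothing$; this cannot be detected from the reflection representation, since $p_{{\it Cus}_4^{\rm F_4}}(\lambda)=p_{{\it Cus}_5^{\rm F_4}}(\lambda)=(\lambda+1)(\lambda^3+1)$. Here the graph automorphism $\tau$ of the $\rm F_4$ diagram ($s_1\leftrightarrow s_4$, $s_2\leftrightarrow s_3$), which interchanges the two classes ${\it Cus}_4^{\rm F_4}$ and ${\it Cus}_5^{\rm F_4}$ (see \cite[Appendix~B]{GP}), is a useful guide: if $\alpha$ were symmetric, i.e. $(a_1,a_2,a_3,a_4)=(a_4,a_3,a_2,a_1)$, then $\tau(w)$ would be a word of signature $\alpha$ lying in $\phi^{-1}({\it Cus}_4^{\rm F_4})$, so I would restrict the search among minimal-length representatives of ${\it Cus}_5^{\rm F_4}$ to those with non-symmetric signature and test the few surviving candidates.

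For a candidate $\alpha$, Proposition~\ref{algprop} reduces $E_{\alpha}\cap\phi^{-1}({\it Cus}_4^{\rm F_4})=\varnothing$ --- a statement about a set of cardinality $|\alpha|!/(a_1!\,a_2!\,a_3!\,a_4!)$ --- to $I_4^{\alpha}I_1^{\alpha}(E_{\alpha_{23}}^{\rm cyc})\cap\phi^{-1}({\it Cus}_4^{\rm F_4})=\varnothing$, which only involves the words obtained by listing one representative per cyclic class of words in $s_2,s_3$ of signature $\alpha_{23}=(0,a_2,a_3,0)$, inserting $s_1$ to the left of $a_1$ chosen copies of $s_2$ (the operation $I_1^{\alpha}$), and then inserting $s_4$ to the left of $a_4$ chosen copies of $s_3$ (the operation $I_4^{\alpha}$). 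For each resulting word $u$ I would decide whether $\phi(u)\in{\it Cus}_4^{\rm F_4}$ by realizing $\phi(u)$ as a matrix through the faithful reflection representation $\rho_{\it ref}$ of Theorem~\ref{refrep} and checking whether it is $W({\rm F_4})$-conjugate to a fixed representative of ${\it Cus}_4^{\rm F_4}$ via an orbit computation inside the group of order $1152$; as a fast pre-filter, $p_{\phi(u)}(\lambda)\ne(\lambda+1)(\lambda^3+1)$ already excludes membership, and by \cite[Appendix~B]{GP} together with Lemma~\ref{cuspolylemma} the conjugacy test is needed only for the few $u$ whose image has this characteristic polynomial.

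The whole procedure --- enumerating $E_{\alpha_{23}}^{\rm cyc}$, performing the insertions, and running the conjugacy tests --- is a bounded, explicit computation, which I would carry out with the MATLAB algorithm described above; its output on a suitable $w$ is $I_4^{\alpha}I_1^{\alpha}(E_{\alpha_{23}}^{\rm cyc})\cap\phi^{-1}({\it Cus}_4^{\rm F_4})=\varnothing$, and then $E_{\alpha}\cap\phi^{-1}({\it Cus}_4^{\rm F_4})=\varnothing$ by Proposition~\ref{algprop}, which is the assertion. I expect the real obstacle to be the first step: there is no a priori structural reason why some minimal-length representative of ${\it Cus}_5^{\rm F_4}$ should have a signature avoided by all of $\phi^{-1}({\it Cus}_4^{\rm F_4})$, so the existence of a usable $w$ must come out of the search itself. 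The $\tau$-symmetry observation shrinks the candidate list and Proposition~\ref{algprop} keeps each individual test small, so the search is feasible --- but this is where the genuine content lies; everything downstream of choosing $w$ is mechanical.
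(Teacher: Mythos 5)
Your proposal follows the paper's proof in essentially the same way: pick a minimal-length representative $w$ of ${\it Cus}_5^{\rm F_4}$ whose signature $\alpha$ satisfies $a_1\le a_2$, $a_4\le a_3$, reduce the claim $E_{\alpha}\cap\phi^{-1}({\it Cus}_4^{\rm F_4})=\varnothing$ via Proposition \refeq{algprop} to the small set $I_4^{\alpha}I_1^{\alpha}(E_{\alpha_{23}}^{\rm cyc})$, and check that none of its elements lands in ${\it Cus}_4^{\rm F_4}$; your observations that the reflection characteristic polynomial cannot separate the two classes and that $\alpha$ must be non-symmetric under the diagram automorphism $\tau$ are both correct and consistent with the paper's choice. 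The paper supplies the explicit witness $w=1213213234$ with $\alpha=(3,3,3,1)$, for which the resulting twelve words are disposed of by hand rather than by a machine conjugacy test (nine begin with $1212$ and shorten via $\phi(1212)=\phi(21)$ to length below $l({\it Cus}_4^{\rm F_4})$, and the remaining three are recognized as lying in $\phi^{-1}({\it Cus}_5^{\rm F_4})$), which is the only step your plan leaves to computation.
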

		\begin{proof}
			Let $w=1213213234$ which is a representative of ${\it Cus}_5^{\rm F_4}$ listed in \cite[Appendix B]{GP}. Let $\alpha={\rm sig}(w)$, then $\alpha=(3,3,3,1)$ and ${\alpha}_{23}=(0,3,3,0)$. Select an arbitrary element from each cyclic class in $E_{{\alpha}_{23}}$ and obtain
			\begin{equation*}
				E_{\alpha_{23}}^{\rm cyc}=\{222333,223233,232233,232323\},
			\end{equation*}
			and then
			\begin{equation}
			\begin{aligned}\label{f4set}
				I_4^{\alpha}I_1^{\alpha}(E_{{\alpha}_{23}}^{\rm cyc})=\{&1212124333,1212123433,1212123343,1212431233,\\&1212312433,1212312343,1243121233,1231212433,\\&1231212343,1243123123,1231243123,1231231243\}.
			\end{aligned}
		    \end{equation}
			Note that $\phi(1212)=\phi(21)$ and the last three words in \eqref{f4set} are all in $\phi^{-1}({\it Cus}_5^{\rm F_4})$, so $I_4^{\alpha}I_1^{\alpha}(E_{{\alpha}_{23}}^{\rm cyc})\cap\phi^{-1}({\it Cus}_4^{\rm F_4})=\varnothing$. By Proposition \refeq{algprop}, we have $E_{\alpha}\cap\phi^{-1}({\it Cus}_4^{\rm F_4})=\varnothing$.
		\end{proof}
		For $W=W({\rm H_4})$ or $W({\rm E_8})$, similar conclusions are not as obvious as for $W({\rm F_4})$, so we perform calculations in MATLAB to solve them.
		\begin{alg}\label{algA}
			Given a Coxeter system $(W({\rm M}),S)$ where $M={\rm H_4}$ or ${\rm E_8}$,  two cuspidal classes ${\it Cus}_i^M$ and ${\it Cus}_j^M$ such that $l({\it Cus}_i^M)=l({\it Cus}_j^M)$, a word $w\in \phi^{-1}({\it Cus}_i^M)$ such that $|w|=l({\it Cus}_i^M)$ and let $\alpha={\rm sig}(w)$, check whether $E_{\alpha}\cap\phi^{-1}({\it Cus}_j^M)=\varnothing$.
			\begin{enumerate}
				\item [\textbf{A1.}]
				{\it Flag} $\leftarrow 0$. Calculate $p_{{\it Cus}_j^{\rm H_4}}(\lambda)$ (resp. $p_{{\it Cus}_j^{\rm E_8}}(\lambda)$).
				\item [\textbf{A2.}]
				Traverse $I_4^{\alpha}I_1^{\alpha}(E_{{\alpha}_{23}}^{\rm cyc})$ (resp. $I_{87}^{\alpha}I_7^{\alpha}I_5^{\alpha}I_2^{\alpha}I_{13}^{\alpha}I_3^{\alpha}(E_{{\alpha}_{46}}^{\rm cyc})$). For each $v\in I_4^{\alpha}I_1^{\alpha} \\ (E_{{\alpha}_{23}}^{\rm cyc})$ (resp. $I_{87}^{\alpha}I_7^{\alpha}I_5^{\alpha}I_2^{\alpha}I_{13}^{\alpha}I_3^{\alpha}(E_{{\alpha}_{46}}^{\rm cyc})$), calculate $p_{\phi(v)}(\lambda)$.
				\begin{enumerate}
					\item [\textbf{A2.1.}]
					If $p_{\phi(v)}(\lambda)=p_{{\it Cus}_j^{\rm H_3}}(\lambda)$ (resp. $p_{\phi(v)}(\lambda)=p_{{\it Cus}_j^{\rm E_8}}(\lambda)$), then {\it Flag} $\leftarrow 1$ and the traversal terminates.
				\end{enumerate}
				\item [\textbf{A3.}]
				Output the value of {\it Flag}.
			\end{enumerate}
		\end{alg}
		\begin{remark}
			By Proposition \refeq{pgprop} and \refeq{algprop}, $E_{{\rm sig}(w)}\cap\phi^{-1}({\it Cus}_j^M)=\varnothing$ if and only if running algorithm \refeq{algA} yields ${\it Flag}=0$.
		\end{remark}
		\begin{remark}\label{h4e8rmk}
			For $W({\rm H_4})$, let
			\begin{flalign*}
			&w=1212132121321234\ {\rm for}\ {\it Cus}_7^{\rm H_4},{\it Cus}_8^{\rm H_4}.&
		    \end{flalign*}
		    For $W({\rm E_8})$, let
		    \begin{flalign*}
		    &w=1231423454657658\ {\rm for}\ {\it Cus}_5^{\rm E_8},{\it Cus}_6^{\rm E_8};&\\
		    &w=1234234542345654765876\ {\rm for}\ {\it Cus}_9^{\rm E_8},{\it Cus}_{10}^{\rm E_8};&\\
		    &w=123142314542345654765876\ {\rm for}\ {\it Cus}_{11}^{\rm E_8},{\it Cus}_{12}^{\rm E_8};&\\
		    &w=12314231454231456542345678\ {\rm for}\ {\it Cus}_{13}^{\rm E_8},{\it Cus}_{14}^{\rm E_8};&\\
		    &w=12314231545231436542314354265431765423456878\ {\rm for}\ {\it Cus}_{22}^{\rm E_8},{\it Cus}_{21}^{\rm E_8};&\\
		    &w=1231423154523165456237654567238765456782345678\ {\rm for}\ {\it Cus}_{23}^{\rm E_8},{\it Cus}_{24}^{\rm E_8}.&
		    \end{flalign*}
		    By running Algorithm \refeq{algA}, we get ${\it Flag}=0$ in each case. Hence these $w$ satisfy $w\in\phi^{-1}({\it Cus}_i^M)$, $|w|=l({\it Cus}_i^M)$ and $E_{{\rm sig}(w)}\cap \phi^{-1}({\it Cus}_j^M)=\varnothing$.
		\end{remark}
		\begin{proof}[Proof of Theorem \refeq{cmp}]
			By Lemma \refeq{cuslemma} and Proposition \refeq{cusprop}, it suffices to show that for each two cuspidal classes ${\it Cus}_i^M$ and ${\it Cus}_j^M$ that $M={\rm H_4},{\rm F_4},\text{or}\ {\rm E_8}$ and $l({\it Cus}_i^M)=l({\it Cus}_j^M)$, there exists a word $w\in\phi^{-1}({\it Cus}_i^M)$ such that $|w|=l({\it Cus}_i^M)$ and $E_{{\rm sig}(w)}\cap \phi^{-1}({\it Cus}_j^M)=\varnothing$. By Proposition \refeq{f4prop} and Remark \refeq{h4e8rmk}, such $w$ exists for each case, so the theorem holds. 
		\end{proof}
		
		\section{The proof of Theorem \refeq{MT} and examples}
		\begin{proof}[Proof of Theorem \refeq{MT}]
	    	By Theorem \refeq{clsp1}, \refeq{clsp2} and \refeq{expp}, any finite irreducible Coxeter group has an {\rm ISS} with a lower triangular {\rm ISM}. Thus any finite Coxeter group has an {\rm ISS} with a lower triangular {\rm ISM} by Corollary \refeq{dpc}.
	    	\par
	    	Let $W$ be a finite Coxeter group, $\rho_1$ and $\rho_2$ are two finite dimensional complex linear representations of $W$. It is obvious that $\rho_1\cong\rho_2$ implies $d(S,\rho_1)=d(S,\rho_2)$. Conversely, by Proposition \refeq{keyprop}, $d(S,\rho_1)=d(S,\rho_2)$ implies $\rho_1\cong\rho_2$.
    	\end{proof}
		Suppose $(W,S)$ is a finite Coxeter system. Recall that ${\mathcal R}(W)$ is the set of all finite dimensional complex linear representations of $W$. Let
		\begin{equation*}
			\mathscr{D}(W,S)\coloneqq\{d(S,\rho):\rho\in{\mathcal R}(W)\}.
		\end{equation*}
		$\mathscr{D}(W,S)$ is a semigroup with respect to polynomials multiplication. Let $\widehat{\mathcal R}(W)$ denote the semigroup of equivalence classes of ${\mathcal R}(W)$, with the semigroup operation defined by the direct sum of representations. By Theorem \refeq{MT} and the equation
		\begin{equation*}
			d(S,\bigoplus_{i=1}^k\rho_i)=\prod_{i=1}^kd(S,\rho_i),
		\end{equation*}
		the following corollary holds.
		\begin{corollary}\label{MC}
			Let $(W,S)$ be a finite Coxeter system. Let $\rho_1,\ldots,\rho_r$ be a complete list of irreducible representations of $W$.
			\begin{enumerate}
				\item[{\rm(a)}]
				For $\rho\in\mathcal{R}(W)$, $\rho\cong\bigoplus_{i=1}^rk_i\rho_i$ if and only if $d(S,\rho)=\prod_{i=1}^{r}d^{k_i}(S,\rho_i)$.
				\item[{\rm(b)}]
			    There is a semigroup isomorphism
			    \begin{equation*}
			    	\widehat{\mathcal R}(W)\cong\mathscr{D}(W,S).
			    \end{equation*}
			    In particular, $\mathscr{D}(W,S)=\langle d(S,\rho_1),\ldots,d(S,\rho_r)\rangle$ is a free commutative semigroup.
		    \end{enumerate}
		\end{corollary}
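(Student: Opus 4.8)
\textit{Proof proposal.} The plan is to transport the free commutative semigroup structure of $\widehat{\mathcal R}(W)$ across the assignment $[\rho]\mapsto d(S,\rho)$, using Theorem~\ref{MT}(b) for injectivity and the displayed product formula $d\bigl(S,\bigoplus_i\rho_i\bigr)=\prod_i d(S,\rho_i)$ for multiplicativity.

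First I would recall that, since $W$ is finite, complete reducibility together with the uniqueness of the multiplicities shows that every $\rho\in\mathcal R(W)$ is isomorphic to $\bigoplus_{i=1}^r k_i\rho_i$ for uniquely determined $k_i\in\mathbb N$; equivalently, $\widehat{\mathcal R}(W)$ is the free commutative semigroup on the classes $[\rho_1],\dots,[\rho_r]$. Part (a) is then immediate: if $\rho\cong\bigoplus_i k_i\rho_i$ then iterating the product formula gives $d(S,\rho)=\prod_{i=1}^r d^{k_i}(S,\rho_i)$; conversely the same formula shows $\prod_{i=1}^r d^{k_i}(S,\rho_i)=d\bigl(S,\bigoplus_i k_i\rho_i\bigr)$, so the hypothesis reads $d(S,\rho)=d\bigl(S,\bigoplus_i k_i\rho_i\bigr)$, and Theorem~\ref{MT}(b) yields $\rho\cong\bigoplus_i k_i\rho_i$.

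For part (b) I would define $\Phi\colon\widehat{\mathcal R}(W)\to\mathscr D(W,S)$ by $\Phi([\rho])=d(S,\rho)$. It is well defined because isomorphic representations have equal characteristic polynomials; it is a semigroup homomorphism by the product formula $d(S,\rho\oplus\sigma)=d(S,\rho)\,d(S,\sigma)$; it is surjective by the very definition of $\mathscr D(W,S)$; and it is injective by Theorem~\ref{MT}(b). Hence $\Phi$ is a semigroup isomorphism, and since $\widehat{\mathcal R}(W)$ is free commutative on $[\rho_1],\dots,[\rho_r]$, the semigroup $\mathscr D(W,S)$ is free commutative on the generators $d(S,\rho_1),\dots,d(S,\rho_r)$; in particular $\mathscr D(W,S)=\langle d(S,\rho_1),\dots,d(S,\rho_r)\rangle$.

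There is essentially no obstacle here beyond Theorem~\ref{MT}: the corollary is a formal consequence of the bijection between representations and their characteristic polynomials together with the multiplicativity of $d(S,-)$ under direct sums. The only points needing a word of care are the semigroup/monoid bookkeeping (whether one allows the zero representation, corresponding to the empty product $1$) and the observation that \emph{freeness} of $\mathscr D(W,S)$ — the multiplicative independence of the $d(S,\rho_i)$ — is precisely the image under $\Phi$ of the uniqueness of decomposition into irreducibles.
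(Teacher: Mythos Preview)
Your argument is correct and matches the paper's own reasoning exactly: the corollary is recorded there as an immediate consequence of Theorem~\ref{MT}(b) together with the product formula $d(S,\bigoplus_i\rho_i)=\prod_i d(S,\rho_i)$, and you have simply spelled out the details of that deduction.
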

		\begin{remark}
			Let
			\begin{equation*}
				\tilde{d}(S,\rho)\coloneqq\det[I+x_1\rho(s_1)+\cdots+x_n\rho(s_n)]
			\end{equation*}
			and
			\begin{equation*}
				\widetilde{\mathscr{D}}(W,S)\coloneqq\{\tilde{d}(S,\rho):\rho\in{\mathcal R}(W)\}.
			\end{equation*}
			Replacing $d$ and $\mathscr{D}$ by $\tilde{d}$ and $\widetilde{\mathscr{D}}$, Theorem \refeq{MT} and Corollary \refeq{MC} still hold.
		\end{remark}
		\begin{example}
			For $W=W({\rm H_3})$, there are 10 conjugacy classes
			\begin{align*}
				&C(\varepsilon), C(3), C(23), C(13), C(12), C(123), C(2323),\\ &C(12323), C(123212323), C(123213232132323).
			\end{align*}
			where $C(w)\coloneqq\{g\phi(w)g^{-1}:g\in W\}$. Let
			\begin{align*}
				&\alpha_1=(0,0,0), \alpha_2=(0,0,1), \alpha_3=(0,1,1), \alpha_4=(1,0,1), \alpha_5=(1,1,0),\\ &\alpha_6=(1,1,1), \alpha_7=(0,2,2), \alpha_8=(1,2,2), \alpha_9=(2,4,3), \alpha_{10}=(3,6,6).
			\end{align*}
			By performing calculations in MATLAB, it follows that
			\begin{equation*}
				\begin{pmatrix}
					V_{\alpha_1}\\
					V_{\alpha_2}\\
					\vdots\\
					V_{\alpha_{10}}\\
				\end{pmatrix}      	
				=
				\begin{pmatrix}
					1&0&0&0&0&0&0&0&0&0\\
					0&1&0&0&0&0&0&0&0&0\\
					0&0&2&0&0&0&0&0&0&0\\
					0&0&0&2&0&0&0&0&0&0\\
					0&0&0&0&2&0&0&0&0&0\\
					0&0&0&0&0&6&0&0&0&0\\
					4&0&0&0&0&0&2&0&0&0\\
					0&20&0&0&0&0&0&10&0&0\\
					0&738&0&0&0&468&0&36&18&0\\
					0&194450&0&0&0&24690&0&192000&9240&40
				\end{pmatrix}.
			\end{equation*}
			Hence these signatures form an {\rm ISS} with a lower triangular {\rm ISM} of $W({\rm H_3})$.
		\end{example}
		\begin{example}
			For the symmetric group $S_n=W({\rm A_{n-1}})$, we display permutation $\pi\in S_n$ by cycle notation
			\begin{equation*}
				\pi=(i_1,i_2,\ldots,i_l)\cdots(i_m,i_{m+1},\ldots,i_n),\quad i_j\in[n].
			\end{equation*}
			Then Coxeter generating set $T=\{(1,2),\ldots,(n-1,n)\}$.
			\par
			A {\it partition} of $n$ is a sequence
			\begin{equation*}
				\mu=(p_1,\ldots,p_k)
			\end{equation*}
			where the $p_i$ are weakly decreasing and $\sum_{i=1}^kp_i=n$, written by $\mu\vdash n$. Suppose $\pi$ consists of $k$ cycles whose lengths are $q_1,\ldots,q_k$ respectively where $q_1\geq\cdots\geq q_k$, the {\it type} of $\pi$ is defined as $(q_1,\ldots,q_k)$, which is a partition of $n$. Let
			\begin{equation*}
				C^{\mu}\coloneqq\{\pi:{\it type}\ \pi=\mu\}.
			\end{equation*}
			Let $\mu_1,\ldots,\mu_r$ be all partition of $n$, then all the conjugacy classes of $S_n$ are $C^{\mu_1},\ldots,C^{\mu_r}$. For any $\mu_j=(p_{j1},\ldots,p_{jk_j})\vdash n$, signature
			\begin{equation*}
				\alpha(\mu_j)\coloneqq(\underbrace{1,\ldots,1}_{p_{j1}-1},0,\underbrace{1,\ldots,1}_{p_{j2}-1},0,\ldots,0,\underbrace{1,\ldots,1}_{p_{jk_j}-1}).
			\end{equation*}
			It follows that $E_{\alpha(\mu_j)}\subseteq\phi^{-1}(C^{\mu_j})$ and $|E_{\alpha(\mu_j)}|=(n-k_j)!$. Hence $\alpha(\mu_1),\ldots, \\ \alpha(\mu_r)$ is an {\rm ISS} with a lower triangular {\rm ISM}
			\begin{equation*}
				diag((n-k_1)!,\ldots,(n-k_r)!).
			\end{equation*}
			\par
			Each partition $\mu\vdash n$ corresponds to a irreducible representation $\rho^{\mu}$ named {\it Young's natural representation}, see \cite{S}. All the $\rho^{\mu}$ form a complete list of irreducible representations of $S_n$. The characters and the characteristic polynomials of Young's natural representations for $S_3,S_4$ are given in the following tables.
			
			\begin{table}[H]
				\renewcommand{\arraystretch}{1.3}
				\centering
				\caption{Characters and characteristic polynomials for $S_3$}
				\scalebox{1}{
					\begin{NiceTabular}{|m{0.08\textwidth}<{\centering}|m{0.08\textwidth}<{\centering}|m{0.07\textwidth}<{\centering}|m{0.06\textwidth}<{\centering}|m{0.25\textwidth}|}
						\hline
						\multirow{2}{*}{$\rho^{\mu}$} & \multicolumn{3}{c}{$\chi$} & \multirow{2}{*}{$d(T,\rho^{\mu})$}\\
						\cline{2-4}
						& $C^{(1,1,1)}$ & $C^{(2,1)}$ & $C^{(3)}$ &\\
						\hline
						$\rho^{(3)}$ & 1 & 1 & 1 & $x_0+x_1+x_2$\\
						\hline
						$\rho^{(2,1)}$ & 2 & 0 & -1 & $x_0^2-x_1^2-x_2^2+x_1x_2$\\
						\hline
						$\rho^{(1,1,1)}$ & 1 & -1 & 1 & $x_0-x_1-x_2$\\
						\hline
					\end{NiceTabular}
				}
			\end{table}
			\begin{table}[H]
				\renewcommand{\arraystretch}{1.4}
				\centering
				\caption{Characters and characteristic polynomials for $S_4$}
				\scalebox{1}{
					\begin{NiceTabular}{|m{0.035\textwidth}<{\centering}|m{0.035\textwidth}<{\centering}|m{0.035\textwidth}<{\centering}|m{0.035\textwidth}<{\centering}|m{0.035\textwidth}<{\centering}|m{0.035\textwidth}<{\centering}|m{0.55\textwidth}|}
						\hline
						\multirow{2}{*}{$\rho^{\mu}$} & \multicolumn{5}{c}{$\chi$} & \multirow{2}{*}{$d(T,\rho^{\mu})$}\\
						\cline{2-6}
						& $C^{\mu_5}$ & $C^{\mu_4}$ & $C^{\mu_3}$ & $C^{\mu_2}$ & $C^{\mu_1}$ &\\
						\hline
						$\rho^{\mu_1}$ & 1 & 1 & 1 & 1 & 1 & $x_0+x_1+x_2+x_3$\\
						\hline
						$\rho^{\mu_2}$ & 3 & 1 & -1 & 0 & -1 & $x_0^3-x_1^3-x_2^3-x_3^3+x_0^2x_1+x_0^2x_2+x_0^2x_3-x_0x_1^2+x_1^2x_3-x_0x_2^2-x_0x_3^2+x_1x_3^2+x_0x_1x_2+2x_0x_1x_3+x_0x_2x_3$\\
						\hline
						$\rho^{\mu_3}$ & 2 & 0 & 2 & -1 & 0 & $x_0^2-x_1^2-x_2^2-x_3^2+x_1x_2+x_2x_3-2x_1x_3$\\
						\hline
						$\rho^{\mu_4}$ & 3 & -1 & -1 & 0 & 1 & $x_0^3+x_1^3+x_2^3+x_3^3-x_0^2x_1-x_0^2x_2-x_0^2x_3-x_0x_1^2-x_1^2x_3-x_0x_2^2-x_0x_3^2-x_1x_3^2+x_0x_1x_2+2x_0x_1x_3+x_0x_2x_3$\\
						\hline
						$\rho^{\mu_5}$ & 1 & -1 & 1 & 1 & -1 & $x_0-x_1-x_2-x_3$\\
						\hline
						\multicolumn{7}{l}{$\mu_1=(4)$, $\mu_2=(3,1)$, $\mu_3=(2,2)$, $\mu_4=(2,1,1)$, $\mu_5=(1,1,1,1)$.}\\
						\hline
					\end{NiceTabular}
				}
			\end{table}
			As an application of Theorem \refeq{MT}, the characteristic polynomials of representations can help us calculate the decomposition of the restricted representation $\rho\downarrow_{S_m}^{S_n}$ for $m<n$. Let $\rho\in\mathcal{R}(S_n)$ and $T_n$ be the Coxeter generating set of $S_n$. It follows that
			\begin{equation}\label{snsmeq}
				d(T_m,\rho\big\downarrow_{S_m}^{S_n})=d(T_n,\rho)(x_0,x_1,\ldots,x_{m-1},0,\ldots,0).
			\end{equation}
			So the decomposition of $\rho\downarrow_{S_m}^{S_n}$ corresponds to the decomposition of the right side of \eqref{snsmeq}. For instance,
			\begin{align*}
				&d(T_3,\rho^{(3,1)}\big\downarrow_{S_3}^{S_4})\\
				=&d(T_4,\rho^{(3,1)})(x_0,x_1,x_2,0)\\
				=&x_0^3-x_1^3-x_2^3+x_0^2x_1+x_0^2x_2-x_0x_1^2-x_0x_2^2+x_0x_1x_2\\
				=&(x_0+x_1+x_2)(x_0^2-x_1^2-x_2^2+x_1x_2)\\
				=&d(T_3,\rho^{(3)})d(T_3,\rho^{(2,1)}).
			\end{align*}
			So we have $\rho^{(3,1)}\downarrow_{S_3}^{S_4}\cong\rho^{(3)}\oplus\rho^{(2,1)}$, which can also be obtained by {\it Branching Rule}, see \cite{S}.
		\end{example}
			
		\begin{remark}
			In this paper, by studying the existence of the independent signature sequence, we generalize the main result of \cite{CST} to any finite Coxeter groups. A semigroup isomorphism between representations and their characteristic polynomials for finite Coxeter groups is established. We conjecture such sequence may exist for a larger category of finitely generated groups so that the result can be further generalized. On the other hand, there are many interesting topics about these polynomials, such as whether the characteristic polynomial for an irreducible representation is irreducible, how to calculate the characters from the polynomials directly. Therefore, we need more efforts to work on these topics.
		\end{remark}

		Shoumin Liu\\
		Email: s.liu@sdu.edu.cn\\
		School of Mathematics, Shandong University\\
		Shanda Nanlu 27, Jinan, \\
		Shandong Province, China\\
		Postcode: 250100\\
		Yuxiang Wang\\
		Email: quince773@163.com\\
		School of Mathematics, Shandong University\\
		Shanda Nanlu 27, Jinan, \\
		Shandong Province, China\\
		Postcode: 250100

\end{document}